\numberwithin{equation}{section}
\newtheorem{theorem}{Theorem}
\newtheorem{proposition}{Proposition}[section]
\newtheorem{lemma}[proposition]{Lemma}
\newtheorem{corollary}[proposition]{Corollary}
\theoremstyle{remark}
\newtheorem{remark}[proposition]{Remark}
\theoremstyle{definition}
\newtheorem{definition}{Definition}
\newtheorem{assumption}{Assumption}
\numberwithin{equation}{section}
\newcommand{\R}{\mathbb{R}}
\newcommand{\B}{\mathfrak{B}}
\newcommand{\N}{\mathbb{N}}
\newcommand{\X}{\mathcal{X}}
\newcommand{\Y}{\mathcal{Y}}
\DeclareMathOperator{\E}{\mathbb{E}}
\newcommand{\eps}{\varepsilon}
\renewcommand{\epsilon}{\varepsilon}
\renewcommand{\rho}{\varrho}
\newcommand{\crho}{C_\rho}
\DeclareMathOperator*{\esssup}{ess\,sup}
\DeclareMathOperator*{\essinf}{ess\,inf}
\DeclareMathOperator*{\argmin}{arg\,min}
\DeclareMathOperator*{\argmax}{arg\,max}
\DeclareMathOperator{\dist}{dist}
\DeclareMathOperator{\sdist}{sdist}
\newcommand{\norm}[1]{\left\|#1\right\|}
\newcommand{\abs}[1]{\left|#1\right|}
\newcommand{\st}{\,:\,}
\newcommand{\de}{\,\mathrm{d}}
\newcommand{\one}{\mathbf{1}}
\DeclareMathOperator{\supp}{supp}
\newcommand{\tube}{\Omega'}
\newcommand{\comment}[1]{}
\numberwithin{equation}{section}
\newcommand{\closure}[1]{\overline{#1}}
\newcommand{\grad}{\nabla}
\DeclareMathOperator{\tmpdiv}{div}
\renewcommand{\div}{\tmpdiv}
\renewcommand{\P}{\mathcal{P}}
\renewcommand{\L}{\mathcal{L}}
\renewcommand{\H}{\mathcal{H}}
\DeclareMathOperator{\Lip}{Lip}
\DeclareMathOperator{\Per}{Per}
\DeclareMathOperator{\TV}{TV}
\def\Xint#1{\mathchoice
{\XXint\displaystyle\textstyle{#1}}%
{\XXint\textstyle\scriptstyle{#1}}%
{\XXint\scriptstyle\scriptscriptstyle{#1}}%
{\XXint\scriptscriptstyle\scriptscriptstyle{#1}}%
\!\int}
\def\XXint#1#2#3{{\setbox0=\hbox{$#1{#2#3}{\int}$ }
\vcenter{\hbox{$#2#3$ }}\kern-.6\wd0}}
\def\intbar{\Xint-}
\definecolor{myblue}{rgb}{0.678 0.847 0.902}
\newcommand*\mybluebox[1]{%
\colorbox{myblue}{\hspace{1em}#1\hspace{1em}}}
\begin{document} 

\title{
A mean curvature flow arising in adversarial training}
\author{Leon Bungert\thanks{Institute of Mathematics \& Center for Artificial Intelligence and Data Science (CAIDAS), University of Würzburg, Emil-Fischer-Str. 40, 97074 Würzburg, Germany. Email: \href{mailto:leon.bungert@uni-wuerzburg.de}{leon.bungert@uni-wuerzburg.de}
} 
\and 
Tim Laux\thanks{Faculty of Mathematics, University of Regensburg, Universit{\"a}sstra{\ss}e 31, 93053 Regensburg, Germany. Email: \href{mailto:tim.laux@ur.de}
{tim.laux@ur.de}}
\and 
Kerrek Stinson
\thanks{Hausdorff Center for Mathematics, University of Bonn, Endenicher Allee 62, 53115 Bonn, Germany. Email: 
\href{mailto:kerrek.stinson@hcm.uni-bonn.de}{kerrek.stinson@hcm.uni-bonn.de}}
}
\maketitle

\begin{abstract}
    We connect adversarial training for binary classification to a geometric evolution equation for the decision boundary. 
    Relying on a perspective that recasts adversarial training as a regularization problem, we introduce a modified training scheme that constitutes a minimizing movements scheme for a nonlocal perimeter functional. 
    We prove that the scheme is monotone and consistent as the adversarial budget vanishes and the perimeter localizes, and as a consequence we rigorously show that the scheme approximates a weighted mean curvature flow.
    This highlights that the efficacy of adversarial training may be due to locally minimizing the length of the decision boundary. 
    In our analysis, we introduce a variety of tools for working with the subdifferential of a supremal-type nonlocal total variation and its regularity properties.
    \\
    \\
    \textbf{Keywords:} mean curvature flow, adversarial training, adversarial machine learning, minimizing movements, monotone and consistent schemes
    \\
    \textbf{AMS subject classifications:} 28A75, 35D40, 49J45, 53E10, 68T05  
\end{abstract}

\section{Introduction}
In the last decade, machine learning algorithms and in particular deep learning have experienced an unprecedented success story.
Such methods have proven their capabilities, inter alia, for the difficult tasks of image classification and generation. 
Most recently, the advent of large language models is expected to have a strong impact on various aspects of society.

At the same time, the success of machine learning is accompanied by concerns about the reliability and safety of its methods.
Already more than ten years ago it was observed that neural networks for image classification are susceptible to adversarial attacks \cite{szegedy2013intriguing}, meaning that imperceptible or seemingly harmless perturbations of images can lead to severe misclassifications.
As a consequence, the deployment of such methods in situations that affect the integrity and safety of humans, e.g., for self-driving cars or medical image classification, is risky.

To mitigate these risks, the scientific community has been developing different approaches to robustify machine learning in the presence of potential adversaries. 
The most prominent of these approaches in the context of classification tasks is \emph{adversarial training} \cite{goodfellow2014explaining,
madry2017towards}, which is a robust optimization problem of the form
\begin{align}\label{eq:AT_general}
	\inf_{u\in\H} \E_{(x,y)\sim\mu}\left[\sup_{\tilde x \in B(x,\eps)}\ell(u(\tilde x),y)\right],
\end{align}
where we use the notation $\E_{z\sim\mu}[f(z)]:=\int f(z)\de\mu(z)$.
The ingredients of adversarial training are readily explained: 
The probability measure $\mu \in \P(\X\times\Y)$ models the distribution of given training pairs in the so-called feature space $\X$ and the label space $\Y$.
Here $\X$ is a metric space, and $\Y$ a set, e.g., $\Y=\{0,\dots,K-1\}$ describing $K\in\N$ classes.
In realistic situations, one uses an empirical distribution of the form $\mu = \frac{1}{M}\sum_{i=1}^M\delta_{(x_i,y_i)}$ where $(x_i,y_i)\in\X\times\Y$ for $i=1,\dots,M$. 
The optimization takes place in a so-called hypothesis class $\H$ which is nothing but a class of functions from $\X$ to $\Y$, e.g., linear functions, measurable functions, parametrized neural networks, etc.
We let $\ell:\Y\times\Y\to\R$ be a so-called loss function, which is often chosen as a power of a norm or $f$-divergence.
Finally, in essence, the optimal classifier $u$ should satisfy $u(\tilde x)\approx y$ for $\mu$-almost every $(x,y)\in\X\times\Y$ and all $\tilde x\in B(x,\eps)$.
Thereby, \labelcref{eq:AT_general} enforces robustness of the classification in $\eps$-balls around the data points, where $\eps>0$ is called the \emph{adversarial budget}.

Already in \cite{madry2017towards} it has been empirically observed that \labelcref{eq:AT_general} indeed allows one to compute neural networks that are significantly more robust than those trained with the standard approach (corresponding to $\eps=0$ in \labelcref{eq:AT_general}).
However, the mathematical understanding of adversarial training and related problems only began growing a few years ago:
One line of research connects \labelcref{eq:AT_general} with (multimarginal) optimal transport or distributionally robust optimization problems \cite{pydi2020adversarial,pydi2021many,trillos2022multimarginal} and uses tools from these disciplines to analyse adversarial training.
Existence of solutions to \labelcref{eq:AT_general} in the binary classification case where $\Y=\{0,1\}$, $\ell$ is the $0$-$1$ loss $\ell(\tilde y,y)=\abs{\tilde y-y}$, and $\H$ is a class of measurable functions was proved in \cite{awasthi2021existence,bungert2023geometry}.
In \cite{awasthi2021existence}, the authors consider closed balls $B(x,\eps)$ in \labelcref{eq:AT_general} and work with classifiers which are characteristic functions of universally measurable sets in $\R^N$.
In contrast, in \cite{bungert2023geometry} open balls are used and the classifiers are characteristic functions of Borel measurable subsets of a generic metric measure space.
The authors of \cite{bungert2023geometry} also proved that adversarial training for binary classification is equivalent to the following variational regularization problem:
\begin{align}\label{eq:var_reg}
    \inf_{A\in\B(\X)}
    \mathbb E_{(x,y)\sim\mu}\left[\abs{\one_A(x)-y}\right]
    +
    \eps\Per_\eps(A).
\end{align}
Here, $\Per_\eps$ is a non-local and data-dependent perimeter functional that regularizes the decision boundary $\partial A$ between the two classes.
A similar decomposition into a ``natural error'' and a ``boundary error'' was studied in \cite{zhang2019theoretically} and used to derive the TRADES algorithm, which essentially replaces the regularization parameter $\eps$ in \eqref{eq:var_reg} by $\tfrac{1}{\lambda}$ for $\lambda>0$.
Also for other notions of robustness which are weaker than adversarial robustness, geometric interpretations similar to \eqref{eq:var_reg} exists, see \cite[Section 4]{bungert2023geometry} or \cite{bungert2023begins}.
We also remark that generalizations of some of the results in \cite{awasthi2021existence,bungert2023geometry} to the case of multi-class classification can be found in \cite{trillos2023existence}.
Finally, an overview of recent mathematical developments in the field can be found in \cite{garcia2023analytical}.

The perspective in \labelcref{eq:var_reg} opens the door for the geometric analysis of adversarially robust classifiers. 
As a first step in this direction, in \cite{bungert2023geometry} it was shown that maximal and minimal minimizers of \labelcref{eq:var_reg} possess one-sided regularity properties, and that for $\X=\R^N$ there exists a solution with a boundary that is locally the graph of a $C^{1,1/3}$ function. 
Subsequently, it was shown in \cite{bungert2022gamma} that (even for discontinuous densities with bounded variation) the nonlocal perimeter $\Per_\eps$ Gamma-converges to a weighted local perimeter and, as a consequence, solutions of \labelcref{eq:var_reg} converge to perimeter-minimal solutions of \labelcref{eq:var_reg} with $\eps=0$. 
In \cite{trillos2022adversarial} it was shown that for sufficiently small $\eps>0$ adversarially robust classifiers evolve (as parametrized by $\eps$) according to a geometric flow, when smooth solutions starting from the Bayes classifier exist. 
Expansions used to derive this result show that, infinitesimally in $\eps$, this flow is a weighted mean curvature flow, which shows that adversarial training is connected to decreasing the length of the decision boundary.

The \textbf{main contribution} of the present paper is to make this connection with mean curvature flow rigorous in a general setting and to move beyond the short time regime of \cite{trillos2022adversarial}.
To achieve this, we will introduce a slight modification of the adversarial training problem \labelcref{eq:var_reg}. 
Intuitively, the proposed iterative scheme prepares for attacks by an adversary with \emph{total adversarial budget} $T>0$ and (instantaneous) adversarial budget $\epsilon > 0$, allowing the adversary to corrupt the data on scale $\epsilon$ and even to react to modified classifiers at most $T/\eps$ times. 
As we will see in \cref{sec:MMS} below, the scheme can be interpreted as a minimizing movements scheme for mean curvature flow, in the spirit of Almgren--Taylor--Wang \cite{almgren1993curvature}. 
To select unique solutions we consider a strongly convex Chambolle-type scheme \cite{chambolle2004algorithm} and prove that it is monotone and consistent with respect to a weighted mean curvature flow, thereby proving convergence of the scheme to smooth flows (\cref{thm:main}).

The \textbf{main challenge} and the reason why our results are not just straightforward extensions of existing ones is that the adversarial budget $\eps>0$ in \labelcref{eq:var_reg} acts both as a time step and as a non-locality parameter for the perimeter $\Per_\eps$. 
Hence, in order to prove consistency with mean curvature flow, we have to perform a careful analysis of the associated total variation functional and its subdifferential, showing that the latter is consistent with the $1$-Laplace operator for a suitable class of functions.

We would like to emphasize that adversarial training is not the only method in data science connected to mean curvature flow. 
In particular, in the field of graph-based learning the so-called Merriman--Bence--Osher (MBO) algorithm has been employed frequently for clustering data sets or solving semi-supervised learning problems, see, e.g.,~\cite{merkurjev2013mbo,merkurjev2018semi,van2014mean,calder2020poisson}.
For rigorous connections of such approaches to mean curvature flow we refer to \cite{laux2022large,laux2023large}.

\textbf{Organization of the paper.} 
The rest of the paper is organized as follows. 
In the next section we precisely introduce the proposed adversarial training scheme and state our main result---convergence of the method to weighted mean curvature flow. In \cref{sec:totalVarProp}, we deduce the needed properties for the nonlocal total variation and, in particular, study its subdifferential. 
Finally, in \cref{sec:convergence} we prove convergence of the adversarial training scheme by verifying that it is monotone and consistent with respect to weighted mean curvature flow.

\textbf{Notation.} For the reader's convenience, we collect notation used throughout the paper here. Typically, $\Omega\subset \R^N$ will be a bounded domain (i.e., a non-empty, open, and connected set).
We use $\L^N$ to denote the $N$-dimensional Lebesgue measure and $\B(\Omega)$ to denote the Borel measurable subsets of $\Omega$.
Furthermore, we use $\abs{\cdot}$ for the Euclidean norm of a vector in $\R^N$ and $\mathbb{1}$ for the $N\times N$ identity matrix. For a set $A \subset \R^N$, we let $\one_A$ be the characteristic function taking the value $1$ on $A$ and $0$ otherwise. For any set $\Omega\subset \R^N$, we define the inner parallel set of distance $a>0$ through
\begin{equation}\label{eqn:inner_parallel}
    \Omega_a : = \{x\in \Omega \st \dist(x,\R^N\setminus\Omega)>a\}.
\end{equation}
Finally, for $x\in \R^N$ and $\eps>0$, we denote open balls by 
$B(x,\eps) := \{y\in\R^N\st \abs{x-y}<\eps\}$.

\section{From adversarial training to mean curvature flow}

Let $\Omega\subset\R^N$ be a bounded domain, $\mu\in \P(\Omega\times \{0,1\})$ be a probability measure, and $\ell(\bar y, y) = 1_{\bar y\neq y}$ be the $0$-$1$ loss function.
We are interested in binary classifiers found via adversarial training \labelcref{eq:AT_general}, i.e., minimizers of the problem
\begin{align}\label{eq:AT}
	\inf_{A \in \B(\Omega)} \E_{(x,y)\sim\mu}\left[\sup_{\tilde x \in B(x,\eps)\cap\Omega}\ell(\one_A(\tilde x),y)\right].
\end{align}
We define the conditional distributions $\rho_i := \mu(\cdot\times\{i\})$ for $i=0,1$ and $\rho := \rho_0+\rho_1$.
We pose the following assumption which we shall use in the whole paper, without further reference.
\begin{assumption}[The densities]\label{ass:densities}
We assume that $\rho_0$ and $\rho_1$ (and hence also $\rho$) have densities with respect to the $N$-dimensional Lebesgue measure on $\Omega$ which are continuously differentiable functions, i.e., $\rho_i \in C^1(\Omega)$.
For notational convenience we shall identify $\rho$ and $\rho_i$ with their densities, meaning $\int f\de\rho_{(i)}=\int f\rho_{(i)}\de x$.
Furthermore, we assume that $c_\rho<\rho<c_\rho^{-1}$ in $\Omega$ for some constant $c_\rho>0$.
\end{assumption}

In this situation it follows from the general results in \cite{bungert2023geometry} that problem \labelcref{eq:AT} is equivalent to
\begin{align}\label{eq:AT_Per} 
	\inf_{A \in \B(\Omega)} 
	\iint_{\Omega\times\{0,1\}} 
	\abs{\one_A(x) - y} \de\mu(x,y)
	+
	\eps 
	\Per_\eps(A),
\end{align}
where the generalized perimeter functional $\Per_\eps$ is defined as
\begin{align*}
	\Per_\eps(A)
	:=
	\frac{1}{\eps}
	\left[
	\int_\Omega 
	\left(\esssup_{B(x,\eps)\cap\Omega}\one_A - \one_A(x)\right)
	\de\rho_0(x)
	+
	\int_\Omega 
	\left(\one_A(x)-\essinf_{B(x,\eps)\cap\Omega}\one_A\right)
	\de\rho_1(x)
	\right].
\end{align*}
Note that, in particular, the supremum in \labelcref{eq:AT} can be replaced by essential suprema and infima in~\labelcref{eq:AT_Per}.
Furthermore, it was proved in \cite{bungert2023geometry} that minimizers to both problems \labelcref{eq:AT,eq:AT_Per} exist and that the infimal values coincide.
Studying the limit of the problem with small adversarial budget, the first and third author showed in \cite{bungert2022gamma} that the perimeter functional $\Gamma$-converges as $\eps\to 0$ to a weighted but local perimeter.
In the current setting with smooth densities this local perimeter is given by $\int_{\partial^* A\cap\Omega} \rho \de \mathcal{H}^{N-1}$, where $\mathcal{H}^{N-1}$ is the Hausdorff (surface) measure and $\partial^*A$ is the measure-theoretic reduced boundary of $A$. 
Therefore, for small values of $\eps$ the problem \labelcref{eq:AT_Per} will effectively minimize the energy
$$\frac{1}{\eps}\iint_{\Omega\times\{0,1\}} 
	\abs{\one_A(x) - y} \de\mu(x,y)
	+ 
	\int_{\partial^* A\cap\Omega} \rho \de \mathcal{H}^{N-1},$$ 
which bears a strong resemblance to the Almgren--Taylor--Wang scheme introduced in \cite{almgren1993curvature} for the study of mean curvature flow, with $\eps>0$ acting as the time step size.
Consequently, the minimization problem \labelcref{eq:AT_Per} should roughly be approximated by a mean curvature flow.
As remarked in the introduction, similar conclusions were drawn in \cite{trillos2022adversarial} on short time horizons.

The natural initial condition for the mean curvature flow is any solution of the adversarial training problem \labelcref{eq:AT_Per} with $\eps=0$:
\begin{align}\label{eq:Bayes_problem}
	\inf_{A \in \B(\Omega)} 
	\iint_{\Omega\times\{0,1\}} 
	\abs{\one_A(x) - y} \de\mu(x,y).
\end{align}
Solutions are called Bayes classifiers and since we have
\begin{align*}
	\iint_{\Omega\times\{0,1\}} 
	\abs{\one_A(x) - y} \de\mu(x,y)
	=
	\int_\Omega 
	\one_A \de\rho_0
	+
	\int_\Omega
	1-\one_{A} \de\rho_1
	=
	-\int_\Omega 
	\one_A \de(\rho_1-\rho_0)
	+
	\rho_1(\Omega),
\end{align*}
problem \labelcref{eq:Bayes_problem} is solved by every set $A$ which is the positive part of a Hahn decomposition of the signed measure $\rho_1-\rho_0$. 
For continuous densities $\rho_0,\rho_1$ any set $A$ which is sandwiched as $\{\rho_1>\rho_0\}\subset A \subset \{\rho_1\geq\rho_0\}$ is a Bayes classifier.

\subsection{The minimizing movements scheme}
\label{sec:MMS}

Now we introduce an iterative adversarial training scheme starting from the Bayes classifier which is a slight modification of \labelcref{eq:AT_Per} and has a rigorous connection to mean curvature flow.
Precisely, we replace \labelcref{eq:AT_Per} by a minimizing movements scheme in the spirit of \cite{almgren1993curvature,luckhaus1995implicit}:
\begin{align}\label{eq:minimizing_movement}
	\begin{dcases}
	A_0 
	&\in 
	\argmin_{A \in \B(\Omega)} 
	\iint_{\Omega\times\{0,1\}} 
	\abs{\one_A(x) - y} \de\mu(x,y),\\
	A_{k+1}
	&\in 
	\argmin_{A \in \B(\Omega)} 
	\int_\Omega 
	\abs{\one_A - \one_{A_k}}
	\frac{\dist(\cdot,\partial A_k)}{\eps}
	\de\rho
	+
	\Per_\eps(A),
	\qquad 
	k\geq 0,
	\end{dcases}
\end{align}
where in this special case we ``overload'' the distance function and define $$\operatorname{dist}(\cdot,\partial A) : = \dist(\cdot,A^1) + \dist(\cdot,\Omega\setminus A^1),$$ with $A^1$ being the points in $A$ with Lebesgue density $1$, as the distance to the boundary of $A$ relative to $\Omega$. The representative set $A^1$ ensures that the distance function does not change when $A$ is modified by a Lebesgue null-set, and we further note that the function coincides with the distance to $\partial (A^1)\cap \Omega$ when $\Omega$ is convex.

We note that this departs from the original adversarial training problem derived in \labelcref{eq:AT_Per} by the inclusion of a distance function. 
At a technical level, this is essential to recover the correct surface velocity for the boundary of the regularized classifier. 
Furthermore, one can show as in \cite[Theorem 5.6]{chan2005aspects} that, if $A_0$ is a smooth set and $\eps>0$ is small, the scheme \labelcref{eq:minimizing_movement} \textit{without} the distance function would stagnate, i.e., $A_k=A_0$ for all $k\in\N$.
At the level of the application, we motivate this term in the following remark.
\begin{remark}[The distance function]
In the context of training a stable classifier the term $\tfrac{\dist(\cdot,\partial A_k)}{\eps}$ acts as an adaptive regularization parameter: 
For points far away from the decision boundary $\partial A_k$ of the previous classifier, the perimeter regularization is unimportant and the first term in \labelcref{eq:minimizing_movement} gets more weight. 
Close to the boundary, the opposite holds true.
If one just performs two iterations of \labelcref{eq:minimizing_movement}, the first $A_0$ equals a Bayes classifier and the second one $A_1$ a solution to adversarial training, where the class labels are distributed according to the Bayes classifier and weighted according to their distance to the respective other class.
Computing this distance function in practice can be done with several different methods, for instance with the fast marching algorithm \cite{sethian1996fast} or the heat flow \cite{crane2013geodesics} based on Varadhans formula \cite{varadhan1967behavior}.
In the high-dimensional settings that are characteristic for machine learning problems, such methods are expensive which is why one resorts to so-called fast minimum norm attacks \cite{pinto2021fast} which computes an approximation of the radius of the smallest ball around a data point which contains an adversarial attack. 
For binary classifiers as in \labelcref{eq:minimizing_movement} this is precisely the distance function to the decision boundary.
\end{remark}
As the solutions of \labelcref{eq:minimizing_movement} are not necessarily unique, we consider a selection procedure following Chambolle's approach in \cite{chambolle2004algorithm}.
To this end, let us introduce the signed distance function of a set $A$ relative to $\Omega$ as
\begin{align}\label{eqn:sdist}
	\sdist(\cdot,A) := \dist(\cdot,A^1) - \dist(\cdot,\Omega\setminus A^1),
\end{align}
where as before $A^1$ denotes the points in $A$ with Lebesgue density $1$.
Furthermore, we introduce the total variation of a measurable function $u:\Omega\to\R$ which is naturally associated with $\Per_\eps$:
\begin{align}\label{eq:TV}
	\TV_\eps(u)
	:=
	\frac{1}{\eps}
	\left[
	\int_\Omega 
	\left(\esssup_{B(x,\eps)\cap\Omega} u - u(x)\right)
	\de\rho_0(x)
	+
	\int_\Omega 
	\left(u(x)-\essinf_{B(x,\eps)\cap\Omega}u\right)
	\de\rho_1(x)
	\right].
\end{align}
By definition it holds $\Per_\eps(A)=\TV_\eps(\one_A)$ and furthermore the perimeter and the total variation are connected through a coarea formula, see \cite{bungert2023geometry} and \cref{lem:coarea} below.
The central object of study in this paper is the following modified adversarial training scheme
\begin{empheq}[box=\mybluebox]{align}\label{eq:selection}
	\begin{dcases}
	A_0 
	&\in 
	\argmin_{A \in \B(\Omega)} 
	\iint_{\Omega\times\{0,1\}} 
	\abs{\one_A(x) - y} \de\mu(x,y),\\
	w^\ast
	&:=
	\argmin_{w\in L^2(\Omega)} 
	\frac{1}{2\eps}
	\int_\Omega 
	\abs{w - \sdist(\cdot,A_k^c)}^2
	\de\rho
	+
	\TV_\eps(w),
	\qquad 
	k\geq 0,
	\\
	A_{k+1} 
	&:= 
	\{w^\ast > 0\},
	\qquad 
	k\geq 0.
	\end{dcases}
\end{empheq}
We will prove that \labelcref{eq:selection} constitutes a selection mechanism for \labelcref{eq:minimizing_movement}; that is the sequence of sets $(A_k)_{k\in \N_0}$ found via \labelcref{eq:selection} satisfies \labelcref{eq:minimizing_movement}. 
We note that, in contrast to the work of Chambolle \cite{chambolle2004algorithm}, who in our notation considered the scheme $A_{k+1}:=\{w^\ast \leq 0\}$ where $w^\ast:=\argmin_{w\in L^2(\Omega)}\frac{1}{2\eps}\int_\Omega\abs{w - \sdist(\cdot,A_k)}^2\de x+\TV(w)$ and $\TV$ is the standard total variation, we have to flip the sign of the signed distance function and pick the superlevel instead of sublevel set of the resulting minimizer $w^*$ in order for \labelcref{eq:selection} to select a solution of \labelcref{eq:minimizing_movement}. 
This is necessary since $\TV_\eps$ sees orientation, in the sense that ${\TV_\eps(-u)\neq\TV_\eps(u)}$, in contrast to the standard total variation, for which $\TV(-u)=\TV(u)$.

The objective of this paper is to show that, as the adversarial budget vanishes, meaning $\eps\to 0$, the sequence of sets given by \labelcref{eq:selection} converge to a time-parametrized curve $t\mapsto A(t)$ which is a solution of a weighted mean curvature flow equation with the following normal velocity (in the direction $\nu_{A(t)}$)
\begin{empheq}[box=\mybluebox]{align}\label{eq:normal_velocity}
	V(t) = -\frac{1}{\rho}\div\left(\rho \nu_{A(t)}\right)
    = H_{A(t)} - \nabla\log\rho\cdot\nu_{A(t)}
 \quad \text{ on }\partial A.
\end{empheq}
Here $\nu_{A(t)}$ is (a smooth unit-length extension of) the inner unit normal to $\partial A(t)$ and $H_{A(t)}:=-\div\nu_{A(t)}$ denotes the mean curvature of $\partial A(t)$. Note our orientation is such that $H_{A}>0$ if $A$ is a ball. 
The convergence to this mean curvature flow is the content of \cref{thm:main}. 
The mathematical challenges arising in this problem are mostly consequences of the nonlocal $\TV_\eps$ in \labelcref{eq:selection}:  
First, as the $\TV_\eps$ functional is neither local nor smooth, we will need to carefully study its subdifferential and consistency with the $1$-Laplace operator, i.e., the subdifferential of the classical total variation functional. Beyond this, we have not been able to show that minimizers $w^*$ from \labelcref{eq:selection} inherit the regularity of their data, e.g., $\Lip(w^\ast) \leq \Lip(\sdist(\cdot,A^c)) = 1$, an extremely convenient property to have at hand. 
Circumnavigating this obstacle, we instead prove that minimizers are ``almost'' Lipschitz by explicitly constructing sub- and supersolutions for conical data. 
Finally, in \labelcref{eq:selection}, the parameter $\eps$ (appearing in $\tfrac{1}{2\eps}$ \textbf{and} in $\Per_\eps$) effectively behaves as the time-step in the discretization of a time interval $(0,T)$ and as a non-locality parameter. 
Consequently, the non-locality and time-step are of the same magnitude, and we must ensure that this does not prevent localization of the minimizing movements scheme in the limit as $\eps \to 0$.

\subsection{Main result}
\label{sec:main}

The main consequence of our results is that if the initial Bayes classifier is smooth and compactly contained in $\Omega$, then a time parametrized version of the scheme $(A_k)_{k\in \N_0}$ given in \labelcref{eq:selection} converges to a solution of mean curvature flow with initial condition $A_0$. 
Precisely, we parametrize the sets $(A_k)_{k\in \N_0}$ in \labelcref{eq:selection} with a piecewise-constant curve $t \mapsto A_\eps(t)$ defined by
\begin{equation}\label{eqn:min_mov_param}
A_\eps(t) : = 
A_k \text{ for } t\in [k\eps, (k+1)\eps).
\end{equation}
With this at hand, we may state our result.

\begin{theorem}[Main theorem]\label{thm:main}
Let $\Omega\subset\R^N$ be a bounded and convex domain.
Suppose that in \labelcref{eq:selection} the Bayes classifier $A_0 \subset\subset \Omega$  has $C^2$-boundary and that $t\mapsto A(t)$ is a parameterized curve evolving by the weighted mean curvature flow with normal velocity \labelcref{eq:normal_velocity} up to the first singular time $T_*$, with initial condition $A_0$.

Then as $\eps \to 0$, the time parametrized curves $t \mapsto A_\eps(t)$ defined in \labelcref{eqn:min_mov_param}, coming from the adversarial training scheme \labelcref{eq:selection}, converge in $L^\infty_{\rm loc}([0,T_*);L^1(\Omega;\{0,1\}))$ and in the Hausdorff distance to the weighted mean curvature flow parametrized by $t\mapsto A(t)$.
\end{theorem}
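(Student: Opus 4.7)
The plan follows the classical monotone scheme paradigm of Barles--Souganidis, realized in the context of mean curvature flow by Chambolle. One defines the one-step map $T_\eps(A) := \{w^* > 0\}$ from \labelcref{eq:selection} and verifies \textbf{monotonicity} and \textbf{consistency} with the flow \labelcref{eq:normal_velocity}, then combines them with a barrier comparison to deduce convergence on $[0,T_*)$. Monotonicity follows relatively directly: if $A\subseteq B$ then $\sdist(\cdot,A^c)\leq\sdist(\cdot,B^c)$, and combining strong convexity of the fidelity term with a submodularity property of $\TV_\eps$ (inherited from the essential supremum/infimum structure in \labelcref{eq:TV}) via the standard pairwise comparison of $w^*_A$ and $w^*_B$ against their pointwise minimum and maximum, one obtains $w^*_A\leq w^*_B$ and hence $T_\eps(A)\subseteq T_\eps(B)$.

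The heart of the proof is consistency. Fix a smooth set $A\subset\subset\Omega$ with $C^2$-boundary and set $g:=\sdist(\cdot,A^c)$, so that $\abs{\nabla g}=1$ and $\Delta g=-H_A$ on $\partial A$. The Euler--Lagrange relation $\frac{1}{\eps}(w^*-g)\rho\in -\partial\TV_\eps(w^*)$, combined with the consistency of $\partial\TV_\eps$ at smooth functions with the weighted $1$-Laplace operator $-\frac{1}{\rho}\div(\rho\,\nabla\cdot/\abs{\nabla\cdot})$ established in \cref{sec:totalVarProp}, formally yields
\[
\frac{w^*-g}{\eps} \approx -\bigl(H_A-\nabla\log\rho\cdot\nu_A\bigr) = -V \quad\text{near } \partial A.
\]
Consequently the level set $\{w^*=0\}$ is displaced from $\partial A=\{g=0\}$ by $\approx \eps V$ in the inner-normal direction, which is precisely one Euler step of \labelcref{eq:normal_velocity}. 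To turn this into a rigorous one-sided statement, I would construct explicit sub- and super-solutions built from conical data---the ``almost-Lipschitz'' substitute for the missing bound $\Lip(w^*)\leq\Lip(g)$ highlighted in \cref{sec:MMS}---and use them to sandwich the true minimizer, arriving at a one-sided Hausdorff bound of the form $\partial T_\eps(A)\subset\{\dist(\cdot,\partial A^\eps)\leq o(\eps)\}$, where $A^\eps$ is $A$ evolved one time step by \labelcref{eq:normal_velocity}.

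Monotonicity and consistency are then combined in the standard inner/outer barrier argument. Perturb the smooth flow $A(t)$ by $\pm\delta$ along the normal to obtain strict super- and sub-flows $A^\pm_\delta(t)$; the strictness margin $\delta$ dominates the $o(\eps)$ per-step consistency error, and iterating $T_\eps$ while applying monotonicity at each step yields $A^-_\delta(t)\subseteq A_\eps(t)\subseteq A^+_\delta(t)$ for every $t<T_*$. Sending $\eps\to 0$ and then $\delta\to 0$ gives convergence in Hausdorff distance, and $L^\infty_{\rm loc}([0,T_*);L^1(\Omega;\{0,1\}))$-convergence of $\one_{A_\eps(t)}$ follows by dominated convergence. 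The main obstacle is the consistency step: the parameter $\eps$ plays the dual role of time step \emph{and} non-locality scale, so the usual trick of expanding the operator at fixed scale in the time step is unavailable; and the lack of Lipschitz regularity of $w^*$ forces the barrier construction via conical test functions rather than a direct signed-distance Ansatz. Executing these constructions rigorously is what demands the detailed subdifferential analysis of $\TV_\eps$ carried out in \cref{sec:totalVarProp}.
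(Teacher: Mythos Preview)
Your proposal is correct and follows essentially the same route as the paper: monotonicity via submodularity of $\TV_\eps$ and a comparison principle, consistency via the subdifferential analysis of \cref{sec:totalVarProp} combined with conical barriers (the paper's \cref{lem:cone}) to substitute for the missing Lipschitz bound, and the standard sandwich argument with strict sub-/superflows followed by $\delta\to 0$. One small clarification worth making precise: the barriers $A^\pm_\delta(t)$ should be constructed as solutions of the \emph{forced} flow $V = H_{A}-\nabla\log\rho\cdot\nu_A \mp \delta$ (cf.\ the paper's \labelcref{eqn:forcedFlow}) rather than as static $\pm\delta$-normal displacements of $A(t)$, since only the former are strict sub-/superflows in the sense of \cref{def:superflow}.
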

A couple of remarks on this theorem are in order.
\begin{remark}[Smooth Bayes classifiers]
    Note that existence of Bayes classifiers with $C^2$-bound\-ary is guaranteed, e.g., if the levelset $\{\rho_0=\rho_1\}$ is a $C^2$-hypersurface in $\R^N$. 
    This follows from the implicit function theorem if $\rho_0,\rho_1$ are $C^2$-regular in a neighborhood of $\{\rho_0=\rho_1\}$ and if $\nabla\rho_1-\nabla\rho_0\neq 0$ on $\{\rho_0=\rho_1\}$.
\end{remark}
\begin{remark}[Convexity]
    Convexity of the domain $\Omega$ is exclusively used in \cref{lem:cone}, a certain comparison principle for \labelcref{eq:selection} when $A_0$ is a ball. 
    In particular, we believe that the assumption could be avoided with some more work.
\end{remark}
\begin{remark}[The first singular time]
    We also remark that the first singular time in \cref{thm:main} could, for instance, be due to vanishing bubbles, pinch-off, or intersection with $\partial \Omega$.
\end{remark}
\begin{remark}[Generalized solutions of mean curvature flow]
\cref{thm:main} is a direct consequence of \cref{thm:monotone and consistent} further down which states \emph{monotonicity} of the scheme \labelcref{eq:selection} and \emph{consistency} with smooth sub- and superflows (see \cref{def:superflow} below). 
In \cite[Theorem 4]{chambolle2004algorithm} for the Almgren--Taylor--Wang scheme, sub- and superflows are used to define generalized flows that start from more generic initial data so long as the viscosity solution is unique (see also \cite{bellettini1997minimal,chambolle2007approximation,novaga2008implicit} for more general results of that kind). 
A key element for this to work is that the scheme selects a sequence of open (or closed) sets.
However, since we do not have a proof for (Lipschitz) continuity of $w^*$ in \labelcref{eq:selection} (see \cref{cor:almost-Lip} and the discussion preceding it), the iterates $\{w^*>0\}$ of our scheme are in general neither open nor closed.
Alternatively, density estimates can be used to construct open (or closed) selections as in \cite{chambolle2023minimizing}, but in our case those are not available because of the non-locality of~$\TV_\eps$.
As a consequence, it is not clear how to use \eqref{eq:selection} to construct viscosity solutions of the weighted mean curvature flow. 
\end{remark}
\begin{remark}[Boundary conditions]
    Herein, we do not address boundary conditions, but we note that---following the numerical experiments in \cite{etoGiga1}---incorporation of Neumann boundary conditions for the Almgren--Taylor--Wang scheme has only recently been rigorously addressed in \cite{etoGiga2}. 
    Their techniques appear highly PDE dependent, and it is not clear a similar approach can be used in our nonlocal setting.
\end{remark}

\section{Properties of the total variation}\label{sec:totalVarProp}

First, we recall that the total variation admits a coarea formula with respect to the nonlocal perimeter~$\Per_\eps$.
\begin{lemma}[Coarea formula \cite{bungert2023geometry}]\label{lem:coarea}
    For every $u\in L^1(\Omega)$ it holds that
    \begin{align}
        \TV_\eps(u) = \int_\R \Per_\eps(\{u>t\})\de t,
    \end{align}
    where both sides can take the value $+\infty$.
\end{lemma}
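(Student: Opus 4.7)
The plan is to reduce the identity to a pointwise layer-cake decomposition and then swap integrals by Fubini--Tonelli. The key observation is that essential suprema (resp.\ infima) over balls commute with passing to indicators of superlevel sets: for every $x\in\Omega$ and $t\in\R$,
\begin{equation*}
    \esssup_{B(x,\eps)\cap\Omega}\one_{\{u>t\}} = \one_{\{\esssup_{B(x,\eps)\cap\Omega} u \,>\, t\}},
\end{equation*}
because both sides equal $1$ if and only if the set $\{u>t\}\cap B(x,\eps)\cap\Omega$ has positive Lebesgue measure. The analogous identity for essinf holds for every $t$ except possibly $t=\essinf_{B(x,\eps)\cap\Omega} u$, which is a single value of $t$ for each fixed $x$ and hence negligible in the $t$-integration to follow.

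Next, by Lebesgue's differentiation theorem applied to the superlevel sets $\{u>q\}$ for $q\in\mathbb{Q}$, one has $\essinf_{B(x,\eps)\cap\Omega} u \leq u(x) \leq \esssup_{B(x,\eps)\cap\Omega} u$ for $\L^N$-a.e.\ $x\in\Omega$, so the two integrands appearing in the definition of $\TV_\eps(u)$ are non-negative a.e. I then apply the scalar layer-cake identity $a-b=\int_\R(\one_{\{a>t\}}-\one_{\{b>t\}})\de t$ pointwise---with $a=\esssup_{B(x,\eps)\cap\Omega} u$, $b=u(x)$ in the first term of $\TV_\eps$ and $a=u(x)$, $b=\essinf_{B(x,\eps)\cap\Omega} u$ in the second---and use the two commutation identities above to express each integrand as a $t$-integral of the boundary-type integrand appearing in $\eps\Per_\eps(\{u>t\})$.

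Finally, Fubini--Tonelli allows swapping the $x$- and $t$-integrations since the resulting integrands are measurable and non-negative; dividing by $\eps$ and assembling the two pieces yields
\begin{equation*}
    \TV_\eps(u) = \int_\R \Per_\eps(\{u>t\})\de t,
\end{equation*}
with both sides simultaneously finite or $+\infty$. I do not expect a serious obstacle here: the only mildly delicate point is the null-set failure of the essinf commutation identity, which is absorbed by the $t$-integration, and the a.e.\ bounds $\essinf \leq u(x) \leq \esssup$ that legitimize the pointwise layer-cake.
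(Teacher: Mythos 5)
Your proof is correct: the exact commutation identity $\esssup_{B(x,\eps)\cap\Omega}\one_{\{u>t\}}=\one_{\{\esssup_{B(x,\eps)\cap\Omega}u>t\}}$, its $\essinf$ analogue failing only at the single value $t=\essinf_{B(x,\eps)\cap\Omega}u$ (harmless under the $t$-integral), the a.e.\ bounds $\essinf_{B(x,\eps)\cap\Omega}u\le u(x)\le\esssup_{B(x,\eps)\cap\Omega}u$ from the Lebesgue density theorem, the scalar layer-cake identity, and Tonelli are precisely the ingredients of the standard argument. The paper does not reprove this lemma but cites \cite{bungert2023geometry} (where it is stated for $\{u\ge t\}$) together with a remark from \cite{bungert2022gamma} to pass to strict superlevel sets; your direct derivation for $\{u>t\}$ follows the same layer-cake/Fubini route and is valid in the present setting because $\rho_0,\rho_1\ll\L^N$ by \cref{ass:densities}.
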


We remark that the above lemma is stated in \cite[Proposition 3.13]{bungert2023geometry} using the sets $\{u\geq t\}$.
However, as noted in \cite[Section 4.1]{bungert2022gamma} for sufficiently regular densities (in particular, continuous densities) the statement holds for strict super-level sets, as well. 

Next we study some basic properties of the subdifferential of the total variation, regarded as a convex functional on $L^2(\Omega)$ with the standard inner product.
We first record the following lemma which will be familiar to readers used to working with $1$-homogeneous functionals.
\begin{lemma}\label{lem:subdiff}
    Let $\X$ be a Banach space with dual pairing $\langle\cdot,\cdot\rangle:\X^*\times\X\to\R$, and let $J : \X\to[0,\infty]$ be a proper functional with $J(cu)=cJ(u)$ for all $u\in\operatorname{dom} J$ and $c \geq 0$.
    Then the subdifferential of $J$ at $u\in\operatorname{dom}J$, defined as 
    \begin{align}\label{eq:def_subdiff}
        \partial J(u) := 
        \left\lbrace
        p\in\X^*
        \st 
        J(u)
        +
        \langle p, v-u\rangle
        \leq 
        J(v)\; \text{ for all } \; v\in \X
    \right\rbrace,
    \end{align}
    has the characterization
    \begin{align}\label{eq:subdiff_char}
        \partial J(u) 
        =
        \left\lbrace
        p\in \X^* \st 
        \langle p,u\rangle = J(u),\;
        \langle p,v\rangle \leq J(v)\; \text{ for all } \; v\in \X
        \right\rbrace.
    \end{align}
\end{lemma}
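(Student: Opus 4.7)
The plan is to establish the two inclusions defining \labelcref{eq:subdiff_char} by testing the defining inequality \labelcref{eq:def_subdiff} at carefully chosen elements and exploiting the $1$-homogeneity assumption. A preliminary observation I would record is that applying the homogeneity relation with $c=0$ to any $u \in \operatorname{dom} J$ yields $J(0) = 0$, which is needed for one of the test points below.

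For the forward inclusion, starting from $p \in \partial J(u)$, the plan is to test \labelcref{eq:def_subdiff} first at $v = 2u$. Using $J(2u) = 2J(u)$, this gives $\langle p, u\rangle \leq J(u)$. Next, testing at $v = 0$ and using $J(0) = 0$ gives $\langle p, u\rangle \geq J(u)$, so that combined with the previous bound one obtains the equality $\langle p, u\rangle = J(u)$. To obtain the pointwise bound $\langle p, v\rangle \leq J(v)$ for arbitrary $v \in X$, I would test \labelcref{eq:def_subdiff} at $\lambda v$ for $\lambda > 0$, use $J(\lambda v) = \lambda J(v)$ when $v \in \operatorname{dom} J$ (the bound being vacuous otherwise), substitute the equality just established to cancel the $J(u)$ and $-\langle p,u\rangle$ terms, and then divide through by $\lambda$.

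For the reverse inclusion, if $p$ satisfies both conditions on the right-hand side of \labelcref{eq:subdiff_char}, then a direct substitution gives $J(u) + \langle p, v-u\rangle = \langle p, v\rangle + (J(u) - \langle p, u\rangle) = \langle p, v\rangle \leq J(v)$ for every $v \in X$, which is exactly the defining inequality for $p \in \partial J(u)$.

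I do not expect a genuine obstacle: this is a classical fact about sublinear functionals, and the only care needed is to verify that the scalars $c=0$ and $c=2$ are permitted by the hypothesis $c \geq 0$, and that the test elements $0$ and $2u$ indeed lie in $\operatorname{dom} J$ (the first via $J(0)=0$, the second because $J(2u)=2J(u)<\infty$ whenever $u \in \operatorname{dom} J$).
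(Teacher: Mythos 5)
Your proof is correct and follows essentially the same route as the paper: test the subdifferential inequality at $v=2u$ and $v=0$ to get $\langle p,u\rangle = J(u)$, then cancel to obtain $\langle p,v\rangle\leq J(v)$, with the reverse inclusion being a direct substitution. Your extra scaling by $\lambda$ in the last step is unnecessary (taking $\lambda=1$ already works once $\langle p,u\rangle=J(u)$ is known), but the added care in verifying $J(0)=0$ and $2u\in\operatorname{dom}J$ is harmless and slightly more explicit than the paper.
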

\begin{remark}
    Elements $p\in\partial J(u)$ are called subgradients of $J$ at $u$.
\end{remark}
\begin{proof}
The inclusion ``$\supset$'' in \labelcref{eq:subdiff_char} is trivial.
For the converse inclusion, we let $p\in\partial J(u)$ and choose $v=2u$ in \labelcref{eq:def_subdiff}, yielding $J(u) + \langle p,u\rangle \leq J(2u) = 2J(u)$ and hence $\langle p,u\rangle\leq J(u)$.
Choosing $v=0$ and using $J(0)=0$ yields the converse inequality $J(u) \leq \langle p,u\rangle$.
Hence, it holds $\langle p,u\rangle = J(u)$ which immediately also implies $\langle p,v\rangle \leq J(v)$ for all $v\in \X$, using again \labelcref{eq:def_subdiff}.
This concludes the proof of ``$\subset$''.
\end{proof}

It will be important to understand properties of the subdifferential of the total variation $\TV_\eps$, regarded as an extended-valued functional on $L^2(\Omega)$. 
According to \labelcref{eq:def_subdiff} its subdifferential is given by
\begin{align}\label{eq:subdiff_TV}
    \partial\TV_\eps(u) = \left\{p \in L^2(\Omega)\st \TV_\eps(u) + \int_\Omega p(v-u)\de x \leq \TV_\eps(v)\quad\forall v \in L^2(\Omega)\right\}.
\end{align}
Using the characterization \labelcref{eq:subdiff_char} of $\partial\TV_\eps(u)$ with $v \equiv \pm 1$, we note that for $p \in \partial\TV_\eps(u)$ one has $\int_\Omega p \de x= 0$.
Characterizing the subdifferential in full generality beyond \labelcref{eq:subdiff_char} is both not necessary for our purposes and beyond the scope of this paper, for which it suffices to restrict ourselves to suitably nice functions $u$ and a smaller class of test functions than $v \in L^2(\Omega)$.
For this we start with a few informal considerations. 
Since $\TV_\eps(u)$ is positively homogeneous, according to \labelcref{eq:subdiff_char} it suffices to find $p$ such that $\int_\Omega p v \de x \leq \TV_\eps(v)$ for all test functions $v$ with equality for $v=u$ to characterize the subdifferential.
If we assumed that $u$ was sufficiently nice such that $\esssup_{B(x,\eps)\cap\Omega}u$ and $\essinf_{B(x,\eps)\cap\Omega}$ were attained at unique points $\Gamma_\eps(x)$ and $\gamma_\eps(x)$, respectively, we could use a change of variables to obtain
\begin{align*}
    \int_\Omega v \de(\Gamma_\eps)_\sharp\rho_0 \leq \int_\Omega \esssup_{B(\cdot,\eps)\cap\Omega}v\de\rho_0
    \qquad
    \text{and}
    \qquad
    \int_\Omega v \de(\gamma_\eps)_\sharp\rho_1 \geq \int_\Omega \essinf_{B(\cdot,\eps)\cap\Omega}v\de\rho_1
\end{align*}
with equality for $v=u$.
Consequently and not being concerned about regularity, the function 
\begin{align*}
    p := \frac{(\Gamma_\eps)_\sharp\rho_0-\rho_0}{\eps} + \frac{\rho_1-(\gamma_\eps)_\sharp\rho_1}{\eps}    
\end{align*}
would be an element of $\partial\TV_\eps(u)$.
For this to be rigorous, we would have to make sure that the maps $\Gamma_\eps(x) := \argmax_{B(x,\eps)\cap\Omega}u$ and $\gamma_\eps(x) := \argmin_{B(x,\eps)\cap\Omega}u$ are well-defined and the pushforwards $(\Gamma_\eps)_\sharp\rho_0$ and $(\gamma_\eps)_\sharp\rho_1$ have densities in $L^2(\Omega)$.
Towards this goal, we first of all work with sufficiently regular functions $u$ with non-vanishing gradients, and also with a restricted class of test functions $v$ for which we can prove the subdifferential inequality in \labelcref{eq:subdiff_TV} holds.

\begin{proposition}\label{prop:density_of_pushforward}
    Let $u \in C^2(\closure\Omega)$ such that $\abs{\nabla u}\geq c$ in $\closure\Omega$ for a constant $c>0$, and let $\Lambda_{\max}$ denote the largest eigenvalue of the Hessian of $u$ over $\Omega$.
    If $0<\eps < c/\Lambda_{\rm max}$ is small enough, then 
    \begin{itemize}
        \item for every $x\in\Omega_\eps$ the maps 
        \begin{align*}
            \Gamma_\eps(x) := \argmax_{\overline{B(x,\eps)\cap\Omega}} u
            \qquad
            \text{and}
            \qquad
            \gamma_\eps(x) := \argmin_{\overline{B(x,\eps)\cap\Omega}} u
        \end{align*}
        are singletons;
        \item for every $y\in\Omega_{2\eps}$ the densities of the pushforward measures $(\Gamma_\eps)_\sharp\rho_0$ and $(\gamma_\eps)_\sharp\rho_1$ with respect to the Lebesgue measure $\L^N$ are given by \begin{subequations}\label{eq:pushforward_densities}
        \begin{align}
        \frac{\de(\Gamma_\eps)_\sharp\rho_0}
        {\de \L^N}(y)
        &= 
        \frac{\de\rho_0}{\de\L^N}\left(y - \eps\frac{\nabla u(y)}{\abs{\nabla u(y)}}\right)
        \abs{\det\left(\nabla\left(y - \eps\frac{\nabla u(y)}{\abs{\nabla u(y)}}\right)\right)},
        \\
        \frac{\de(\gamma_\eps)_\sharp\rho_1(y)}{\de \L^N}(y)
        &= 
        \frac{\de\rho_1}{\de\L^N}\left(y + \eps\frac{\nabla u(y)}{\abs{\nabla u(y)}}\right)
        \abs{\det\left(\nabla\left(y + \eps\frac{\nabla u(y)}{\abs{\nabla u(y)}}\right)\right)},
    \end{align}
    where by \cref{ass:densities} it holds $\frac{\de\rho_i}{\de\L^N}=\rho_i$ for $i\in\{0,1\}$.
    \end{subequations}
    \item the function $p \in L^2(\Omega_{2\eps})$, defined by
    \begin{align}\label{eq:pushforward_subgradient}
        p := \frac{\de}{\de \L^N}\left[\frac{(\Gamma_\eps)_\sharp\rho_0 - \rho_0}{\eps}
        +
        \frac{\rho_1-(\gamma_\eps)_\sharp\rho_1}{\eps}\right],
    \end{align}      
    satisfies the inequality    \begin{align}\label{eq:subgradient_ineq_pushforward}
        \TV_\eps(u) + \int_\Omega p\varphi\de x \leq \TV_\eps(u+\varphi)
    \end{align}
    for all $\varphi \in L^2(\Omega)$ with $\varphi=0$ almost everywhere in $\Omega\setminus\Omega_{2\eps}$.
    \end{itemize}
\end{proposition}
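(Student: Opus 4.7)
The plan is to exploit the smallness $\eps < c/\Lambda_{\max}$ so that the map $G(z) := z - \eps\,\nabla u(z)/|\nabla u(z)|$ becomes a $C^1$-diffeomorphism satisfying $\Gamma_\eps = G^{-1}$, then to apply a change-of-variables formula, and finally to verify the subgradient inequality by testing first against smooth perturbations and extending by density. For the first bullet, since $\overline{B(x,\eps)}\subset\Omega$ whenever $x\in\Omega_\eps$ and $|\nabla u|\geq c>0$ excludes interior critical points, any maximizer $z$ of $u$ over $\overline{B(x,\eps)}$ lies on $\partial B(x,\eps)$, and the Lagrange multiplier condition forces $\nabla u(z) = \lambda(z-x)$ with $\lambda>0$, i.e., $G(z) = x$. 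A direct computation yields $|\nabla(\nabla u/|\nabla u|)| \leq 2|\nabla^2 u|/|\nabla u| \leq 2\Lambda_{\max}/c$, so $\|\nabla G - \mathbb{1}\| \leq 2\eps\Lambda_{\max}/c < 1$ for $\eps$ small enough; by the inverse function theorem, $G$ is a $C^1$-diffeomorphism onto its image, so $\Gamma_\eps(x) = G^{-1}(x)$ is the unique maximizer, and the argument for $\gamma_\eps$ is identical after replacing $u$ with $-u$. For the second bullet, for $y\in\Omega_{2\eps}$ the point $G(y)$ has distance at least $\eps$ to $\R^N\setminus\Omega$ and therefore lies in $\Omega_\eps$, so $\Gamma_\eps(G(y)) = y$ by the first bullet; the pushforward change-of-variables formula then produces \labelcref{eq:pushforward_densities}, and the regularity $u\in C^2$, $\rho_i\in C^1$ places both densities in $L^2(\Omega_{2\eps})$.

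For the third bullet, the plan is to first verify \labelcref{eq:subgradient_ineq_pushforward} for smooth test functions $\varphi \in C^\infty_c(\Omega_{2\eps})$. Continuity of $u + \varphi$ and the definition of $\Gamma_\eps$ yield, for every $x \in \Omega_\eps$,
\begin{align*}
    \esssup_{B(x,\eps)\cap\Omega}(u+\varphi) \geq (u+\varphi)(\Gamma_\eps(x)) = \esssup_{B(x,\eps)\cap\Omega}u + \varphi(\Gamma_\eps(x)),
\end{align*}
with the analogous inequality for essential infima at $\gamma_\eps(x)$. For $x \in \Omega\setminus\Omega_\eps$, one readily checks that $B(x,\eps)\cap\Omega \cap \Omega_{2\eps} = \emptyset$ and $\varphi(x) = 0$ a.e., so the corresponding summands in $\TV_\eps(u+\varphi) - \TV_\eps(u)$ vanish. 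Integrating the pointwise inequalities against $\rho_0$ and $\rho_1$, and converting $\int \varphi\circ\Gamma_\eps \de\rho_0$ and $\int \varphi\circ\gamma_\eps \de\rho_1$ into integrals against $\varphi$ through the pushforward densities from the second bullet, produces \labelcref{eq:subgradient_ineq_pushforward} with $p$ as in \labelcref{eq:pushforward_subgradient}. For general $\varphi \in L^2(\Omega)$ vanishing outside $\Omega_{2\eps}$, approximation by smooth functions combined with $L^2$-continuity of the linear term $\int p\,\varphi\,\de x$ and lower semicontinuity of $\TV_\eps$ (via Fatou applied to the coarea formula in \cref{lem:coarea}) closes the extension; in the case $\TV_\eps(u+\varphi) = +\infty$, the inequality is trivial.

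The main obstacle is this final density step, since $\esssup_{B(x,\eps)}(\cdot)$ is not continuous under $L^2$-perturbations of $\varphi$. Combining the $\TV_\eps$ lower semicontinuity with a reduction to essentially bounded perturbations, on which smooth mollifications can be used to transmit the pointwise inequalities above, should be enough to close the argument.
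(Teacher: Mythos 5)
Your treatment of the first two bullets is sound, and it takes a slightly different route from the paper: you obtain uniqueness of the maximizer by showing that $G(z)=z-\eps\nabla u(z)/|\nabla u(z)|$ is an injective perturbation of the identity (every maximizer solves $G(z)=x$), whereas the paper runs a second-order Taylor expansion of the Lagrangian to show the maximizer is a strict, hence unique, constrained maximum. Both work; note only that $\Lambda_{\max}$ (the largest eigenvalue) does not by itself bound $\|\nabla^2 u\|$, so your bound $\|\nabla G-\mathbb{1}\|\le 2\eps\Lambda_{\max}/c$ should really read $2\eps\|\nabla^2u\|_\infty/c$ --- harmless under the "small enough'' hypothesis, but worth stating.

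The third bullet, however, has a genuine gap, and it sits exactly where you flag "the main obstacle.'' Your plan is to prove \labelcref{eq:subgradient_ineq_pushforward} for $\varphi\in C^\infty_c(\Omega_{2\eps})$ and then pass to general $\varphi\in L^2$ by approximation, invoking lower semicontinuity of $\TV_\eps$. But the semicontinuity points the wrong way: from $\TV_\eps(u)+\int p\varphi_k\le\TV_\eps(u+\varphi_k)$ and $\varphi_k\to\varphi$ in $L^2$, lower semicontinuity only yields $\TV_\eps(u+\varphi)\le\liminf_k\TV_\eps(u+\varphi_k)$, which chains the inequalities in the useless direction. To close the limit you would need the \emph{upper} bound $\liminf_k\TV_\eps(u+\varphi_k)\le\TV_\eps(u+\varphi)$, i.e., a recovery-sequence property for the mollifications; for the supremal nonlocal functional $\TV_\eps$ this is not a Jensen-type triviality (mollification enlarges the effective ball and need not decrease essential suprema), and you do not establish it. The density step is therefore not "closed.'' The paper avoids approximation altogether: for arbitrary $\varphi\in L^2$ it observes that, since $\Gamma_\eps$ and $\gamma_\eps$ are diffeomorphisms and hence preserve Lebesgue null sets, for a.e.\ $x\in\Omega_\eps$ the points $\Gamma_\eps(x)$ and $\gamma_\eps(x)$ are Lebesgue points of $u+\varphi$, so that $(u+\varphi)\circ\Gamma_\eps(x)\le\esssup_{B(x,\eps)\cap\Omega}(u+\varphi)$ and the analogous infimum bound hold pointwise a.e.; the subgradient inequality then follows directly from the change of variables, with no smoothness of $\varphi$ required. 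Replacing your density step by this Lebesgue-point argument would repair the proof.
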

\begin{proof}
    We will derive the first two statements only for $\Gamma_\eps$; the ones for $\gamma_\eps$ follow from replacing $u$ by $-u$.

 \textit{Step 1 (Optimality condition).}   First, we note that for any $x\in\Omega_{\eps}$ there exists a point $y^* \in \argmax_{\closure{B(x,\eps)\cap\Omega}}u$ since $u$ is continuous.
    Second, by the Karush--Kuhn--Tucker optimality conditions (or direct verification) we get that $y^*$ satisfies
    \begin{align}\label{eq:KKT}
        \nabla u(y^*) - \lambda^*(y^*-x) = 0,
        \qquad
        \abs{y^*-x}\leq \eps,
    \end{align}
    for a Lagrange multiplier $\lambda^*\geq 0$ which is such that $\lambda^*(\abs{y^*-x}^2-\eps^2) = 0$.
    Since by assumption $\abs{\nabla u}\geq c>0$ on $\Omega$, the maximum has to be taken on the boundary of $B(x,\eps)$, i.e., $\abs{y^*-x}=\eps$. 
    Therefore, we obtain from \labelcref{eq:KKT} that the Lagrange multiplier is given by $\lambda^* = \frac{\abs{\nabla u(y^*)}}{\abs{y^*-x}} = \frac{\abs{\nabla u(y^*)}}{\eps}$.

 \textit{Step 2 (Unique maximum).}   Next we prove that the maximizer $y^*$ is uniquely determined. For this, we define the Lagrangian
    \begin{align*}
        L(y,\lambda) := -u(y) + \frac{\lambda}{2}\left(\abs{y-x}^2 - \eps^2\right)\quad \text{ for }\lambda \in [0,\infty)
    \end{align*}
    and observe that it holds
    \begin{align*}
        \nabla^2_yL(y^*,\lambda^*) = -\nabla^2 u(y^*) + \lambda^*\mathbb 1 = -\nabla^2u(y^*) + \frac{\abs{\nabla u(y^*)}}{\eps} \mathbb 1\succeq \left(-\Lambda_{\max} + \frac{c}{\eps}\right)\mathbb 1 \succ 0
    \end{align*}
    by our assumption on $\eps$. We let $M_\eps : = \frac{c}{\eps}-\Lambda_{\max}$ and, supposing that
     $\tilde y \in \partial B(x,\eps)$ is another maximizer, we get using Taylor expansions and applying \labelcref{eq:KKT} that
    \begin{align*}
        -u(\tilde y) 
        &= 
        L(\tilde y,\lambda^*)
        \\
        &=
        L(y^*,\lambda^*) 
        +
        \nabla_y L(y^*,\lambda^*)
        (\tilde y - y^*)
        +
        \frac{1}{2}
        (\tilde y - y^*)^T \nabla_y^2L(y^*,\lambda^*)(\tilde y - y^*)
        +
        o\left(\abs{\tilde y - y^*}^2\right)
        \\
        &\geq 
        -u(y^*) 
        +
        \frac{M_\eps}{2}
        \abs{\tilde y - y^*}^2
        -
        \omega(|\tilde y-y^*|)\abs{\tilde y - y^*}^2
    \end{align*}
    where $\omega$ is the modulus of continuity of $\nabla^2_y L( y,\lambda^*)$ in $y$, which is the same as the modulus of continuity of $\nabla^2 u$ (and thereby independent of $\eps$).
    Using that $u(\tilde y) = u(y^*)$, we find
    \begin{align}\label{ineq:contradiction}
        \frac{M_\eps}{2}
        \abs{\tilde y - y^*}^2
        \leq 
        \omega(|\tilde y-y^*|)\abs{\tilde y - y^*}^2.
    \end{align}
    We note that $M_\eps$ is positive for $\eps$ small enough and even $\lim_{\eps\to 0}M_\eps = \infty$.
    Therefore, for $\eps>0$ small enough, \labelcref{ineq:contradiction} becomes a contradiction unless $\tilde y = y^*$.  
    Hence, we have shown that $\Gamma_\eps(x) = \{y^*\}$ is a singleton.

   \textit{Step 3 (Computation of the pushforward).}  
   Vice versa, solving\labelcref{eq:KKT} for $x$, we see that $\Gamma_\eps$ is one-to-one with inverse
    \begin{align}\label{eq:T_inverse}
        \Gamma_\eps^{-1}(y^*) = y^* - \eps\frac{\nabla u(y^*)}{\abs{\nabla u(y^*)}},
    \end{align} and in particular, this is a well-defined injective $C^1$ function on $\Gamma(\Omega_\eps)$. 
    Therefore we can use the definition of the pushforward and the area formula to show for any continuous function $\varphi\in C(\closure\Omega)$ that
    \begin{align*}
        \int_{\Gamma_\eps(\Omega_\eps)} \varphi \de(\Gamma_\eps)_\sharp\rho_0
        &=
        \int_{\Omega_\eps} \varphi\circ \Gamma_\eps\de\rho_0
        \\
        &= \int_{\Gamma_\eps(\Omega_\eps)}\varphi(y)\abs{\det(\nabla\Gamma_\eps^{-1}(y))}\rho_0(\Gamma_\eps^{-1}(y))\de y
        \\
        &= \int_{\Gamma_\eps(\Omega_\eps)}\varphi(y)\abs{\det\left(\nabla\left(y-\eps\frac{\nabla u(y)}{\abs{\nabla u(y)}}\right)\right)}\rho_0\left(y-\eps\frac{\nabla u(y)}{\abs{\nabla u(y)}}\right)\de y.
    \end{align*}
    Restricting ourselves to arbitrary continuous functions $\varphi$ with $\supp\varphi \subset {\Omega_{2\eps}}\subset \Gamma_\eps(\Omega_\eps)$, we obtain the claimed identity \labelcref{eq:pushforward_subgradient} for the pushforwards. 
    Note that ${\Omega_{2\eps}}\subset \Gamma_\eps(\Omega_\eps)$ follows from \labelcref{eq:T_inverse}. 

  \emph{Step 4 (Local subgradient).}  
  To obtain the last claim \labelcref{eq:subgradient_ineq_pushforward}, let $\mathcal{N}$ be the set of points, which are not Lebesgue points for $u+\varphi$ and hence $\mathcal{L}^N(\mathcal{N}) = 0$.
    In particular, also the sets $\mathcal{N}_1 := \Gamma_\eps(\Omega_\eps)\cap \mathcal{N}$ and $\mathcal{N}_2 := \gamma_\eps(\Omega_\eps)\cap \mathcal{N}$ have zero Lebesgue measure.
    Since $\Gamma_\eps^{-1}$ and $\gamma_\eps^{-1}$ are diffeomorphisms from $\Gamma_\eps(\Omega_\eps)$ and $\gamma_\eps(\Omega_\eps)$, respectively, to $\Omega_{\eps}$, we have
    \begin{align*}
        \L^N(\{x\in\Omega_\eps\st\Gamma_\eps(x)\in \mathcal{N}\text{ or } \gamma_\eps(x)\in \mathcal{N}\})
        &=
        \L^N(\{x\in\Omega_\eps\st\Gamma_\eps(x)\in \mathcal{N}_1\text{ or } \gamma_\eps(x)\in \mathcal{N}_2\})
        \\
        &\subset 
        \L^N(\Gamma_\eps^{-1}(\mathcal{N}_1)\cup\gamma_\eps^{-1}(\mathcal{N}_2))
        =0,
    \end{align*}
    hence, for almost every $x\in \Omega_{\eps}$ the points $\Gamma_\eps(x)$ and $\gamma_\eps(x)$ are Lebesgue points of $u+\varphi$. Necessarily, it follows that for such $x\in\Omega_\eps$ it holds
    \begin{align*}
        (u+\varphi)\circ \Gamma_\eps(x) \leq\esssup_{B(x,\eps)}(u+\varphi) \quad \text{ and } \quad \essinf_{B(x,\eps)}(u+\varphi) \leq (u+\varphi)\circ \gamma_\eps(x).
    \end{align*} 
    Hence, we obtain for any $\varphi\in L^2(\Omega)$ with $\varphi=0$ almost everywhere on $\Omega\setminus\Omega_{2\eps}$ that
    \begin{align*}
        \eps\int_\Omega &  p\varphi\de x
        =
        \int_{\Gamma_\eps(\Omega_\eps)}
        \varphi\de(\Gamma_\eps)_\sharp\rho_0
        -
        \int_{\Omega}
        \varphi\de\rho_0
        +
        \int_{\Omega}
        \varphi\de\rho_1
        -
        \int_{\gamma_\eps(\Omega_\eps)}
        \varphi\de(\gamma_\eps)_\sharp\rho_1
        \\
        &=
        \int_{\Omega_\eps}
        \varphi\circ\Gamma_\eps\de\rho_0
        -
        \int_{\Omega}
        \varphi\de\rho_0
        +
        \int_{\Omega}
        \varphi\de\rho_1
        -
        \int_{\Omega_\eps}
        \varphi\circ\gamma_\eps\de\rho_1
        \\
        &=
        \int_{\Omega_\eps}
        \Big((u+\varphi)\circ\Gamma_\eps
       - u\circ\Gamma_\eps\Big)\de\rho_0
        +\int_{\Omega_\eps}\Big(u\circ\gamma_\eps
        -
        (u+\varphi)\circ\gamma_\eps\Big)\de\rho_1
        \\
        &\quad
        -
        \int_{\Omega}
        \varphi\de\rho_0
        +
        \int_{\Omega}
        \varphi\de\rho_1
        \\
        &\leq 
        \int_\Omega 
        \left(\esssup_{B(\cdot,\eps)\cap \Omega}(u+\varphi)
        -
        \esssup_{B(\cdot,\eps)\cap \Omega}u\right)
        \de\rho_0
        -
        \int_\Omega 
        \left(\essinf_{B(\cdot,\eps)\cap \Omega}u
        -
        \essinf_{B(\cdot,\eps)\cap \Omega}(u+\varphi)\right)
        \de\rho_1
        \\
        &\quad
        -
        \int_{\Omega}
        \varphi\de\rho_0
        +
        \int_{\Omega}
        \varphi\de\rho_1
        \\
        &\quad
        -
        \int_{\Omega\setminus\Omega_\eps}
        \left(\esssup_{B(\cdot,\eps)\cap \Omega}(u+\varphi)
        -
        \esssup_{B(\cdot,\eps)\cap \Omega}u\right)
        \de\rho_0
        +
        \int_{\Omega\setminus\Omega_\eps}
        \left(\essinf_{B(\cdot,\eps)\cap \Omega}u
        -
        \essinf_{B(\cdot,\eps)\cap \Omega}(u+\varphi)\right)
        \de\rho_1.
    \end{align*}
    The last two integrals vanish because $\varphi=0$ in $\Omega \setminus \Omega_{2\eps}$. Hence
    \begin{align*}
        \eps\int_\Omega   p\varphi\de x
        &=
        \int_\Omega 
        \left(\esssup_{B(\cdot,\eps)\cap \Omega}(u+\varphi)
        -(u+\varphi)\right)
        \de\rho_0
        +
        \int_\Omega 
        \left(u+\varphi
        -
        \essinf_{B(\cdot,\eps)\cap \Omega}(u+\varphi)\right)
        \de\rho_1
        \\
        &\qquad
        -\int_\Omega 
        \left( \esssup_{B(\cdot,\eps)\cap \Omega}u
        -
        u\right)
        \de\rho_0
        -
        \int_\Omega 
        \left(u
        -
        \essinf_{B(\cdot,\eps)\cap \Omega}u\right)
        \de\rho_1
        \\
        &=
        \eps\TV_\eps(u+\varphi) - \eps\TV_\eps(u).
    \end{align*}
    This shows \labelcref{eq:subgradient_ineq_pushforward} and concludes the proof.
\end{proof}

Next we prove that the subgradient identified in the previous lemma is consistent with a weighted $1$-Laplacian operator which, neglecting boundary conditions, is the subgradient of a local weighted total variation.
\begin{proposition}\label{prop:consistency}
    Under the conditions of \cref{prop:density_of_pushforward} it holds that
    \begin{align}
        \frac{\de}{\de \L^N}\left[\frac{(\Gamma_\eps)_\sharp\rho_0 - \rho_0}{\eps}
        +
        \frac{\rho_1-(\gamma_\eps)_\sharp\rho_1}{\eps}\right]
        =
        -\div\left(\rho\frac{\nabla u}{\abs{\nabla u}}\right) + o_{\eps\to 0}(1)
        \quad
        \text{uniformly in }\Omega_{2\eps}.
    \end{align}
\end{proposition}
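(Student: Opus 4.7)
The plan is to simply expand the explicit formulas for the pushforward densities provided by \cref{prop:density_of_pushforward} to first order in $\eps$. Writing $\nu(y) := \nabla u(y)/\abs{\nabla u(y)}$, which lies in $C^1(\closure\Omega)$ by the assumption $u \in C^2(\closure\Omega)$ with $\abs{\nabla u}\geq c>0$, the transport maps take the clean form $T_\pm(y) = y \pm \eps \nu(y)$. On $\Omega_{2\eps}$ both formulas are valid and we can write
\begin{align*}
    \frac{\de (\Gamma_\eps)_\sharp \rho_0}{\de \L^N}(y)
    &= \rho_0(y-\eps\nu(y))\,\abs{\det(\mathbb 1 - \eps \nabla\nu(y))}, \\
    \frac{\de (\gamma_\eps)_\sharp \rho_1}{\de \L^N}(y)
    &= \rho_1(y+\eps\nu(y))\,\abs{\det(\mathbb 1 + \eps \nabla\nu(y))}.
\end{align*}

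Next I would expand each factor. Since $\nabla\nu$ is uniformly bounded on $\closure\Omega$, for $\eps$ small the matrices $\mathbb 1 \pm \eps \nabla\nu$ have positive determinant and the standard expansion gives $\det(\mathbb 1 \pm \eps \nabla\nu) = 1 \pm \eps \tr(\nabla\nu) + O(\eps^2) = 1 \pm \eps\div\nu + O(\eps^2)$, uniformly in $y$. Using the $C^1$-regularity of $\rho_i$ from \cref{ass:densities}, a first-order Taylor expansion yields $\rho_i(y\pm\eps\nu(y)) = \rho_i(y) \pm \eps\,\nabla\rho_i(y)\cdot\nu(y) + o(\eps)$, where the $o(\eps)$ is uniform on $\Omega_{2\eps}$ because $\nabla\rho_i$ is uniformly continuous on compact subsets. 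Multiplying these two expansions and collecting terms of order $\eps$ gives
\begin{align*}
    \frac{\de (\Gamma_\eps)_\sharp \rho_0}{\de \L^N}(y) - \rho_0(y) &= -\eps\bigl(\nabla\rho_0\cdot\nu + \rho_0\div\nu\bigr)(y) + o(\eps) = -\eps\,\div(\rho_0\,\nu)(y) + o(\eps), \\
    \rho_1(y) - \frac{\de (\gamma_\eps)_\sharp \rho_1}{\de \L^N}(y) &= -\eps\bigl(\nabla\rho_1\cdot\nu + \rho_1\div\nu\bigr)(y) + o(\eps) = -\eps\,\div(\rho_1\,\nu)(y) + o(\eps),
\end{align*}
again uniformly in $y \in \Omega_{2\eps}$.

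Dividing by $\eps$ and adding the two identities, the left-hand side becomes precisely the expression in the statement, while the right-hand side sums to $-\div((\rho_0+\rho_1)\nu) + o(1) = -\div(\rho\,\nabla u/\abs{\nabla u}) + o(1)$, with uniform error on $\Omega_{2\eps}$. There is essentially no obstacle here: the only point requiring a little care is that $\rho_i$ is only $C^1$, so the remainders in the Taylor expansion are genuinely $o(\eps)$ rather than $O(\eps^2)$; uniformity follows from the uniform continuity of $\nabla\rho_i$ and $\nabla\nu$ on the compact set in question.
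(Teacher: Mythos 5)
Your proposal is correct and follows essentially the same route as the paper: Taylor-expand the density $\rho_i$ and the Jacobian determinant of the transport map $y\mapsto y\mp\eps\nabla u/\abs{\nabla u}$ to first order in $\eps$, drop the absolute value since the determinant is positive for small $\eps$, multiply, and recombine via the product rule into $-\div(\rho_i\,\nabla u/\abs{\nabla u})$. The only cosmetic difference is your cleaner notation $\nu=\nabla u/\abs{\nabla u}$; no further comment is needed.
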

\begin{proof}
We start by investigating the density of $(\Gamma_\eps)_\sharp\rho_0$ given in \cref{prop:density_of_pushforward}.
Let us fix $y\in\Omega_{2\eps}$.
Using a Taylor expansion of $\rho_0\in C^1(\closure\Omega)$ and utilizing $\abs{\nabla u(y)}\geq c$, we have uniformly in $\Omega_{2\eps}$ that
\begin{align*}
    \rho_0\left(y-\eps\frac{\nabla u(y)}{\abs{\nabla u(y)}}\right)
    =
    \rho_0(y) - \eps\nabla\rho_0(y)\cdot\frac{\nabla u(y)}{\abs{\nabla u(y)}} + o(\eps).
\end{align*}
Furthermore, using a Taylor expansion of the determinant and utilizing $u \in C^2(\closure\Omega)$ with ${\abs{\nabla u}\geq c}$, we get
\begin{align*}
    \det\left(\nabla\left(y - \eps\frac{\nabla u(y)}{\abs{\nabla u(y)}}\right)\right)
    =
    1 - \eps\div\left(\frac{\nabla u(y)}{\abs{\nabla u(y)}}\right)
    +
    O(\eps^2).
\end{align*}
In particular, we see that the determinant is non-negative if $\eps>0$ is sufficiently small.
Multiplying the two expressions and using the product rule yields
\begin{align*}
    &\phantom{{}={}}
    \rho_0\left(y-\eps\frac{\nabla u(y)}{\abs{\nabla u(y)}}\right)
    \abs{\det\left(\nabla\left(y - \eps\frac{\nabla u(y)}{\abs{\nabla u(y)}}\right)\right)}
    \\
    &=
    \left(\rho_0(y) - \eps\nabla\rho_0(y)\cdot\frac{\nabla u(y)}{\abs{\nabla u(y)}} + o(\eps)\right)
    \left(
    1 - \eps\div\left(\frac{\nabla u(y)}{\abs{\nabla u(y)}}\right)
    +
    O(\eps^2)
    \right)
    \\
    &= 
    \rho_0(y)
    - \eps\nabla\rho_0(y)\cdot\frac{\nabla u(y)}{\abs{\nabla u(y)}} 
    -
    \eps\rho_0(y)
    \div\left(\frac{\nabla u(y)}{\abs{\nabla u(y)}}\right)
    + o(\eps)
    \\
    &=
    \rho_0(y) - \eps\div\left(\rho_0(y)\frac{\nabla u(y)}{\abs{\nabla u(y)}}\right)
    +
    o(\eps).
\end{align*}
Using \cref{prop:density_of_pushforward} we therefore obtain
\begin{align*}
    \frac{\de}{\de \L^N}\left[\frac{(\Gamma_\eps)_\sharp\rho_0 - \rho_0}{\eps}\right]
    =
    -\div\left(\rho_0(y)\frac{\nabla u(y)}{\abs{\nabla u(y)}}\right)
    +
    o_{\eps\to 0}(1).
\end{align*}
Repeating the same arguments for $(\gamma_\eps)_\sharp\rho_1$ and using $\rho=\rho_0+\rho_1$ leads to the final conclusion.
\end{proof}
Next, we prove that the total variation is a lower semicontinuous functional with respect to the weak $L^2$ topology.
In \cite{bungert2023geometry}, it was already proved that it is weak-$*$ lower semicontinuous in $L^\infty$, but this is insufficient for our purposes, because we will use additivity of the subdifferential (see the proof of \cref{prop:select_princi}). 
\begin{lemma}\label{lem:TV_lsc}
    Let $(u_k)_{k\in\N}\subset L^2(\Omega)$ be a sequence which converges weakly to $u\in L^2(\Omega)$.
    Then it holds that
    \begin{align*}
        \TV_\eps(u) \leq \liminf_{k\to\infty} \TV_\eps(u_k).
    \end{align*}
\end{lemma}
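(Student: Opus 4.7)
The plan is to exploit the fact that $\TV_\eps$ is convex and to reduce weak lower semicontinuity to strong $L^2$ lower semicontinuity via the classical result that for convex functionals the two notions coincide (e.g., by Mazur's lemma applied to a subsequence realizing the $\liminf$).

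\textbf{Step 1: Convexity.} For any fixed ball $B(x,\eps)\cap\Omega$, the map $u\mapsto \esssup_{B(x,\eps)\cap\Omega} u$ is convex and $u \mapsto \essinf_{B(x,\eps)\cap\Omega}u$ is concave. Therefore each of the two integrands appearing in the definition \labelcref{eq:TV} of $\TV_\eps$ is convex in $u$ pointwise in $x$, and convexity is preserved under integration against the non-negative measures $\rho_0,\rho_1$.

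\textbf{Step 2: Strong $L^2$ lower semicontinuity.} Let $u_k\to u$ strongly in $L^2(\Omega)$. Extract a subsequence (not relabeled) achieving $\liminf_k \TV_\eps(u_k)$, then a further subsequence such that $u_k\to u$ pointwise a.e. in $\Omega$. For any fixed $x\in\Omega$ and any $k$, we have $u_k(y)\leq \esssup_{B(x,\eps)\cap\Omega}u_k$ for a.e.\ $y\in B(x,\eps)\cap\Omega$; passing to the $\liminf$ in $k$, the a.e.\ convergence yields $u(y)\leq\liminf_{k\to\infty}\esssup_{B(x,\eps)\cap\Omega}u_k$ for a.e.\ $y$, and taking the essential supremum over $y$ gives
\begin{align*}
\esssup_{B(x,\eps)\cap\Omega} u \leq \liminf_{k\to\infty}\esssup_{B(x,\eps)\cap\Omega} u_k.
\end{align*}
An analogous argument applied to $-u_k$ yields $\essinf_{B(x,\eps)\cap\Omega}u\geq \limsup_k \essinf_{B(x,\eps)\cap\Omega}u_k$. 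Hence for a.e.\ $x$,
\begin{align*}
\esssup_{B(x,\eps)\cap\Omega}u - u(x) \leq \liminf_{k\to\infty}\bigl(\esssup_{B(x,\eps)\cap\Omega}u_k - u_k(x)\bigr),
\end{align*}
and similarly for the second integrand. Since both integrands are non-negative, Fatou's lemma (with respect to the measures $\rho_0,\rho_1$, whose densities are bounded by \cref{ass:densities}) delivers $\TV_\eps(u)\leq \liminf_k \TV_\eps(u_k)$ along this subsequence, hence along the original sequence by the standard subsequence principle.

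\textbf{Step 3: Passing to weak convergence.} If $u_k\wto u$ weakly in $L^2(\Omega)$, then by Mazur's lemma there is a sequence of convex combinations $\tilde u_k := \sum_{j=k}^{N_k}\lambda_{k,j} u_j$ converging strongly to $u$ in $L^2$. By convexity (Step 1) and strong lower semicontinuity (Step 2),
\begin{align*}
\TV_\eps(u) \leq \liminf_{k\to\infty} \TV_\eps(\tilde u_k) \leq \liminf_{k\to\infty} \sup_{j\geq k} \TV_\eps(u_j) = \liminf_{k\to\infty}\TV_\eps(u_k),
\end{align*}
which concludes the proof. The one technical point that requires a moment of care is the measurability in $x$ of $\esssup_{B(x,\eps)\cap\Omega}u$ and $\essinf_{B(x,\eps)\cap\Omega}u$, but this is standard: for a Borel representative $u$, both functions are respectively upper and lower semicontinuous (hence Borel) in $x$, so Fatou's lemma applies in Step 2 without issue.
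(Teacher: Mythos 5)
Your proof is correct and follows essentially the same route as the paper's: convexity reduces weak to strong lower semicontinuity, and then an a.e.\ pointwise argument plus Fatou's lemma handles strongly convergent sequences (the paper phrases the pointwise step via Lebesgue points and convergence of ball averages, while you use a.e.\ convergence of $u_k(y)$ for $y$ in the ball directly --- both work). Two small slips to fix in the write-up: in Step 3 the identity $\liminf_{k\to\infty}\sup_{j\geq k}\TV_\eps(u_j)=\liminf_{k\to\infty}\TV_\eps(u_k)$ is false as stated (the left-hand side equals the $\limsup$), so you must first pass to a subsequence realizing the $\liminf$ before applying Mazur's lemma, as your opening paragraph already indicates; and since the balls $B(x,\eps)$ are open, the map $x\mapsto\esssup_{B(x,\eps)\cap\Omega}u$ is lower (not upper) semicontinuous, though the Borel measurability you need still follows.
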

\begin{proof}
It suffices to show that $$\TV_\eps^0 (u) : = \frac{1}{\eps}\int_\Omega 
\left(\esssup_{B(x,\eps)\cap\Omega} u - u(x)\right)
\de\rho_0(x)$$ is weakly lower semi-continuous. 
As $\TV^0_\eps$ is convex, it suffices to prove lower semi-continuity with respect to strongly converging sequences. In particular, we assume $u_k \to u$ in $L^2(\Omega)$, and may further suppose that $u_k(x) \to u(x)$ for almost every $x\in \Omega$.

Let $\delta>0$.
The strong convergence of $u_k$ to $u$ in particular implies for all $y\in\Omega$ and $0<r<\delta$:
    \begin{align*} 
        \frac{1}{\L^N(B(y,r)\cap \Omega)}\int_{B(y,r)\cap\Omega}u\de z
        =
        \lim_{k\to\infty}\frac{1}{\L^N(B(y,r)\cap \Omega)}\int_{B(y,r)\cap\Omega}u_k\de z.
    \end{align*}
    Taking Lebesgue points of $u$, for almost every $y\in\Omega$ we get
    \begin{align*}
        u(y) 
        &=
        \lim_{r\to 0}
        \frac{1}{\L^N(B(y,r)\cap \Omega)}\int_{B(y,r)\cap\Omega}u\de z
        =
        \lim_{r\to 0}
        \lim_{k\to\infty}\frac{1}{\L^N(B(y,r)\cap \Omega)}\int_{B(y,r)\cap\Omega}u_k\de z
        \\
        &\leq 
        \liminf_{k\to\infty}
        \esssup_{B(y,\delta)\cap\Omega}u_k.
    \end{align*}
    As $\delta>0$ is arbitrary, we conclude that for almost every $x\in\Omega$
    \begin{align*}
        \esssup_{B(x,\eps)\cap \Omega}u
        &\leq 
        \liminf_{k\to\infty}
        \esssup_{B(x,\eps)\cap\Omega}u_k.
    \end{align*}
    As we have the pointwise convergence of $u_k,$ we improve this to 
    \begin{align*}
        \esssup_{B(x,\eps)\cap \Omega}u - u(x)
        &\leq 
        \liminf_{k\to\infty}\left(
        \esssup_{B(x,\eps)\cap\Omega}u_k - u_k(x) \right),
    \end{align*}
    from which Fatou's lemma concludes the claimed lower semi-continuity.
\end{proof}

Next we establish an important submodularity property of the total variation $\TV_\eps$.
In fact it follows from submodularity of the perimeter $\Per_\eps$ (proved in \cite[Proposition 3.3]{bungert2023geometry}) and the coarea formula (see \cite[Proposition 3.2]{chambolle2010continuous}) but for self-containedness we elaborate on the proof.
\begin{lemma}\label{lem:submodularity}
It holds for all $u,v\in L^2(\Omega)$ that
\begin{align*}
	\TV_\eps(u\vee v)+	\TV_\eps(u\wedge v)
	\leq 
	\TV_\eps(u) + \TV_\eps(v).
\end{align*}
\end{lemma}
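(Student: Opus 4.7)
The plan is to deduce the submodularity of $\TV_\eps$ by reducing to the submodularity of the perimeter functional $\Per_\eps$ via the coarea formula recorded in \cref{lem:coarea}. Recall from \cite[Proposition 3.3]{bungert2023geometry} that for all $A, B \in \B(\Omega)$ one has
\begin{align*}
    \Per_\eps(A \cup B) + \Per_\eps(A \cap B) \leq \Per_\eps(A) + \Per_\eps(B),
\end{align*}
which follows quickly from the identities $\one_{A\cup B} = \one_A \vee \one_B$ and $\one_{A\cap B} = \one_A \wedge \one_B$ together with the elementary pointwise inequalities $(a\vee b)+(a\wedge b) = a+b$ and $\esssup (f\vee g) + \esssup(f\wedge g) \leq \esssup f + \esssup g$ (and analogously for the essential infimum), applied inside the integrals defining $\Per_\eps$.

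The key observation is that superlevel sets interact nicely with the lattice operations: for every $t \in \R$,
\begin{align*}
    \{u\vee v > t\} = \{u>t\} \cup \{v>t\}, \qquad \{u\wedge v > t\} = \{u>t\} \cap \{v>t\}.
\end{align*}
First I would assume without loss of generality that $\TV_\eps(u) + \TV_\eps(v) < \infty$, so that for almost every $t \in \R$ both $\Per_\eps(\{u>t\})$ and $\Per_\eps(\{v>t\})$ are finite. Applying the perimeter submodularity to $A = \{u>t\}$ and $B = \{v>t\}$, I obtain
\begin{align*}
    \Per_\eps(\{u\vee v>t\}) + \Per_\eps(\{u\wedge v>t\}) \leq \Per_\eps(\{u>t\}) + \Per_\eps(\{v>t\})
\end{align*}
for almost every $t \in \R$.

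Next I would integrate this inequality over $t \in \R$ and apply the coarea formula \cref{lem:coarea} to each of the four terms, yielding
\begin{align*}
    \TV_\eps(u\vee v) + \TV_\eps(u\wedge v) \leq \TV_\eps(u) + \TV_\eps(v),
\end{align*}
which is the desired inequality. There is no real obstacle here; the only mild subtlety is ensuring that both sides can be handled simultaneously as extended real numbers. If $\TV_\eps(u) + \TV_\eps(v) = \infty$ the inequality is trivial, and otherwise the coarea formula makes every quantity a well-defined integral of non-negative integrands, so Tonelli's theorem and the pointwise submodularity inequality combine cleanly to give the result.
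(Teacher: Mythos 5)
Your proof is correct, but it takes a different route from the paper's. The paper proves the lemma directly from the definition \labelcref{eq:TV}: it uses the pointwise inequalities $\esssup_{B(\cdot,\eps)\cap\Omega}(u\vee v)\leq(\esssup_{B(\cdot,\eps)\cap\Omega}u)\vee(\esssup_{B(\cdot,\eps)\cap\Omega}v)$ and $\esssup_{B(\cdot,\eps)\cap\Omega}(u\wedge v)\leq(\esssup_{B(\cdot,\eps)\cap\Omega}u)\wedge(\esssup_{B(\cdot,\eps)\cap\Omega}v)$, the reverse analogues for the essential infimum, and the identity $a\vee b+a\wedge b=a+b$, applied inside the integrals defining $\TV_\eps$ itself --- no coarea formula and no reduction to sets. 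Your route (submodularity of $\Per_\eps$ on superlevel sets, then integrate via \cref{lem:coarea}) is precisely the alternative the paper acknowledges in the sentence preceding the lemma but declines to follow ``for self-containedness'': it is conceptually clean and reduces to a known set-level statement, but it imports two external ingredients (the coarea formula for strict superlevel sets and \cite[Proposition 3.3]{bungert2023geometry}), whereas the direct argument is a three-line computation from the definition. Two minor remarks on your write-up: the restriction to almost every $t$ with finite perimeters is unnecessary, since the set-level submodularity inequality holds for every $t$ as an inequality of non-negative extended reals (it is vacuous when the right-hand side is infinite), and Tonelli then handles the integration of all four non-negative integrands without any case distinction; and note that your sketched proof of the perimeter submodularity is in substance identical to the paper's direct proof of the lemma, so the detour through sets does not actually save you the pointwise lattice inequalities.
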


\begin{proof}
The statement follows from the directly obtained inequalities 
\begin{align*}
\esssup_{B(\cdot,\eps)\cap\Omega} (u\vee v) &\leq \Big(\esssup_{B(\cdot,\eps)\cap\Omega} u\Big) \vee \Big(\esssup_{B(\cdot,\eps)\cap\Omega} v\Big),\\ 
\esssup_{B(\cdot,\eps)\cap\Omega} (u\wedge v) &\leq \Big(\esssup_{B(\cdot,\eps)\cap\Omega} u\Big) \wedge \Big(\esssup_{B(\cdot,\eps)\cap\Omega} v\Big),
\end{align*} 
the reverse analogues for the $\essinf$, and the elementary identity $a\vee b + a \wedge b = a+b $ for numbers $a,b\in\R$. 
\end{proof}

\section{Convergence of the adversarial training scheme}\label{sec:convergence}

For a set $A\in\B(\Omega)$, let us define the \emph{one-step operator} $S_\eps(A)$ of the adversarial training scheme \labelcref{eq:selection} via
\begin{align}\label{eq:selection_operator}
	S_\eps(A)
	:= 
	\{w_\eps^\ast > 0\}
	\quad 
	\text{where}
	\quad
	w_\eps^\ast:=
	\argmin_{u\in L^2(\Omega)} 
	\frac{1}{2\eps}
	\int_\Omega 
	\abs{u - \sdist(\cdot,A^c)}^2
	\de\rho
	+
	\TV_\eps(u).
\end{align}
For convenience, we assume that $w^*_\eps$ is a Lebesgue representative with ${\{w^*_\eps > 0\} = (\{w^*_\eps > 0\})^1}$, where we recall the notation introduced in \labelcref{eqn:sdist}. In particular, with this representative convention in place, one can verify by hand that 
$$\sdist(\cdot,\{w^*_\eps > 0\}^c) = \dist(\cdot, \{w^*_\eps > 0\}^c) - \dist(\cdot, \{w^*_\eps > 0\}), $$
circumventing the need for a well-chosen representative in $\sdist$ (see \labelcref{eqn:sdist}) for another application of $S_\eps$.

In \cref{prop:select_princi} below, we will first prove that the operator $S_\eps$ does in fact select a solution of \labelcref{eq:minimizing_movement}; in other words, the convex minimization problem arising in \labelcref{eq:selection} is consistent with the scheme \labelcref{eq:minimizing_movement}. Second, to prove \cref{thm:main}, we will connect the operator \labelcref{eq:selection_operator} to the limiting equation by showing that the operator is {monotone} and {consistent} with respect to weighted mean curvature flow in the following sense.
\begin{definition}[Monotonicity]\label{def:monotone}
The operator $S_\eps$ defined in \labelcref{eq:selection_operator} is \emph{monotone} if
 ${A' \subset A \subset \Omega}$ implies $S_\eps(A')\subset S_\eps(A)$.
\end{definition}
While monotonicity is a property of the operator by itself, consistency, on the other hand, directly connects the scheme to mean curvature flow. 

For consistency, we rely on the notions of sub- and superflows typically used to construct barrier solutions for mean curvature flow, see for example~\cite[Chapter 9]{bellettini_book}. 
If $[t_0,t_1]\ni t\mapsto A(t)\subset\subset \Omega$ is a smooth curve of smooth sets which evolve with normal speed $V(t) = -\tfrac{1}{\rho}\div\left(\rho \nu_{A(t)}\right)$, where $\nu_{A(t)}$ is the inner normal vector to the boundary of $A(t)$, i.e., as in \labelcref{eq:normal_velocity}, then the signed distance function $d(x,t) := \sdist(x,A^c(t))$ satisfies
\begin{align}\label{eq:distance_fct_PDE}
    \partial_t d(x,t) = \frac{1}{\rho(x)}\div\left(\rho(x)\grad d(x,t)\right)
\end{align}
for any $(x,t)$ with $d(x,t)=0$.
This is because $\nu_{A(t)} = \grad d(x,t)$.
The PDE \labelcref{eq:distance_fct_PDE} forms the basis of our sub- and superflow definitions for weighted mean curvature flow, adapted from \cite[Definition 2.1]{chambolle2007approximation}.
Informally, a superflow is a smooth evolution of sets that moves strictly faster than mean curvature flow, while a subflow moves slower. We emphasize that our meaning is the same as in other works, but the inequalities are reversed as the gradient of the signed distance function $\sdist(x,A^c)$ points into the evolving set (more consistent with $BV$-solution concepts).
\begin{definition}[Sub- and superflows]\label{def:superflow}
Let $A(t)\subset\subset \Omega$, $t \in [t_0, t_1]$.
We say that $A(t)$ is a \emph{subflow} of \labelcref{eq:distance_fct_PDE} if 
\begin{itemize}
    \item there exists a relatively open set $B \subset \Omega\times [t_0,t_1]$ with $\bigcup_{t_0\leq t\leq t_1}\partial A(t) \times \{t\} \subset B $;

    \item the function $d(x, t) := \sdist(x,A^c(t))$ is continuously differentiable in time and twice continuously differentiable in space in $B$, which we abbreviate as $d\in C^{2,1}_{x,t}(B)$;
    \item there exists $\delta>0$ such that 
    \begin{align}\label{ineq:superflow}
        \partial_t d(x,t) \geq \frac{1}{\rho(x)}\div\left(\rho(x)\grad d(x,t)\right) + \delta
    \end{align}
    for any $(x,t) \in  B$. 
\end{itemize}
We say that $A(t)$ is a \emph{superflow} whenever $\delta < 0$ and
the reverse inequality holds in \labelcref{ineq:superflow}.
\end{definition}
\begin{definition}[Consistency]\label{def:consist}
    The operator $S_\eps$ defined in \labelcref{eq:selection_operator} is \emph{consistent} if
    \begin{itemize}
        \item for every subflow $[t_0,t_1]\ni t\mapsto A(t)$ in the sense of \cref{def:superflow} there exists $\eps_0>0$ such that $S_\eps(A(t))\subset A(t+\eps)$ for all $0<\eps<\eps_0$ and all $t\in[t_0,t_1-\eps]$;
        \item for every superflow the same holds with the converse inclusion.
    \end{itemize}
\end{definition}
The interpretation of this definition is that for $\eps>0$ sufficiently small the scheme $S_\eps(A)$ defined in \labelcref{eq:selection_operator} moves faster than a subflow and slower than a superflow starting at $A$. With these definitions in hand, we may state the principle result of this paper.
\begin{theorem}[Monotonicity and consistency]\label{thm:monotone and consistent}
If $\Omega\subset\R^N$ is a bounded and convex domain the operator $S_\eps$ is monotone and consistent with the weighted mean curvature flow \labelcref{eq:normal_velocity} in the sense of \cref{def:monotone,def:consist}, respectively.
\end{theorem}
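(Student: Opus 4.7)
My plan is to derive monotonicity from the submodularity of $\TV_\eps$ (\cref{lem:submodularity}) via a Chambolle-style comparison. Given $A' \subset A \subset \Omega$, the signed distance functions obey $f' := \sdist(\cdot, (A')^c) \leq \sdist(\cdot, A^c) =: f$ pointwise. Let $w', w$ denote the corresponding minimizers of \labelcref{eq:selection_operator}, which are unique by strict convexity of the fidelity term. Summing the two minimality inequalities obtained by testing $w'$ against the competitor $w' \wedge w$ and $w$ against $w' \vee w$, and absorbing $\TV_\eps(w' \wedge w) + \TV_\eps(w' \vee w)$ into $\TV_\eps(w') + \TV_\eps(w)$ via submodularity, all inequalities must reduce to equalities on $\{w' > w\}$. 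Uniqueness of minimizers then forces $w' \wedge w = w'$, i.e., $w' \leq w$ almost everywhere, so $\{w' > 0\} \subset \{w > 0\}$.

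\textbf{Consistency.} I focus on subflows; superflows are symmetric. Let $A(\cdot)$ be a subflow with slack $\delta > 0$ and tube $B$, let $d_\tau := \sdist(\cdot, A^c(\tau))$, and let $w_\eps^*$ be the minimizer of \labelcref{eq:selection_operator} with data $d_t$. My aim is the pointwise bound $w_\eps^* \leq d_{t+\eps}$ near $\partial A(t+\eps)$, from which $\{w_\eps^* > 0\} \subset A(t+\eps)$ follows. Since $d_{t+\eps} \in C^2$ with $|\nabla d_{t+\eps}| = 1$ inside the tube, I would extend it to a function $\tilde w \in C^2(\closure\Omega)$ with $|\nabla \tilde w| \geq c > 0$ globally and $\tilde w = d_{t+\eps}$ on a neighborhood of $\partial A(t+\eps)$. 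Then \cref{prop:density_of_pushforward} furnishes a subgradient $\tilde p \in \partial\TV_\eps(\tilde w)$ valid against test functions supported in $\Omega_{2\eps}$, and \cref{prop:consistency} identifies $\tilde p = -\div(\rho \nabla \tilde w/|\nabla \tilde w|) + o(1)$ uniformly. Replicating the submodularity-plus-comparison argument from monotonicity---now testing the minimality of $w_\eps^*$ against $w_\eps^* \wedge \tilde w$ and the subgradient inequality at $\tilde w$ against $w_\eps^* \vee \tilde w$---produces
\begin{align*}
    \int_\Omega \rho\, \frac{\bigl((w_\eps^* - \tilde w)^+\bigr)^2}{\eps} \de x + \int_\Omega \left[\tilde p + \rho\,\frac{\tilde w - d_t}{\eps}\right] (w_\eps^* - \tilde w)^+ \de x \leq 0.
\end{align*}
The Taylor expansion $(\tilde w - d_t)/\eps = \partial_t d(\cdot, t) + o(1)$, the subflow inequality \labelcref{ineq:superflow}, and \cref{prop:consistency} then combine to give $\tilde p + \rho(\tilde w - d_t)/\eps \geq \rho\delta + o(1) > 0$ in the tube for $\eps$ small. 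With both integrands nonnegative, the displayed inequality forces $(w_\eps^* - \tilde w)^+ \equiv 0$ on the tube, whence $w_\eps^* \leq d_{t+\eps}$ near $\partial A(t+\eps)$.

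\textbf{Main obstacle.} The principal technical difficulty lies in the localization: the subgradient inequality of \cref{prop:density_of_pushforward} is only valid for test functions supported in $\Omega_{2\eps}$, whereas $(w_\eps^* - \tilde w)^+$ is not a priori compactly supported, and the sign argument for $\tilde p + \rho(\tilde w - d_t)/\eps$ is only justified inside the tube $B$. My plan for resolving both issues is twofold: (a) engineer the global extension $\tilde w$ so that it strictly dominates $d_{t+\eps}$ outside the tube, and (b) employ the ``almost Lipschitz'' control on $w_\eps^*$ from \cref{cor:almost-Lip} together with cone-type barriers and the already-established monotonicity of $S_\eps$ to confine the set $\{w_\eps^* > \tilde w\}$ to the tube $B$. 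Since the initial Bayes classifier from \cref{thm:main} lies compactly in $\Omega$, the tube $B$ is contained in $\Omega_{2\eps}$ for small $\eps$, so the subgradient inequality applies and the proof closes. The $o(1)$ errors from \cref{prop:consistency} and the Taylor expansion are absorbed into the strictly positive subflow slack $\delta$.
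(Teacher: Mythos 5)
Your monotonicity argument is exactly the paper's: submodularity of $\TV_\eps$ plus the Chambolle-style comparison (this is \cref{prop:comparison_one} applied to the ordered signed distance functions), so that part is fine.

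The consistency part has a genuine gap at its foundation. You propose to extend $d_{t+\eps}$ to $\tilde w \in C^2(\closure\Omega)$ with $\abs{\nabla \tilde w}\geq c>0$ \emph{globally} and $\tilde w = d_{t+\eps}$ near $\partial A(t+\eps)$, in order to invoke \cref{prop:density_of_pushforward} on $\Omega$. No such extension exists: since $A(t+\eps)\subset\subset\Omega$, the restriction of $\tilde w$ to $\closure{A(t+\eps)}$ vanishes on $\partial A(t+\eps)$ and is positive just inside, so it attains its maximum at an interior point, where $\nabla\tilde w = 0$. This topological obstruction is precisely why the paper never works with a global barrier. Instead it localizes the entire variational problem to a tubular neighborhood $\tube=\{\abs{d(\cdot,t)}<r\}$ of the interface, where $\abs{\nabla d}=1$: \cref{lem:restricted_minimizer} shows $w_\eps^*$ is still a minimizer of the restricted energy with its own boundary data frozen on $\tube\setminus\tube_\eps$, and \cref{prop:comparison_two} gives a comparison principle on $\tube$ \emph{provided} the barrier dominates $w_\eps^*$ on $\tube\setminus\tube_\eps$. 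Establishing those ordered boundary values is the second place where your sketch is incomplete: on the inner part of $\partial\tube$ one only knows $w_\eps^*\leq\norm{d(\cdot,t)}_{L^\infty}\leq\operatorname{diam}\Omega$, so the barrier must be inflated there (the paper takes $v_\eps=\psi(d(\cdot,t+\eps))$ with $\psi(3r/4)\geq\operatorname{diam}\Omega$, $\psi'\geq c$, $\psi(s)=s$ near $0$), while on the outer part one needs the quantitative cone barrier of \cref{lem:cone} to show $w_\eps^*\leq -3r/4$; the same cone estimate is also needed to control $\{w_\eps^*>0\}$ \emph{outside} the tube, which your reduction to ``$w_\eps^*\leq d_{t+\eps}$ near $\partial A(t+\eps)$'' silently omits. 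Your ``main obstacle'' paragraph correctly identifies localization as the crux, but items (a) and (b) do not resolve it: (a) is impossible as stated, and (b) presupposes confinement results that themselves require the tube-localized comparison machinery. The interior computation (completing the square, using \cref{prop:consistency} and the subflow slack $\delta$ to absorb the $o(1)$ errors) is correct and matches the paper once the problem has been properly localized.
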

\cref{thm:monotone and consistent} will follow directly follow from \cref{prop:monotone,prop:consist} below.
We briefly defer the proofs of the aforementioned selection principle and \cref{thm:monotone and consistent}, as we can now directly conclude \cref{thm:main}.

\begin{proof}[Proof of \cref{thm:main}]
Let $A_0$ and $t\mapsto A(t)$ be as in the hypothesis of the theorem and recall $t\mapsto A_\eps(t)$ defined in \labelcref{eqn:min_mov_param}. Then let $t \mapsto A_{\rm sub}(t)$ be any subflow with initial condition $ A_0 \subset A_{\rm sub}(0)$ and parameter $\delta>0$. 
First, we will show that for any time $t$ before the singular time of $ A_{\rm sub}$, we have that 
\begin{equation}\label{eqn:evoContain}
    A_\eps (t) \subset A_{\rm sub}(\eps \lfloor t/\eps\rfloor).
\end{equation} 
for all $\epsilon$ sufficiently small. 
Similarly, the converse set containment holds for a superflow $A_{\rm sup}$ leading to
\begin{equation}\label{eqn:evoContain_2sides}
    A_{\rm sup}(\eps \lfloor t/\eps\rfloor) \subset A_\eps (t) \subset A_{\rm sub}(\eps \lfloor t/\eps\rfloor) \quad \text{ for all $\eps>0$ sufficiently small}.
\end{equation}
It turns out \labelcref{eqn:evoContain} is a simple consequence of \cref{thm:monotone and consistent}. 
Let $(A_k)_{k\in \N_0}$ be the sets in the definition of $A_\eps$ in \labelcref{eqn:min_mov_param} coming from iteratively applying the scheme. By monotonicity and consistency we have 
$$A_1 = S_\eps(A_0) \subset S_\eps(A_{\rm sub}(0)) \subset  A_{\rm sub}(\eps).$$
 Applying $S_\eps$ to both sides once again, using monotonicity and consistency, we have
$$A_2 = S_\eps(A_1) \subset S_\eps(A_{\rm sub}(\eps)) \subset A_{\rm sub}(2\eps).$$
Iterating and recalling the definition of $A_\eps$ in \labelcref{eqn:min_mov_param}, we conclude \labelcref{eqn:evoContain} and hence also \labelcref{eqn:evoContain_2sides}. 

Consequently, using \labelcref{eqn:evoContain_2sides} and letting $T$ be the earliest singular time of $A_{\rm sub}$ and $A_{\rm sup}$, we estimate for any $s<T$ that
\begin{equation}\label{eqn:sandwich_ineq}
\limsup_{\eps \to 0}\sup_{t\in [0,s]}\mathcal{L}^N(A_{\eps}(t)\triangle A(t))\leq \sup_{t\in [0,s]}\left(\mathcal{L}^N(A_{\rm sub}(t)\triangle A(t))+ \mathcal{L}^N(A_{\rm sup}(t)\triangle A(t)) \right), 
\end{equation}
where we have used continuity of the sub- and superflows to replace $\eps \lfloor t/\eps\rfloor$ with $t$.
Similarly, one can get an estimate for the Hausdorff distance $d_H(A,B):=\sup_{x\in A}\dist(x,B)\vee\sup_{x\in B}\dist(x,A)$.
Using \labelcref{eqn:evoContain_2sides} again we have
\begin{align}
    \nonumber
    \limsup_{\eps \to 0}\sup_{t\in [0,s]}
    d_H(A_{\eps}(t),A(t))
    &\leq 
    \sup_{t\in [0,s]}
    \sup_{y\in A_{\rm sub}(t)}d(x,A(t)) \vee \sup_{x\in A(t)}\dist(x,A_{\rm sup}(t))
    \\
    \label{eqn:sandwich_ineq_hausdorff}    
    &\leq 
    \sup_{t\in [0,s]}
    d_H(A_{\rm sub}(t),A(t))
    +
    d_H(A_{\rm sup}(t),A(t)).
\end{align}
It remains to argue that the right hand side of \labelcref{eqn:sandwich_ineq,eqn:sandwich_ineq_hausdorff} can be made arbitrarily small by approximating $t\mapsto A(t)$ by sub- and superflows.
Briefly, let $d(x,t)  = \sdist(x,A_{\rm sub}^c(t))$. Note that if $\partial_t d = \frac{1}{\rho}\div(\rho \nabla d) + \delta$ on $\partial A_{\rm sub}(t)$, then as a curvature flow this may be written as 
\begin{equation}\label{eqn:forcedFlow}
V_{\rm sub}(t) = -\frac{1}{\rho}\div\left(\rho \nu_{A_{\rm sub}(t)}\right) -\delta
    = H_{A_{\rm sub}(t)} - \nabla\log\rho\cdot\nu_{A_{\rm sub}(t)} - \delta
 \quad \text{ on }\partial A_{\rm sub}(t),
\end{equation} 
following the convention of \labelcref{eq:normal_velocity}. As \labelcref{eqn:forcedFlow} is a perturbation of \labelcref{eq:normal_velocity}, one can show that if \labelcref{eq:normal_velocity} has a strong solutions up to time $T_*$, then for any $T<T^*$, there is $\delta_T>0$ sufficiently small such that the flow \labelcref{eqn:forcedFlow} has a strong solution up to time $T$ for all $|\delta|<\delta_T$.
With this in mind, we see that the right-hand side of \labelcref{eqn:sandwich_ineq,eqn:sandwich_ineq_hausdorff} can be made arbitrarily small by choosing sub- and superflows satisfying the inequalities of \cref{def:superflow} with equality on the interface and then sending $\delta\to 0$ (in the definition of sub-/superflow one must replace $\delta$ by $\delta/2$ to get the neighborhood $B$); at the same time this will allow one to take $T \to T_*$, concluding the theorem. 
\end{proof}

\subsection{Well-posedness}

We first show that the optimization problem in \labelcref{eq:selection_operator} has a unique solution, up to equality on Lebesgue null-sets.
This follows from the following more general statement.
\begin{proposition}\label{prop:existence_and_Lipschitz}
    For any $f\in L^2(\Omega)$ there exists a unique element $w\in L^2(\Omega)$ such that
    \begin{align*}
	\frac{1}{2\eps}
	\int_\Omega 
	\abs{w-f}^2
	\de\rho
	+
	\TV_\eps(w)
        =
        \inf_{u\in L^2(\Omega)} 
	\frac{1}{2\eps}
	\int_\Omega 
	\abs{u - f}^2
	\de\rho
	+
	\TV_\eps(u).
    \end{align*}
    Furthermore, if $f\in L^\infty(\Omega)$ then $w\in L^\infty(\Omega)$ with $\norm{w}_{L^\infty(\Omega)}\leq\norm{f}_{L^\infty(\Omega)}$.
\end{proposition}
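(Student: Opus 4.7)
The plan is to apply the direct method of the calculus of variations for existence, strict convexity for uniqueness, and a truncation argument based on the coarea formula for the $L^\infty$-estimate.

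For \textbf{existence}, I would start from a minimizing sequence $(u_n)\subset L^2(\Omega)$ and extract a bound using the fidelity term: since $\rho \ge c_\rho > 0$ by \cref{ass:densities}, the inequality
\[
\tfrac{c_\rho}{2\eps}\|u_n-f\|_{L^2(\Omega)}^2 \le \tfrac{1}{2\eps}\int_\Omega |u_n-f|^2\de\rho \le C
\]
yields an $L^2$-bound on $u_n$. Passing to a non-relabelled subsequence, $u_n \rightharpoonup w$ in $L^2(\Omega)$. The fidelity term is convex and continuous, hence weakly lower semi-continuous, while weak lower semi-continuity of $\TV_\eps$ is exactly \cref{lem:TV_lsc}. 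Together this makes the whole functional weakly lower semi-continuous, so $w$ attains the infimum.

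For \textbf{uniqueness}, I would note that $u \mapsto \tfrac{1}{2\eps}\int_\Omega |u-f|^2\de\rho$ is strictly convex on $L^2(\Omega)$ (because $\rho>0$ a.e.), and $\TV_\eps$ is convex by its definition as a supremum/infimum based integral functional (this convexity being the content of the proof of \cref{lem:TV_lsc}, or verifiable directly). The sum of a strictly convex and a convex functional is strictly convex, which forces uniqueness of the minimizer.

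For the \textbf{$L^\infty$-bound}, set $M := \|f\|_{L^\infty(\Omega)}$ and define the truncation $\tilde w := (w \wedge M)\vee(-M)$. The key observation is a pointwise inequality $|\tilde w - f| \le |w-f|$, which makes the fidelity term non-increase. To control the total variation, I would invoke the coarea formula (\cref{lem:coarea}) together with $\Per_\eps(\emptyset)=\Per_\eps(\Omega)=0$ and the fact that $\{\tilde w > t\} = \{w > t\}$ for $t\in[-M,M)$, while $\{\tilde w>t\}$ is empty or all of $\Omega$ outside that range:
\[
\TV_\eps(\tilde w) = \int_{-M}^{M}\Per_\eps(\{w>t\})\de t \le \int_\R \Per_\eps(\{w>t\})\de t = \TV_\eps(w).
\]
Therefore $\tilde w$ is also a minimizer, and uniqueness forces $\tilde w = w$ a.e., i.e., $\|w\|_{L^\infty(\Omega)}\le M$.

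The only step that requires any care is the coarea argument for the truncation: one must verify that the super-level set version of \cref{lem:coarea} treats the endpoints correctly and that $\Per_\eps$ vanishes on $\emptyset$ and $\Omega$. Both follow immediately from the definitions, so I do not anticipate any genuine obstacle.
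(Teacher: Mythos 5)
Your proof is correct and follows the same skeleton as the paper's: direct method with the weak lower semicontinuity of \cref{lem:TV_lsc} for existence, strict convexity of the fidelity term plus convexity of $\TV_\eps$ for uniqueness, and truncation at $\pm\norm{f}_{L^\infty}$ combined with uniqueness for the $L^\infty$-bound. The single point of divergence is how you justify $\TV_\eps(\tilde w)\leq \TV_\eps(w)$: the paper checks this via the submodularity inequality of \cref{lem:submodularity}, applied with the constant functions $\pm M$ (whose $\TV_\eps$ vanishes, so $\TV_\eps(w\wedge M)\leq\TV_\eps(w)$ and likewise for $\vee(-M)$), whereas you use the coarea formula of \cref{lem:coarea}. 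Your route is equally valid: the superlevel sets of $\tilde w$ are $\{w>t\}$ for $t\in[-M,M)$ and $\emptyset$ or $\Omega$ otherwise, $\Per_\eps(\emptyset)=\Per_\eps(\Omega)=0$, and $\Per_\eps\geq 0$, so the inequality follows. Submodularity is marginally cleaner since it avoids bookkeeping of superlevel sets and endpoint cases, but both arguments are one-liners and neither has a gap.
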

\begin{proof}
    We define the functional $E:L^2(\Omega)\to[0,\infty]$ via
    \begin{align*}
        E(u) := \frac{1}{2\eps}
	\int_\Omega 
	\abs{u - f}^2
	\de\rho
	+
	\TV_\eps(u).
    \end{align*}
    Thanks to \cref{lem:subdiff} the functional $u\mapsto\TV_\eps(u)$ is convex and hence $E$ is strictly convex. 
    This implies uniqueness. Existence of the minimizer $w$ is a consequence of lower semi-continuity of the functional (\cref{lem:TV_lsc}) and the direct method. 
    For the claimed $L^\infty$-bound we note that we can replace $w$ by the truncation $\hat w := (-C)\vee w\wedge C$ with $C:=\norm{f}_{L^\infty(\Omega)}$ which satisfies $E(\hat w)\leq E(w)$ (as may be directly checked using \cref{lem:submodularity}) and therefore by uniqueness it holds that $\hat w = w$, and the bounds for $w$ follow.
\end{proof}

\subsection{Selection property}

Next we show that $S_\eps$ selects a solution of \labelcref{eq:minimizing_movement}. Our proof (lightly) deviates from that of \cite[Proposition 2.2]{chambolle2004algorithm} for the standard ${\rm TV}$ functional due to the asymmetry of $\TV_\eps$ with respect to super- and sublevel sets. In particular, we have $\Per_\eps(A) \neq \Per_\eps(A^c)$.

\begin{proposition}[Selection principle]\label{prop:select_princi}
Letting $S_\eps$ be defined as in \labelcref{eq:selection_operator}, it holds that 
\begin{align*}
    S_\eps(A) \in \argmin_{E\in\B(\Omega)}
    \int_\Omega \abs{\one_E-\one_A}\frac{\dist(\cdot,\partial A)}{\eps}\de\rho 
    +
    \Per_\eps(A).
\end{align*}
\end{proposition}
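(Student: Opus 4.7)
I would adapt Chambolle's layer-cake argument to our setting. The core idea is to combine the coarea formula for $\TV_\eps$ (\cref{lem:coarea}) with a fundamental-theorem-of-calculus decomposition of the quadratic fidelity, so that the convex problem defining $w^*$ decouples into a superposition, over levels $s \in \R$, of set-valued problems. With $f := \sdist(\cdot, A^c)$ and $w\in L^\infty(\Omega)$ (recall \cref{prop:existence_and_Lipschitz}), writing $\tfrac{1}{2}(w(x)-f(x))^2 = \int_f^w (t-f(x))\,\de t$ and applying Fubini yields
\begin{align*}
\frac{1}{2\eps}\int_\Omega (w - f)^2\,\de\rho + \TV_\eps(w)
= \int_\R \left[\Per_\eps(\{w>s\}) + \frac{1}{\eps}\int_{\{w>s\}}(s - f)\,\de\rho \right]\de s + C(f),
\end{align*}
where $C(f)$ is independent of $w$. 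It therefore suffices to show that for every $s$ the superlevel set $\{w^* > s\}$ is a minimizer of
\begin{align*}
F_s(E) := \Per_\eps(E) + \frac{1}{\eps}\int_E (s - f)\,\de\rho,
\end{align*}
and then to specialize to $s = 0$.

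\textbf{Identification of the $s=0$ slice.} Because $f > 0$ on $A^1$, $f < 0$ on its complement, and $|f| = \dist(\cdot, \partial A)$ by construction, splitting $E \triangle A = (E \setminus A) \cup (A \setminus E)$ produces the sign identity $\int_{E \triangle A} |f|\,\de\rho = \int_A f\,\de\rho - \int_E f\,\de\rho$. Substituting into $F_0$ gives
\begin{align*}
F_0(E) = \Per_\eps(E) - \frac{1}{\eps}\int_E f\,\de\rho = \int_\Omega|\one_E - \one_A|\frac{\dist(\cdot,\partial A)}{\eps}\,\de\rho + \Per_\eps(E) - \frac{1}{\eps}\int_A f\,\de\rho,
\end{align*}
so $F_0$ matches the target functional modulo a constant in $E$. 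This also clarifies the conventions in \labelcref{eq:selection}: the asymmetry $\TV_\eps(-u)\neq\TV_\eps(u)$ forces the pairing of $\sdist(\cdot, A^c)$ with the superlevel set $\{w^*>0\}$ rather than the opposite.

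\textbf{Level-set minimality and the main obstacle.} To show $\{w^* > 0\}$ minimizes $F_0$, I would run a Chambolle-type ``swap'' argument: given a competitor $E\in\B(\Omega)$, define
\begin{align*}
\tilde w := (w^* \vee \delta)\,\one_E + (w^* \wedge (-\delta))\,\one_{E^c}
\end{align*}
for small $\delta > 0$. Its superlevel sets satisfy $\{\tilde w > s\} = E$ for $|s| < \delta$, $\{\tilde w > s\} = E \cap \{w^* > s\}$ for $s > \delta$, and $\{\tilde w > s\} = E \cup \{w^* > s\}$ for $s < -\delta$. Applying the submodularity of $\Per_\eps$ (\cref{lem:submodularity}) level-by-level inside the coarea decomposition above, and passing $\delta\to 0$, bounds the difference of the energies in \labelcref{eq:selection_operator} at $\tilde w$ and at $w^*$ by $F_0(E) - F_0(\{w^* > 0\}) + o(1)$; optimality of $w^*$ then forces $F_0(\{w^* > 0\}) \leq F_0(E)$. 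The principal technical difficulty I anticipate is closing this $\delta\to 0$ estimate in the nonlocal, orientation-sensitive setting: the levels $s > 0$ and $s < 0$ couple to $\rho_0$ and $\rho_1$ respectively through the $\esssup$/$\essinf$ terms of $\TV_\eps$, so the submodularity bookkeeping and the control of the transition zone $|s|<\delta$ must be handled carefully to ensure that the swap estimate actually closes with the correct sign.
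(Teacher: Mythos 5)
Your setup is sound: the layer-cake decomposition of the fidelity term combined with \cref{lem:coarea} does give
\begin{equation*}
\frac{1}{2\eps}\int_\Omega(w-f)^2\de\rho+\TV_\eps(w)=\int_\R F_s(\{w>s\})\de s+C(f),
\qquad F_s(E):=\Per_\eps(E)+\frac1\eps\int_E(s-f)\de\rho,
\end{equation*}
and your identification of $F_0$ with the target functional (the sign identity $\int_{E\triangle A}|f|\de\rho=\int_A f\de\rho-\int_E f\de\rho$) is exactly the computation in Step~3 of the paper's proof. The route you take from there, however, is the Chambolle--Caselles--Novaga ``every superlevel set solves the prescribed-curvature problem'' route, whereas the paper argues via the Euler--Lagrange inclusion: it sets $p:=-\frac{w^*_\eps-d}{\eps}\rho\in\partial\TV_\eps(w^*_\eps)$, shows $p\in\partial\Per_\eps(\{w^*_\eps>s\})$ for a.e.\ $s$ by combining the coarea formula with the layer-cake identity $\int_\Omega p\,w^*_\eps\de x=\int_\R\int_\Omega p\one_{E_s}\de x\de s$ and the pointwise bound $\int_\Omega p\one_{E_s}\de x\le\Per_\eps(E_s)$, and then upgrades to $s=0$ by right-continuity of $s\mapsto\one_{E_s}$ and lower semicontinuity of $\Per_\eps$.

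The gap is in your level-set-minimality step, and it is not mere bookkeeping. Minimality of $w^*$ gives $0\le\mathcal E(\tilde w_\delta)-\mathcal E(w^*)$, but $\tilde w_\delta=(w^*\vee\delta)\one_E+(w^*\wedge(-\delta))\one_{E^c}$ does \emph{not} converge to $w^*$ as $\delta\to0$: its superlevel sets equal $E\cap\{w^*>s\}$ for $s\ge\delta$ and $E\cup\{w^*>s\}$ for $s\le-\delta$, so the energy difference carries $O(1)$ contributions from all levels $|s|\ge\delta$, not an $o(1)$ remainder plus the slab $|s|<\delta$. Submodularity controls only the \emph{sum} $F_s(E\cap\{w^*>s\})+F_s(E\cup\{w^*>s\})\le F_s(E)+F_s(\{w^*>s\})$ at each fixed level, which does not telescope into the single-level inequality $F_0(\{w^*>0\})\le F_0(E)$; every variant of the swap (modifying above a threshold, below a threshold, or in a slab) yields only inequalities \emph{integrated} over $s$, from which the pointwise statement at $s=0$ does not follow. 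To close the argument you would need an additional mechanism: either the subdifferential characterization as in the paper, or the full Chambolle--Caselles--Novaga machinery (existence and lattice structure of minimizers of each $F_s$ --- itself nontrivial here, since for fixed $\eps$ a bound on $\Per_\eps$ gives no $L^1$-compactness --- a monotone selection $s\mapsto E_s$, strict convexity/uniqueness of $w^*$ to identify $E_s=\{w^*>s\}$ for a.e.\ $s$, and a one-sided limit $s\downarrow0$ using lower semicontinuity to reach the level $s=0$). As written, the key claim that $\{w^*>0\}$ minimizes $F_0$ is not established.
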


\begin{proof}

We may of course assume $A\neq \emptyset$ and $A\neq\Omega$.
We use the abbreviation ${d(x):=\sdist(x,A^c)}$ and recall that $|d(x)| = \dist(x,\partial A)$ following the notation introduced after \labelcref{eq:minimizing_movement}.
As the $L^2$-fidelity term and the $\TV_\eps$ functional in the minimization problem \labelcref{eq:selection_operator} are both convex and lower semi-continuous, we have that $$ 0 \in \frac{w^*_\eps - d}{\eps}\rho + \partial \TV_\eps (w^*_\eps) .$$
We define $p:= -\frac{w^*_\eps - d}{\eps}\rho \in \partial \TV_\eps (w^*_\eps)$ and $E_s:= \{w^*_\eps > s\}$. 

\textit{Step 1 (Subdifferential of the super-level sets).}
We \textbf{claim} that for almost every $|s| \leq{\rm diam}(\Omega) =: C$, $$p \in \partial \TV_\eps(\one_{E_s}) =: \partial \Per_\eps(E_s).$$ 
We first note that by the $\eps$-coarea formula in \cref{lem:coarea} and that $\|w_\eps^*\|_{L^\infty}\leq C$ by \cref{prop:existence_and_Lipschitz}, we have
$$\TV_\eps (w_\eps^*) = \int_{-C}^C \Per_\eps(E_s) \de s.$$
Similarly, by the layer cake formula and Fubini's theorem
$$\int_\Omega p w_\eps^* \de x = \int_\Omega p(x)\left(\int_{-C}^C \one_{E_s}(x) \de s - C \right) \de x  =\int_{-C}^C\int_\Omega p(x) \one_{E_s}(x)\de x \de s ,$$
where in the last equality we recall that $\int_\Omega p \de  x = 0 $ as noted immediately after \labelcref{eq:subdiff_TV}. 
By \cref{lem:subdiff}, we have $\int_\Omega p w_\eps^* \de x = \TV_\eps (w_\eps^*)$, so that we may combine the above displays to find that
\begin{equation}\label{eqn:fubini+coarea}
\int_{-C}^C \Per_\eps(E_s) \de s =  \int_{-C}^C\int_\Omega p(x) \one_{E_s}(x)\de x \de s.
\end{equation}
Once again by the characterization of the subdifferential in \cref{lem:subdiff}, we have $\int_\Omega p \one_{E_s}\de x \leq \Per_\eps(E_s)$, so that \labelcref{eqn:fubini+coarea} implies 
\begin{equation}\label{eqn:perimBound}
\Per_\eps(E_s) = \int_\Omega p \one_{E_s}\de x
\end{equation}
for almost every $s$ with $|s|\leq C$. 
Applying the characterization \labelcref{eq:subdiff_char} of the subdifferential gives the claim.

\textit{Step 2 (Subdifferential for $\{w_\eps^*>0\}$).}
The claim of Step 1 can be improved to every $s \in [-C,C)$: Fixing such an $s$ and taking a sequence $s_k \downarrow s$ such that \labelcref{eqn:perimBound} holds for each $s_k$, we have that $\one_{E_{s_k}} \to \one_{E_s}$ pointwise and thereby in $L^1$. We pass to the limit in \labelcref{eqn:perimBound} as $k\to \infty$ using lower semi-continuity of $\Per_\eps$, the $L^1$ convergence, and that $p \in \partial \TV_\eps(w_\eps^*)$ (for the last inequality below) to find
$$\Per_\eps(E_s) \leq \int_\Omega p \one_{E_s}\de x \leq  \Per_\eps(E_s) .$$
Thus $p \in \partial \Per_\eps(E_s)$ for any $s\in [-C,C)$, and in particular $p \in \partial \Per_\eps(E_0) = \partial \Per_\eps(\{w_\eps^*>0\}).$

\textit{Step 3 (Conclusion).}
We apply the definition of subdifferential at $E_0$ for any set $E$ to find that
\begin{equation}\label{eqn:perimSubdiff}
\Per_\eps(E_0) + \int_\Omega p \left(\one_E - \one_{E_0}\right) \de x \leq \Per_\eps(E) .
\end{equation} Noting by definition of $p=\frac{d-w_\eps^*}{\eps}\rho$ that 
$$ \int_\Omega p \left(\one_E - \one_{E_0}\right) \de x 
=
\frac{1}{\eps}\int_\Omega d \left(\one_E - \one_{E_0}\right) \de\rho
-
\frac{1}{\eps}
\int_\Omega 
w_\eps^*(\one_E - \one_{E_0})\de\rho
\geq \frac{1}{\eps}\int_\Omega d \left(\one_E - \one_{E_0}\right) \de\rho,$$
where we used that $w_\eps^*(\one_E - \one_{E_0})\leq 0$ almost everywhere, we can rearrange \labelcref{eqn:perimSubdiff} as
$$ \Per_\eps(E_0) + \frac{1}{\eps}\int_\Omega (-d) \one_{E_0} \de \rho \leq \Per_\eps(E) + \frac{1}{\eps}\int_\Omega (-d) \one_E \de\rho.$$
However, one can verify that $\int_\Omega \abs{\one_E-\one_A}\frac{\dist(\cdot,\partial A)}{\eps}\de\rho = \frac{1}{\eps}\int_\Omega(-d)\one_E \de \rho + \frac{1}{\eps}\int_\Omega d \one_A \de \rho,$ so that the previous display is equivalently written 
$$  \Per_\eps(E_0) + \int_\Omega \abs{\one_{E_0}-\one_A}\frac{\dist(\cdot,\partial A)}{\eps}\de\rho \leq \Per_\eps(E) + \int_\Omega \abs{\one_E-\one_A}\frac{\dist(\cdot,\partial A)}{\eps}\de\rho,$$ 
concluding the proposition.
\end{proof}

\subsection{Monotonicity}

For proving monotonicity we start with a simple comparison principle for solutions of the optimization problem in~\labelcref{eq:selection_operator}.
\begin{proposition}[Comparison principle I]\label{prop:comparison_one}
    For $d,d'\in L^\infty(\Omega)$ with $d^\prime\leq d$ almost everywhere in $\Omega$ assume that $w,w^\prime$ satisfy
\begin{align*}
	w^{(\prime)} = \argmin_{u\in L^2(\Omega)} 
	\frac{1}{2\eps}
	\int_\Omega 
	\abs{u - d^{(\prime)}}^2
	\de\rho
	+
	\TV_\eps(u).
\end{align*}
Then it holds $w^\prime\leq w$ almost everywhere in $\Omega$.
\end{proposition}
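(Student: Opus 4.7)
The plan is to use a lattice/truncation argument combining the submodularity of $\TV_\eps$ (\cref{lem:submodularity}) with a pointwise fidelity comparison, in the spirit of standard proofs of comparison principles for ROF-type denoising.

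\textbf{Step 1: Pointwise fidelity identity.} I would first show the pointwise inequality
\begin{align*}
    |(w\vee w') - d|^2 + |(w\wedge w') - d'|^2 \leq |w-d|^2 + |w' - d'|^2 \quad \text{a.e. in } \Omega.
\end{align*}
At a point where $w\geq w'$ both sides coincide. At a point where $w<w'$, the difference between the left and right-hand sides simplifies after expansion to $2(d-d')(w-w')$, which is nonpositive because $d-d'\geq 0$ and $w-w'<0$ there. Multiplying by $\rho/(2\eps)$ and integrating gives a fidelity submodularity inequality.

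\textbf{Step 2: Combining with submodularity of $\TV_\eps$.} Denoting by $E_d$ and $E_{d'}$ the two strictly convex functionals minimized by $w$ and $w'$ respectively, I would add the fidelity inequality from Step~1 to the submodularity inequality from \cref{lem:submodularity} to obtain
\begin{align*}
    E_d(w\vee w') + E_{d'}(w\wedge w') \leq E_d(w) + E_{d'}(w').
\end{align*}

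\textbf{Step 3: Uniqueness forces the inequality.} By minimality of $w$ and $w'$ we simultaneously have $E_d(w\vee w') \geq E_d(w)$ and $E_{d'}(w\wedge w') \geq E_{d'}(w')$. Combined with Step~2, these must hold with equality. Strict convexity of $E_d$ and $E_{d'}$ (via \cref{prop:existence_and_Lipschitz}) then gives $w\vee w' = w$ and $w\wedge w' = w'$ almost everywhere, i.e., $w'\leq w$ a.e.

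I do not expect any serious obstacle: the pointwise algebraic inequality in Step~1 is elementary, submodularity of $\TV_\eps$ is in hand from \cref{lem:submodularity}, and uniqueness of minimizers is already established. The only small subtlety is that $w\vee w'$ and $w\wedge w'$ are admissible competitors in $L^2(\Omega)$, which is immediate from $w,w'\in L^\infty(\Omega)$ via \cref{prop:existence_and_Lipschitz}.
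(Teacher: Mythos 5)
Your proof is correct, and its skeleton is the same as the paper's: test the optimality of $w$ and $w'$ against the competitors $w\vee w'$ and $w\wedge w'$ and cancel the total variation terms via the submodularity of \cref{lem:submodularity}. The difference lies in the endgame. The paper does not establish your pointwise Step~1 inequality; instead it expands the squares after the cancellation and reduces everything to the sign condition $0\leq \int_{\{w'>w\}}(w'-w)(d'-d)\de\rho$, concludes that $\{w'>w\}\cap\{d'<d\}$ is a null set, and then appeals to an external argument (Lemma~2.1 of Chambolle's paper) to upgrade this to $w'\leq w$ almost everywhere --- a step that is genuinely needed there, since the sign condition alone says nothing on $\{w'>w\}\cap\{d'=d\}$. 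Your route closes this gap internally: the pointwise fidelity inequality $|(w\vee w')-d|^2+|(w\wedge w')-d'|^2\leq |w-d|^2+|w'-d'|^2$ (which reduces to $2(d-d')(w-w')\leq 0$ on $\{w<w'\}$, as you compute) yields $E_d(w\vee w')+E_{d'}(w\wedge w')\leq E_d(w)+E_{d'}(w')$, and minimality forces equality in both summands, whence strict convexity and uniqueness of the minimizer (\cref{prop:existence_and_Lipschitz}) give $w\vee w'=w$ outright. This is self-contained and arguably cleaner; the only price is that it leans on uniqueness of the minimizer, which the paper has already established, so nothing is lost.
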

\begin{proof}
Using optimality of $w$ and $w'$ we have
\begin{align}\label{eq:optimality}
	\begin{split}
	&\phantom{{}={}}
	\frac{1}{2\eps}
	\int_\Omega 
	\left(\abs{w - d}^2
	+
	\abs{w' - d'}^2\right)
	\de\rho
	+
	\TV_\eps(w)
	+
	\TV_\eps(w')
	\\
	&\leq 
	\frac{1}{2\eps}
	\int_\Omega 
	\left(\abs{w\vee w' - d}^2
	+
	\abs{w\wedge w' - d'}^2\right)
	\de\rho
	+
	\TV_\eps(w\vee w')
	+
	\TV_\eps(w\wedge w').
	\end{split}
\end{align}
Using \cref{lem:submodularity} to cancel the total variations we obtain from the above that
\begin{align*}
	\int_\Omega 
	\left(\abs{w - d}^2
	+
	\abs{w' - d'}^2\right)
	\de\rho
	\leq 
	\int_\Omega 
	\left(\abs{w\vee w' - d}^2
	+
	\abs{w\wedge w' - d'}^2\right)
	\de\rho.
\end{align*}
Expanding squares, canceling terms, and reordering this inequality, we reduce to
\begin{align*}
	0
	&\leq 
	\int_\Omega 
	\left(\left(w-(w\vee w')\right)d
	+
	\left(w'-(w\wedge w')\right)d'\right)
	\de\rho	
	\\
	&= 
	\int_{\Omega	\cap\{w'>w\}}
	\left(w'-w\right)
	\left(d'-d\right)
	\de\rho.
\end{align*}
Using that $\rho>c_\rho$ we infer that $\Omega\cap\{w'>w\}\cap\{d'<d\}$ has zero Lebesgue measure.
As in \cite[Lemma 2.1]{chambolle2004algorithm} one can argue that in fact $w'\leq w$ holds almost everywhere.
\end{proof}

With this comparison principle at hand, the proof of monotonicity for \cref{thm:monotone and consistent} is straightforward.

\begin{proposition}[Monotonicity]\label{prop:monotone}
The operator $S_\eps$ defined in \labelcref{eq:selection_operator} is monotone in the sense of \cref{def:monotone}.
\end{proposition}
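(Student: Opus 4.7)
The plan is to reduce monotonicity directly to the comparison principle in \cref{prop:comparison_one}, applied to the respective data $\sdist(\cdot,A^c)$ and $\sdist(\cdot,(A')^c)$.

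First, I would observe that if $A'\subset A\subset\Omega$ then the set inclusion lifts to an ordering of the signed distance functions, namely
\begin{align*}
    \sdist(\cdot,(A')^c)\leq \sdist(\cdot,A^c) \quad \text{a.e.\ in } \Omega.
\end{align*}
This is immediate from the definition $\sdist(\cdot,E)=\dist(\cdot,E^1)-\dist(\cdot,\Omega\setminus E^1)$: passing from $A$ to the smaller set $A'$ makes $(A')^1\subset A^1$ (up to null sets), so $\dist(\cdot,(A')^1)\geq\dist(\cdot,A^1)$ and $\dist(\cdot,\Omega\setminus (A')^1)\leq\dist(\cdot,\Omega\setminus A^1)$, and the ordering of the complements follows. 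Both functions are bounded (by $\operatorname{diam}\Omega$) and hence lie in $L^\infty(\Omega)$, so \cref{prop:comparison_one} is applicable.

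Let $w$ and $w'$ denote the unique minimizers of the variational problem in \labelcref{eq:selection_operator} associated to the data $\sdist(\cdot,A^c)$ and $\sdist(\cdot,(A')^c)$, respectively. By \cref{prop:comparison_one} we conclude $w'\leq w$ almost everywhere. In particular, $\{w'>0\}\setminus\{w>0\}$ has zero Lebesgue measure. Invoking the convention following \labelcref{eq:selection_operator} that both representatives are chosen so that $\{w>0\}=(\{w>0\})^1$ and $\{w'>0\}=(\{w'>0\})^1$, any point $x\in\{w'>0\}$ is a density-$1$ point of $\{w'>0\}$, hence (by the measure-zero symmetric difference) a density-$1$ point of $\{w>0\}$, and therefore belongs to $\{w>0\}$. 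This gives
\begin{align*}
    S_\eps(A')=\{w'>0\}\subset\{w>0\}=S_\eps(A),
\end{align*}
which is monotonicity in the sense of \cref{def:monotone}. The main (and in fact only) non-trivial input is \cref{prop:comparison_one}, which relies on submodularity (\cref{lem:submodularity}) together with the quadratic fidelity term; everything else here is a matter of unpacking definitions.
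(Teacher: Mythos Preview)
Your proof is correct and follows essentially the same approach as the paper: reduce to the comparison principle (\cref{prop:comparison_one}) via the ordering of signed distance functions, then pass from the almost-everywhere inequality $w'\leq w$ to the honest inclusion $\{w'>0\}\subset\{w>0\}$ using the density-one representative convention. The paper's write-up of the last step is slightly different in form (it writes the chain $\{w'>0\}=(\{w'>0\}\setminus\mathcal N)^1\subset(\{w>0\}\setminus\mathcal N)^1=\{w>0\}$), but the content is identical to your pointwise density argument.
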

\begin{proof}
Let $w'$ and $w$ denote the solutions of the problem in \labelcref{eq:selection_operator} for $A'$ and $A$, respectively. Since $A'\subset A$ we have $\sdist(\cdot,(A')^c)\leq\sdist(\cdot,A^c)$, and by \cref{prop:comparison_one}, it follows that $w'\leq w$ outside of a Lebesgue null-set $\mathcal{N}$. This immediately implies 
\begin{align*}
	S_\eps(A')
	=
	\{w'> 0\} = (\{w'>0\}\setminus \mathcal{N})^1
	\subset (\{w>0\}\setminus \mathcal{N})^1 = 
	\{w> 0\}
	=
	S_\eps(A),
\end{align*}
where we have used that $\{w^{(\prime)}>0\} = (\{w^{(\prime)}>0\})^1 $ by choice of representative in \labelcref{eq:selection_operator} and $(\{w^{(\prime)}>0\}\setminus \mathcal{N})^1 = (\{w^{(\prime)}>0\})^1$ holds for any Lebesgue null-set $\mathcal{N}$.
\end{proof}
\subsection{Consistency}

This subsection is devoted to the proof of consistency for \cref{thm:monotone and consistent}. Consistency connects the numerical scheme directly to mean curvature flow, and as such, will require a delicate analysis. Our approach is motivated by Chambolle and Novaga's in \cite{chambolle2007approximation} for an anisotropic but local $\TV$ functional. 

We briefly summarize the strategy of the proof. To show that the evolving set of a subflow stays outside the adversarial scheme, we show that the subflow for mean curvature flow can be modified to construct a subsolution for a static $\TV_\eps$ problem. To compare the modified subsolution to $w_\eps^*$ found using \labelcref{eq:selection_operator}, we will apply the variational comparison principle proven below in \cref{prop:comparison_two} below on a tubular neighborhood of the interface. 
For this to work, we must know that the modified subsolution is greater than $w_\eps^*$ on the boundary of the tubular neighborhood. This information comes from \cref{lem:cone} below. 
In fact, this lemma can be used to show that up to an error $O(\sqrt{\eps})$, the minimizer $w_\eps^*$ is Lipschitz continuous (see \cref{cor:almost-Lip}), and related estimates will allow us to recover the boundary conditions.

We first note that global minimizers give rise to local minimizers, so long as the boundary conditions are frozen on an $\eps$-neighborhood.

\begin{lemma}[Restricted minimizer]\label{lem:restricted_minimizer}
Let $d\in L^\infty(\Omega)$, and let $w\in L^\infty(\Omega)$ solve
\begin{align*}
    w = \argmin 
    \left\lbrace 
    \frac{1}{2\eps}
    \int_\Omega \abs{u-d}^2\de\rho
    +\TV_\eps(u)
    \st 
    u \in L^2(\Omega)
    \right\rbrace.
\end{align*}
Let $\Omega'\subset\Omega$ be an open subset and recall the notation in \labelcref{eqn:inner_parallel}.
Then it also holds that
\begin{align*}
    w = 
    \argmin 
    \left\lbrace 
    \frac{1}{2\eps}
    \int_{\Omega'} \abs{u-d}^2\de\rho
    +\TV_\eps(u;\Omega')
    \st 
    u \in L^2(\Omega'),\;
    u = w \text{ in }\Omega'\setminus \Omega'_\eps
    \right\rbrace,
\end{align*}
where $\TV_\eps(u;\Omega')$ is the total variation as defined in \labelcref{eq:TV} with $\Omega$ replaced by $\Omega'$.
\end{lemma}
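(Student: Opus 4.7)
I would follow the standard calculus-of-variations template: given any admissible $u\in L^2(\Omega')$ for the restricted problem (so $u=w$ on $\Omega'\setminus\Omega'_\eps$), extend it to $\Omega$ by setting $\tilde u:=u$ on $\Omega'$ and $\tilde u:=w$ on $\Omega\setminus\Omega'$, and transfer the global minimality of $w$ into the restricted minimality of $w|_{\Omega'}$. Uniqueness of the restricted minimizer would then follow from strict convexity, as in \cref{prop:existence_and_Lipschitz}.

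By the boundary condition, one has $\tilde u=w$ on all of $\Omega\setminus\Omega'_\eps$, so the perturbation of $w$ is genuinely supported in $\Omega'_\eps$. The fidelity term decomposes by locality:
\begin{align*}
\tfrac{1}{2\eps}\int_\Omega\abs{\tilde u - d}^2\de\rho - \tfrac{1}{2\eps}\int_\Omega\abs{w-d}^2\de\rho = \tfrac{1}{2\eps}\int_{\Omega'}\abs{u-d}^2\de\rho - \tfrac{1}{2\eps}\int_{\Omega'}\abs{w-d}^2\de\rho.
\end{align*}
For the nonlocal $\TV_\eps$ piece, I would split the outer integral into the three regions $\Omega\setminus\Omega'$, $\Omega'_\eps$, and the layer $\Omega'\setminus\Omega'_\eps$, using the geometric fact $\dist(\Omega'_\eps,\R^N\setminus\Omega')>\eps$ built into the definition \labelcref{eqn:inner_parallel}. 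On $\Omega\setminus\Omega'$, every ball $B(x,\eps)\cap\Omega$ is contained in $\Omega\setminus\Omega'_\eps$ where $\tilde u=w$, so the integrand at such $x$ is identical for $\tilde u$ and $w$. On $\Omega'_\eps$, the ball $B(x,\eps)$ lies fully in $\Omega'$, hence the global essential suprema and infima coincide with the restricted ones and the integrand matches that of $\TV_\eps(u;\Omega')$ at $x$.

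Combining the two regions with the fidelity identity yields
\begin{align*}
\bigl[E(\tilde u)-E(w)\bigr] - \bigl[E'(u)-E'(w|_{\Omega'})\bigr] = (\text{boundary-layer contribution from } \Omega'\setminus\Omega'_\eps),
\end{align*}
and once this boundary-layer contribution is shown to vanish, global minimality of $w$ immediately delivers $E'(u)\geq E'(w|_{\Omega'})$.

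The main obstacle is precisely this intermediate layer $\Omega'\setminus\Omega'_\eps$, where $B(x,\eps)$ straddles $\partial\Omega'$ and, a priori, $\esssup_{B(x,\eps)\cap\Omega}\tilde u=\esssup_{B(x,\eps)\cap\Omega'}u\vee\esssup_{B(x,\eps)\cap(\Omega\setminus\Omega')}w$ picks up values of $w$ outside $\Omega'$ that are invisible to $\esssup_{B(x,\eps)\cap\Omega'}u$. The plan is to exploit $u=w$ on $\Omega'\setminus\Omega'_\eps$ to argue that the ``outside'' quantity $\esssup_{B(x,\eps)\cap(\Omega\setminus\Omega')}w$ depends only on $w$ and enters the global integrands at $\tilde u$ and at $w$ in matching fashion, so that it drops out of the paired difference; the analogous bookkeeping with $\essinf$ against $\rho_1$ handles the second half of $\TV_\eps$. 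This is the step where the nonlocal range $\eps$ and the inner parallel set parameter $\eps$ interact, and it is where care must be taken to make the pairwise cancellation rigorous so that the reduction from global to restricted optimality goes through.
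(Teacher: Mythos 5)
Your proposal follows the same route as the paper's proof: extend the competitor $u$ by $w$ to all of $\Omega$, transfer global minimality, and account for the mismatch between $\TV_\eps(\cdot)$ and $\TV_\eps(\cdot;\Omega')$ by splitting the outer integration into $\Omega\setminus\Omega'$, $\Omega'_\eps$, and the layer $\Omega'\setminus\Omega'_\eps$. Your treatment of the first two regions is correct and is exactly the geometric input the paper uses (balls centered in $\Omega\setminus\Omega'$ never reach $\Omega'_\eps$, balls centered in $\Omega'_\eps$ stay in $\Omega'$), and you have correctly located the crux in the intermediate layer.

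However, the plan you give for the layer does not close. For $x\in\Omega'\setminus\Omega'_\eps$ write $a:=\esssup_{B(x,\eps)\cap\Omega'}u$, $a':=\esssup_{B(x,\eps)\cap\Omega'}w$ and $b:=\esssup_{B(x,\eps)\cap(\Omega\setminus\Omega')}w$ (the same $b$ for $\tilde u$ and $w$, as you observe). Since $\tilde u(x)=u(x)=w(x)$ on the layer, the contribution of such $x$ to $\bigl[E(\tilde u)-E(w)\bigr]-\bigl[E'(u)-E'(w)\bigr]$ in the $\rho_0$-part is $\bigl((a\vee b)-a\bigr)-\bigl((a'\vee b)-a'\bigr)=(b-a)^+-(b-a')^+$. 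The quantity $b$ does \emph{not} drop out of this paired difference, because it enters through a maximum rather than additively, and $a\neq a'$ in general: the ball $B(x,\eps)$ for $x$ in the layer reaches into $\Omega'_\eps$, where the constraint $u=w$ is not imposed, so the inner suprema of $u$ and of $w$ can differ. Whenever $b$ lies between $a$ and $a'$ the pointwise contribution is nonzero, and neither it nor its $\essinf$/$\rho_1$ analogue has a favorable sign. So the ``boundary-layer contribution'' you need to vanish does not vanish by the symmetry you invoke; an additional argument is required here. (Be aware that the paper's own proof is terse at exactly this point: it writes the difference $\TV_\eps(u)-\TV_\eps(u;\Omega')+\TV_\eps(w;\Omega')-\TV_\eps(w)$ directly as integrals over $\Omega\setminus\Omega'$ with balls intersected with $\Omega$, which silently discards these same layer terms, and then performs the cancellation you describe on $\Omega\setminus\Omega'$. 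So you have reproduced the paper's argument up to, and including, the step that genuinely needs justification; as written, your proposal does not supply it.)
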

\begin{proof}
    Let $u \in L^2(\Omega')$ be a function such that $u=w$ on $\Omega'\setminus \Omega'_\eps$. 
    We extend $u$ to a function in $L^2(\Omega)$ by setting $u:=w$ on $\Omega\setminus \Omega'$.
    Hence, using also the minimization property of $w$ it holds
    \begin{align*}
        \frac{1}{2\eps}&\int_{\Omega'}\abs{w-d}^2\de\rho + \TV_\eps(w;{\Omega'}) 
        - \left(
        \frac{1}{2\eps}\int_{\Omega'}\abs{u-d}^2\de\rho + \TV_\eps(u;{\Omega'})
        \right)
        \\
        &=
        \frac{1}{2\eps}\int_\Omega\abs{w-d}^2\de\rho + \TV_\eps(w;{\Omega'}) 
        - \left(
        \frac{1}{2\eps}\int_\Omega\abs{u-d}^2\de\rho + \TV_\eps(u;{\Omega'})
        \right)
        \\
        &\leq 
        \TV_\eps(u) - \TV_\eps(u;{\Omega'})
        +
        \TV_\eps(w;{\Omega'}) - \TV_\eps(w)
        \\
        &=
        \frac{1}{\eps}
        \int_{\Omega\setminus\Omega'}
        \bigg(\esssup_{B(x,\eps)\cap\Omega}
        u
        -u(x)\bigg)
        \de\rho_0(x)
        +
        \frac{1}{\eps}
        \int_{\Omega\setminus\Omega'}
        \bigg(u(x)
        -
        \essinf_{B(x,\eps)\cap\Omega}
        u\bigg)
        \de\rho_1(x)
        \\
        &\qquad
        -\frac{1}{\eps}
        \int_{\Omega\setminus\Omega'}
        \bigg(\esssup_{B(x,\eps)\cap\Omega}
        w
        -w(x)\bigg)
        \de\rho_0(x)
        -
        \frac{1}{\eps}
        \int_{\Omega\setminus\Omega'}
        \bigg(w(x)
        -
        \essinf_{B(x,\eps)\cap\Omega}
        w\bigg)
        \de\rho_1(x)
    \end{align*}
    Utilizing that $u=w$ on $\Omega\setminus \Omega'_\eps$ one easily sees that all terms in the right hand side of this inequality cancel which renders it equal to zero. 
    Therefore, since $u$ was arbitrary, $w$ is a minimizer as claimed.
\end{proof}

Next, we establish a comparison principle for solutions of the minimization problem in \labelcref{eq:selection_operator}.
Because the subdifferential of $\TV_\eps$ from \cref{lem:subdiff} is not a differential operator, we use variational instead of PDE techniques to prove this comparison principle.
\begin{proposition}[Comparison principle II]\label{prop:comparison_two}
Let $d\in L^\infty(\Omega)$, let $w\in L^\infty(\Omega)$ solve
\begin{align*}
    w = \argmin 
    \left\lbrace 
    \frac{1}{2\eps}
    \int_\Omega \abs{u-d}^2\de\rho
    +\TV_\eps(u)
    \st 
    u \in L^2(\Omega)
    \right\rbrace,
\end{align*}
and assume that $v\in L^\infty({\Omega'})$ satisfies
\begin{align*}
	\frac{1}{2\eps}
	\int_{\Omega'}
	\abs{v - d}^2
	\de\rho
	+
	\TV_\eps(v;{\Omega'})
	\leq 
        \frac{1}{2\eps}
	\int_{\Omega'}
	\abs{v \vee w - d}^2
	\de\rho
	+
	\TV_\eps(v \vee w;{\Omega'})
\end{align*}
for an open subset ${\Omega'}\subset\Omega$. 
If $v \geq w$ almost everywhere on $\Omega'\setminus \Omega'_\eps$ or if $\Omega'=\Omega$, then $v \geq w$ holds almost everywhere in ${\Omega'}$.
\end{proposition}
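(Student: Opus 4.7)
The proof plan is to combine two (sub-)minimality inequalities—the global minimality of $w$ tested against the competitor $v \wedge w$, and the hypothesized sub-minimality of $v$ tested against $v \vee w$—so that the parallelogram identity
\begin{align*}
    \abs{w-d}^2 + \abs{v-d}^2 = \abs{v \vee w - d}^2 + \abs{v \wedge w - d}^2
\end{align*}
cancels the $L^2$-fidelity contributions exactly. The resulting inequality for the total variation is then paired with submodularity (\cref{lem:submodularity}) to force equalities throughout, and strict convexity of the fidelity yields $v \wedge w = w$ by uniqueness.

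In the case $\Omega' = \Omega$, the function $v \wedge w$ is an admissible $L^2$-competitor for $w$, so \cref{prop:existence_and_Lipschitz} gives
\begin{align*}
    \frac{1}{2\eps}\int_\Omega \abs{w-d}^2 \de\rho + \TV_\eps(w) \leq \frac{1}{2\eps}\int_\Omega \abs{v \wedge w - d}^2 \de\rho + \TV_\eps(v \wedge w).
\end{align*}
Adding the hypothesized inequality for $v$ and canceling via the parallelogram identity reduces to $\TV_\eps(v) + \TV_\eps(w) \leq \TV_\eps(v \vee w) + \TV_\eps(v \wedge w)$; submodularity delivers the reverse inequality, forcing equality. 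In particular $v \wedge w$ is itself a global minimizer, and the uniqueness part of \cref{prop:existence_and_Lipschitz} yields $v \wedge w = w$ a.e., i.e., $v \geq w$ a.e.

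In the case $\Omega' \subsetneq \Omega$, the hypothesis $v \geq w$ a.e.\ on $\Omega' \setminus \Omega'_\eps$ gives $v \wedge w = w$ a.e.\ on $\Omega' \setminus \Omega'_\eps$, so that $v \wedge w$ is admissible for the restricted minimization problem from \cref{lem:restricted_minimizer} in which $w$ is a minimizer. This yields the local analogue
\begin{align*}
    \frac{1}{2\eps}\int_{\Omega'} \abs{w-d}^2 \de\rho + \TV_\eps(w;\Omega') \leq \frac{1}{2\eps}\int_{\Omega'} \abs{v \wedge w - d}^2 \de\rho + \TV_\eps(v \wedge w;\Omega'),
\end{align*}
and repeating the addition/cancellation argument with the localized $\TV_\eps(\cdot\,;\Omega')$—still submodular—forces equality and hence $v \wedge w = w$ a.e.\ on $\Omega'$, by uniqueness for the restricted problem (which comes from strict convexity of the fidelity among competitors matching $w$ on $\Omega' \setminus \Omega'_\eps$).

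The main subtlety is in the localized case: one needs $v \wedge w$ to be an admissible competitor for the restricted problem, and this is precisely what the boundary-style assumption $v \geq w$ on $\Omega' \setminus \Omega'_\eps$ provides. Submodularity of $\TV_\eps$, which substitutes here for the chain-rule/Euler--Lagrange comparisons available in the local PDE setting, is the crucial structural tool in both cases.
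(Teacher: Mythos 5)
Your proposal is correct and follows essentially the same route as the paper's proof: testing the (restricted) minimality of $w$ against $v\wedge w$, adding the hypothesized inequality for $v$, cancelling the fidelity terms via the pointwise identity for $\vee$ and $\wedge$, invoking submodularity of $\TV_\eps$, and concluding by strict convexity/uniqueness. The paper phrases the final step as a contradiction (assuming $v\wedge w\neq w$ on a set of positive measure yields a strict inequality), but this is logically the same as your ``forced equality plus uniqueness'' formulation, and you correctly identify the role of the boundary hypothesis in making $v\wedge w$ admissible for the restricted problem.
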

\begin{proof}
Let us first abbreviate the energy $E(u):=\tfrac{1}{2\eps}\int_{\Omega'}\abs{u - d}^2\de\rho+\TV_\eps(u;{\Omega'})$ for $u\in L^2({\Omega'})$.
\cref{lem:restricted_minimizer} implies that $w$ is a minimizer of $E$ with fixed data $w$ on $\Omega'\setminus \Omega'_\eps$. 
By the assumption that $v\geq w$ on $\Omega'\setminus \Omega'_\eps$, we get that $v\wedge w = w$ on $\Omega'\setminus \Omega'_\eps$ and hence $v\wedge w$ is a feasible competitor for $w$ on $\Omega'$.
In the case $\Omega'=\Omega$ it is trivial that $v\wedge w$ is a competitor for $w$ on $\Omega$.

Assuming that $v\wedge w\neq w$ on a set of positive measure in $\Omega'$ and using the strict convexity of $E$, we get
\begin{align*}
	E(w) < E(v\wedge w).
\end{align*}
Furthermore, by assumption we get that
\begin{align*}
	E(v) \leq E(v\vee w).
\end{align*}
Summing these two inequalities and using \cref{lem:submodularity} to cancel the total variations we get
\begin{align*}
	\int_{\Omega'} \abs{v-d}^2+\abs{w-d}^2\de\rho 
	&<
	\int_{\Omega'} 
	\abs{v\vee w-d}^2+\abs{v\wedge w-d}^2\de\rho
	\\
	&=
	\int_{\Omega'} \abs{v-d}^2+\abs{w-d}^2\de\rho
\end{align*}
which is a contradiction. 
Hence, we have $v\wedge w=w$ almost everywhere on ${\Omega'}$, proving the claim.
\end{proof}

In the next lemma we derive a subsolution of the optimization problem in \labelcref{eq:selection_operator} where the data is a cone which corresponds to controlling the action of the scheme \labelcref{eq:selection} on a ball. 
Note that, as opposed to the case of constant densities and a local total variation, we cannot compute the explicit solution. 
However, a subsolution suffices for our purposes. For consistency with the language in \cref{def:superflow}, a subsolution is minimal with respect to competitors that are greater than it.

\begin{lemma}[Subsolution for cone data]\label{lem:cone}
    Let $\Omega\subset \R^N$ a bounded and convex domain, $x_0 \in \Omega$, and define $d(x) := \abs{x-x_0}$ for $x\in\R^N$. 
    Let furthermore 
    \begin{align*}
        w := \argmin_{u \in L^2(\Omega)}\frac{1}{2\eps}\int_{\Omega}\abs{u-d}^2\de\rho + \TV_\eps(u).
    \end{align*}
    There exist constants $C_1,C_2\geq 1$ with $2C_2\geq C_1> C_2$ and depending only on  $\Lip(\rho_0)$, $\Lip(\rho_1)$, $c_\rho$, $\operatorname{diam}(\Omega)$, and the dimension $N$, such that for $\eps>0$ sufficiently small it holds for almost every $x\in\Omega$ that
    \begin{align*}
        w(x) \leq 
        \overline w(x) :=
        \begin{dcases}
            C_1\sqrt{\eps}\qquad&\text{if } \abs{x-x_0}\leq C_2\sqrt{\eps},\\
            \abs{x-x_0} + \frac{C_2(C_1-C_2)\eps}{\abs{x-x_0}}\qquad&\text{else}.
        \end{dcases}
    \end{align*}
\end{lemma}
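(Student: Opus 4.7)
The strategy is to invoke the global comparison principle \cref{prop:comparison_two} with $\Omega'=\Omega$ and $v=\overline w$. The conclusion $w\leq \overline w$ a.e.\ follows once we verify the variational supersolution condition
\[
E(\overline w) \leq E(\overline w \vee z)\quad\text{for every }z\in L^\infty(\Omega), \qquad\text{where}\quad E(u):=\tfrac{1}{2\eps}\int_\Omega |u-d|^2\de\rho + \TV_\eps(u).
\]
Setting $\varphi:=(z-\overline w)_+\geq 0$, so that $\overline w\vee z=\overline w+\varphi$, and discarding the non-negative $\tfrac{1}{2\eps}\int\varphi^2\de\rho$, this reduces to the first-order variational inequality
\[
\frac{1}{\eps}\int_\Omega (\overline w - d)\varphi\,\de\rho + \TV_\eps(\overline w + \varphi) - \TV_\eps(\overline w) \geq 0 \quad\text{for every}\ \varphi\in L^2(\Omega),\ \varphi\geq 0.
\]

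To bound the TV difference from below, I would follow the blueprint of \cref{prop:density_of_pushforward} and construct a one-sided subgradient from above. The condition $C_1\leq 2C_2$ guarantees that $\overline w$ is radially non-decreasing on $\R^N$, so that $\Gamma_\eps(x):=x+\eps\,\tfrac{x-x_0}{|x-x_0|}$ realizes the essential supremum of $\overline w$ on $\overline{B(x,\eps)\cap\Omega}$; one then picks a measurable selection $\gamma_\eps(x)\in\overline{B(x,\eps)\cap\Omega}$ attaining the essential infimum. For $\varphi\geq 0$, the pointwise bounds
\[
\esssup_{B(x,\eps)\cap\Omega}(\overline w+\varphi)\geq\overline w(\Gamma_\eps(x))+\varphi(\Gamma_\eps(x)),\quad
\essinf_{B(x,\eps)\cap\Omega}(\overline w+\varphi)\leq\overline w(\gamma_\eps(x))+\varphi(\gamma_\eps(x)),
\]
combined with the change-of-variables formula yield $\TV_\eps(\overline w+\varphi)-\TV_\eps(\overline w)\geq \int_\Omega p\varphi\,\de x$ with $p:=\eps^{-1}\tfrac{\de}{\de\L^N}\bigl[(\Gamma_\eps)_\sharp\rho_0-\rho_0+\rho_1-(\gamma_\eps)_\sharp\rho_1\bigr]$. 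It then suffices to verify the pointwise supersolution inequality $p+\tfrac{\overline w-d}{\eps}\rho\geq 0$ a.e.\ in $\Omega$.

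The verification splits into two regions. In the smooth region $|y-x_0|\geq C_2\sqrt\eps+\eps$, a Taylor expansion of the pushforward Jacobians---verbatim as in the proof of \cref{prop:consistency}---gives $p(y)=-\div\!\bigl(\rho(y)\tfrac{y-x_0}{|y-x_0|}\bigr)+O(\eps)$, while the explicit formula gives $(\overline w-d)/\eps = C_2(C_1-C_2)/|y-x_0|$. Substituting, the sum is bounded below by
\[
\frac{\rho(y)\,[C_2(C_1-C_2)-(N-1)]}{|y-x_0|}-|\nabla\rho(y)|+O(\eps),
\]
which is non-negative once $C_2(C_1-C_2)$ is chosen larger than $(N-1)+\operatorname{diam}(\Omega)\,\Lip(\rho)/c_\rho$. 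In the complementary cap/transition region $|y-x_0|<C_2\sqrt\eps+\eps$, the separation $\overline w-d\geq (C_1-C_2)\sqrt\eps-2\eps$ makes the fidelity contribution $(\overline w-d)\rho/\eps\gtrsim c_\rho(C_1-C_2)/\sqrt\eps$ diverge as $\eps\to 0$, absorbing any uniformly bounded contribution to $p$.

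The main technical obstacle is ensuring $p$ remains controlled (and not severely negative) near $x_0$: the naive radial choice $\gamma_\eps(x)=x-\eps(x-x_0)/|x-x_0|$ fails to be injective on $B(x_0,\eps)$ and makes the pushforward $(\gamma_\eps)_\sharp\rho_1$ develop a density blowing up like $|y-x_0|^{-(N-1)}$, which would defeat the pointwise bound inside the cap. I would circumvent this by exploiting the freedom granted by $\overline w$ being constant on the cap: one may select $\gamma_\eps$ to be an injective extension with $L^\infty$-bounded Jacobian, e.g.\ matching smoothly with the radial choice outside an $\eps$-neighborhood of $x_0$. Convexity of $\Omega$ enters only by guaranteeing that $\Gamma_\eps(x),\gamma_\eps(x)\in\closure\Omega$ for every $x\in\Omega$, so the change of variables is valid globally. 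Finally, $C_1,C_2$ are chosen so that $C_2<C_1\leq 2C_2$ and $C_2(C_1-C_2)$ exceeds the threshold identified in the smooth-region estimate, e.g.\ $C_2=\sqrt{M+1}$, $C_1=C_2+M/C_2$ with $M:=(N-1)+\operatorname{diam}(\Omega)\,\Lip(\rho)/c_\rho+1$.
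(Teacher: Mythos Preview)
Your overall strategy matches the paper's---construct a one-sided subgradient $p$ via argmax/argmin maps and verify the pointwise inequality $(\overline w-d)\rho+\eps p\geq 0$---but three concrete points fail as stated.

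First, your fixed choice $\Gamma_\eps(x)=x+\eps\tfrac{x-x_0}{|x-x_0|}$ is itself problematic near $x_0$: its image misses the ball $B(x_0,\eps)$, so the pushforward density of $(\Gamma_\eps)_\sharp\rho_0$ vanishes there and $p_0=-\rho_0/\eps$ on $B(x_0,\eps)$. This $O(1/\eps)$ negative contribution cannot be absorbed by the fidelity term $(\overline w-d)\rho/\eps\leq C_1\rho/\sqrt\eps$. You only propose modifying $\gamma_\eps$, but $\Gamma_\eps$ needs the same treatment; the freedom to do so comes from $\overline w$ being constant on the cap, so any point of $\overline{B(x,\eps)}$ inside the cap is also an argmax.

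Second, no injective selection with merely ``$L^\infty$-bounded Jacobian'' yields a \emph{uniformly bounded} $p$ on the cap. The map must transition from the identity to the $\eps$-shift, and if the transition width is $\delta$ the Jacobian of the inverse differs from $1$ by $O(\eps/\delta)$, giving $p=O(1/\delta)$. Your suggested $\eps$-neighborhood transition produces $p=O(1/\eps)$, again dominating the fidelity. The paper instead transitions over the \emph{entire} cap of radius $C_2\sqrt\eps$ via the explicit piecewise-linear radial map $\gamma_\eps(x)=(1-\sqrt\eps/C_2)(x-x_0)+x_0$ with $\Gamma_\eps:=\gamma_\eps^{-1}$, obtaining $\eps p\geq -C_\rho\sqrt\eps$ on the cap and $\eps p\geq -C_\rho\eps/|x-x_0|$ outside. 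Both bounds are the \emph{same order} as the corresponding fidelity terms, so the pointwise inequality holds only after choosing $C_1-C_2$ and $C_2(C_1-C_2)$ to exceed the explicit constant $C_\rho$; the balancing is not automatic.

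Third, convexity of $\Omega$ does \emph{not} force $\Gamma_\eps(x)\in\overline\Omega$---the map pushes \emph{away} from $x_0$ and generically exits $\Omega$ near $\partial\Omega$. Only $\gamma_\eps(\Omega)\subset\Omega$ follows from convexity. The paper handles this asymmetry by inserting the cutoff $\one_{\gamma_\eps(\Omega)}$ into the definition of $p_1$ and then using $\varphi\geq 0$ to discard the resulting boundary terms; this is precisely where convexity and the sign restriction on test functions enter.
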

\begin{remark}\label{rem:supersolution_cone}
    For a negative cone, i.e., $d(x)=-\abs{x-x_0}$, it is not immediately obvious that $-\overline w$ will be a supersolution, in the sense that $w\geq-\overline w$. 
    The reason for this is that $\TV_\eps(-u)\neq \TV_\eps(u)$.
    However, since all constants in the definition of $\overline w$ only depend on the Lipschitz constants of $\rho_0$ and $\rho_1$, the density lower bound $c_\rho$, and the dimension $N$, one can just exchange the roles of the densities in the definition of $\TV_\eps$ and reduce to the subsolution case of \cref{lem:cone}. 
\end{remark}
\begin{proof}
    For a lighter notation we assume without loss of generality that $x_0=0 \in \Omega$. Throughout the proof, we let $C_1\geq 1$ and $C_2\geq 1$ be the constants from the lemma statement, deferring their specific choice to the last step of the proof.
    The strategy is to construct $p\in L^2(\Omega)$ that satisfies
    \begin{align}
        \label{eq:positive_subgrad}
        (\overline w - d)\rho + \eps p &\geq 0 && \text{in } \Omega,
        \\
        \label{eq:cone_subdifferential}
        \int_\Omega p \varphi \de x &\leq \TV_\eps(\overline w+\varphi) - \TV_\eps(\overline w) && \text{for all }\varphi\in L^2(\Omega),\,\varphi\geq 0.
    \end{align}
    These two properties immediately imply that for the energy $E(u):=\tfrac{1}{2\eps}\int_{\Omega}\abs{u - d}^2\de\rho+\TV_\eps(u)$ for $u\in L^2({\Omega})$ it holds $E(\overline w) \leq E(\varphi + \overline w)$ for all non-negative test functions $\varphi\geq 0$.
    Then \cref{prop:comparison_two} with the choice $\varphi := w\vee\overline w - \overline w\geq 0$ implies the claim that $w\leq\overline w$.
    The required function $p$ is reminiscent of a subgradient of $\TV_\eps$ at $\overline w$ with the difference being that it only satisfies the subdifferential inequality \labelcref{eq:cone_subdifferential} for non-negative test functions. 

    \textit{Step 1 (Construction of a ``subgradient'').}
    Since $\overline w$ is a radial function, constructing $p$ is not difficult. In the spirit of \cref{prop:density_of_pushforward}, we will define argmin and argmax operators corresponding to the capped cone $\overline w$ on $\R^N$.
    We first note that $\overline w$ is radially non-decreasing. 
    Indeed, the derivative of the function $f(r):=r+\frac{C_2(C_1-C_2)\eps}{r}$ satisfies for $r\geq C_2\sqrt{\eps}$:    
    \begin{align*}
        f'(r) = 1 - \frac{C_2(C_1-C_2)\eps}{r^2} \geq 1 - \frac{C_2(C_1-C_2)}{C_2^2}
        =
        1-\frac{C_1-C_2}{C_2} = \frac{2C_2-C_1}{C_2}\geq 0.
    \end{align*}
    To construct $p$ we begin by defining the argmax map
    \begin{align}
        \Gamma_\eps(x) := 
            \sigma_\eps(\abs{x})\frac{x}{\abs{x}}
    \end{align}
    where the piecewise linear and increasing function $\sigma_\eps$ is defined as
    \begin{align*}
        \sigma_\eps(t) : = t+ \eps
        \min\left\lbrace
        \frac{t}{C_2\sqrt{\eps}-\eps},1
        \right\rbrace
        = 
        \left\{\begin{aligned}
            &\frac{t}{1- \sqrt{\eps}/C_2}
            \quad && \text{ if }0\leq t < C_2\sqrt{\eps}-\eps,\\
            &t+\eps\quad && \text{ if }t\geq C_2\sqrt{\eps}-\eps,
        \end{aligned}\right.
    \end{align*}
    which in particular satisfies $\sigma_\eps(0) = 0$ so that $\Gamma_\eps$ is well defined at $x=0$.
    Note that we can assume $\eps< 1$ so that $C_2\sqrt{\eps}-\eps>0$.
    It is important to note that here $\Gamma_\eps$ is a ``global'' argmax map of $\overline w$ that does not see the geometry of the domain $\Omega$, i.e., $\Gamma_\eps(x)\in\argmax_{B(x,\eps)}\overline w$.
    Note that $\Gamma_\eps$ is invertible and by construction an $\eps$-perturbation of the identity. 
    More precisely, the function $\sigma_\eps$ is invertible on $[0,\infty)\to[0,\infty)$ with inverse $\tau_\eps := \sigma_\eps^{-1}$.
    Hence, the inverse of $\Gamma_\eps$ is given by
    \begin{align*}
        \gamma_\eps (x):=\Gamma_\eps^{-1}(x) = \tau_\eps(\abs{x})\frac{x}{\abs{x}}
    \end{align*}
    and thanks to the convexity of $\Omega$ it holds that $\gamma_\eps(\Omega)\subset\Omega$.
    It is immediate from the piecewise definition of $\sigma_\eps$ that
    \begin{align}\label{eq:tau_explicit}
        \tau_\eps(s)
        :=
        s - 
        \eps 
        \min\left\lbrace
        \frac{s}{C_2\sqrt{\eps}},1
        \right\rbrace
        =
        \left\{\begin{aligned}
            &(1-\sqrt{\eps}/C_2) s
            \quad && \text{ if }0\leq s < C_2\sqrt{\eps},\\
            &s-\eps\quad && \text{ if }s\geq C_2\sqrt{\eps}.
        \end{aligned}\right.
    \end{align}
    Using this it is straightforward to see that $\gamma_\eps $ is an argmin map for $\overline w$ on $\R^N$.
    
    We can now define the functions
    \begin{alignat*}{2}
        p_0(x) &:= \frac{1}{\eps}\bigg(\rho_0(\Gamma_\eps^{-1}(x))\abs{\det \nabla\Gamma_\eps^{-1}(x)} - \rho_0(x)\bigg),
        \\
        p_1(x) &:= \frac{1}{\eps}\bigg(\rho_1(x) - \rho_1(\gamma_\eps^{-1}(x))\abs{\det \nabla\gamma_\eps^{-1}(x)}\bigg)\one_{\gamma_\eps(\Omega)},
        \\
        p(x) &:= p_0(x) + p_1(x).
    \end{alignat*}
    Note that the reason why we have to introduce the characteristic function $\one_{\gamma_\eps(\Omega)}$ in the definition of $p_1$ is that the argmax map $\Gamma_\eps=\gamma_\eps^{-1}$ exits $\Omega$, i.e., $\gamma_\eps^{-1}(\Omega)\not\subset\Omega$.
    
    \textit{Step 2 (Validity of the ``subdifferential'' inequality).}
    Next we prove \labelcref{eq:cone_subdifferential} by estimating the $L^2$-inner product of $p_i$ and $\varphi$ for $i=0,1$, where slightly different arguments are required.
    We start with $i=0$.
    Using a change of variables and $\Gamma_\eps(x)\in\argmax_{B(x,\eps)\cap\Omega}\overline w$ for $x\in\Gamma_\eps^{-1}(\Omega)$ we find
    \begin{align*}
        \eps \int_\Omega p_0\varphi\de x
        &=
        \int_\Omega
        \left(\rho_0(\Gamma_\eps^{-1})\abs{\det \nabla\Gamma_\eps^{-1}} - \rho_0\right)\varphi\de x
        \\
        &=
        \int_{\Gamma_\eps^{-1}(\Omega)}
        \varphi \circ \Gamma_\eps \de\rho_0 - \int_\Omega\varphi\de\rho_0
        \\
        &=
        \int_{\Gamma_\eps^{-1}(\Omega)}
        (\overline w +\varphi)\circ \Gamma_\eps \de\rho_0 - \int_\Omega \overline w +\varphi \de\rho_0
        -
        \left[
        \int_{\Gamma_\eps^{-1}(\Omega)}
        \overline w\circ \Gamma_\eps \de\rho_0
        -
        \int_\Omega\overline w\de\rho_0
        \right]
        \\
        &\leq 
        \int_\Omega 
        \left(\esssup_{B(\cdot,\eps)\cap\Omega}(\overline w +\varphi)
        -
        (\overline w +\varphi)\right)
        \de\rho_0
        -
        \int_\Omega 
        \left(\esssup_{B(\cdot,\eps)\cap\Omega}\overline w
        -
        \overline w\right)
        \de\rho_0
        \\
        &\qquad
        +\int_{\Omega\setminus\Gamma_\eps^{-1}(\Omega)} 
        -\esssup_{B(\cdot,\eps)\cap\Omega}(\overline w +\varphi)+
        \esssup_{B(\cdot,\eps)\cap\Omega}\overline w
        \de\rho_0.
    \end{align*}
    The last integral on the right-hand side is non-positive because $\varphi \geq0$. In the last inequality above we used $\overline w\circ\Gamma_\eps(x) = \esssup_{B(x,\eps)\cap \Omega}\overline w$ for $x\in\Gamma_\eps^{-1}(\Omega)$, and that $(\varphi+\overline w)\circ\Gamma_\eps \leq \esssup_{B(\cdot,\eps)\cap \Omega}(\varphi+\overline w)$ almost everywhere in $\Gamma_\eps^{-1}(\Omega)$.
    The reasoning for the latter inequality to hold is analogous to the one in the proof of \cref{prop:density_of_pushforward}, using that $\Gamma_\eps$ is a Lipschitz isomorphism and hence preserves Lebesgue null-sets.
    
    The case $i=1$ is treated similarly, although not entirely symmetrically.
    Using the fact that $\varphi \geq0$ and that by convexity $\gamma_\eps(\Omega)\subset\Omega$ we get 
    \begin{align*}
        \eps \int_\Omega p_1\varphi\de x
        &=
        \int_{\gamma_\eps(\Omega)}
        \left(\rho_1-\rho_1(\gamma_\eps^{-1})\abs{\det \nabla\gamma_\eps^{-1}}\right)\varphi\de x
        \\
        &=
        \int_{\gamma_\eps(\Omega)}
        \varphi
        \de \rho_1
        -
        \int_{\gamma_\eps^{-1}(\gamma_\eps(\Omega))}
        \varphi\circ \gamma_\eps
        \de\rho_1
        \\
        &\leq
        \int_{\Omega}
        \varphi
        \de \rho_1
        -
        \int_{\Omega}
        \varphi\circ \gamma_\eps
        \de\rho_1
        \\
        &=
        \int_{\Omega}
        (\overline w +\varphi)
        \de \rho_1
        -
        \int_{\Omega}
        (\overline w +\varphi)\circ \gamma_\eps
        \de\rho_1
        -\left[
        \int_{\Omega}
        \overline w \de \rho_1
        -
        \int_\Omega 
        \overline w \circ \gamma_\eps \de\rho_1
        \right]
        \\
        &\leq 
        \int_\Omega
        \left((\overline w +\varphi)
        -
        \essinf_{B(\cdot,\eps)\cap\Omega}
        (\overline w +\varphi)\right)
        \de\rho_1
        -
        \int_\Omega
        \left(\overline w
        -
        \essinf_{B(\cdot,\eps)\cap\Omega}
        \overline w\right)
        \de\rho_1.
    \end{align*} 
    Here again, we need to argue as before that $\gamma_\eps$ preserves Lebesgue null-sets (as a diffeomorphism) for the validity of the last inequality.
    Adding the two inequalities we have just established and dividing by $\eps>0$ proves \labelcref{eq:cone_subdifferential}.

 \textit{Step 3 (Optimality conditions for supersolution).}
    It remains to prove \labelcref{eq:positive_subgrad}, i.e., the inequality $(\overline w-d)\rho + \eps p \geq 0$. The basic idea is that $\overline w-d \approx \sqrt{\eps}$ by the choice of $\overline w$, so that the inequality will follow if we can show $\eps p\geq-\sqrt{\eps}$ (up to a constant multiple). Importantly, within this step, we let $\crho >0$ be a constant (possibly changing from line to line) depending only on ${\rm Lip}(\rho_i)$, $c_\rho$, ${\rm diam}(\Omega)$, and the dimension $N.$
    
    For this we first compute $\det \nabla\Gamma_\eps^{-1}(x)$ which appears in the definition of $p_0$. 
    The Jacobian of $\Gamma_\eps^{-1}$ is given by
    \begin{align*}
        \nabla\Gamma_\eps^{-1}(x)
        &=
        \tau_\eps'(\abs{x})\frac{x}{\abs{x}}\otimes\frac{x}{\abs{x}}
        +
        \frac{\tau_\eps(\abs{x})}{\abs{x}}
        \left(
        \mathbb 1
        -
        \frac{x}{\abs{x}}
        \otimes
        \frac{x}{\abs{x}}
        \right)
        \\
        &=
        \frac{\tau_\eps(\abs{x})}{\abs{x}}
        \left[
        \mathbb 1
        +
        \left(
        \tau_\eps'(\abs{x})
        \frac{\abs{x}}{\tau_\eps(\abs{x})}
        -
        1
        \right)
        \frac{x}{\abs{x}}
        \otimes 
        \frac{x}{\abs{x}}       
        \right].
    \end{align*}
    The derivative of $\tau_\eps$ defined in \labelcref{eq:tau_explicit} is given by
    $
        \tau_\eps'(t) 
        =
        1-\frac{\sqrt{\eps}}{C_2}\one_{\{t\leq C_2\sqrt{\eps}\}},
    $
    so that 
     \begin{align*}
        \left(
        \tau_\eps'(\abs{x})
        \frac{\abs{x}}{\tau_\eps(\abs{x})}
        -
        1
        \right) 
        =\begin{cases}
        0 & \text{ if }|x|\leq C_2 \sqrt{\eps}, \\
        \frac{\eps}{|x|-\eps} & \text{ if }|x|\geq C_2\sqrt{\eps}   .   
        \end{cases}
    \end{align*}
    
    Using this, we make a case distinction based on $\abs{x}$:
    For $\abs{x}< C_2\sqrt{\eps}$, using also the elementary inequality $(1+x)^N\geq 1+N x$ for $x\geq -1$, we get
    \begin{align}\label{eq:bound_determinant_inner}
        \det(\nabla\Gamma_\eps^{-1}(x))
        =
        \left(\frac{\tau_\eps(\abs{x})}{\abs{x}}\right)^N
        \det(\mathbb 1)
        =
        \left(1-\frac{\sqrt{\eps}}{C_2}\right)^N
        \geq 
        1-\frac{\sqrt{\eps}N}{C_2}
    \end{align}
    since $\eps\leq 1\leq  C_2^2$.
    For $\abs{x}\geq C_2\sqrt{\eps}$ we can use a Taylor expansion of the determinant to get
    \begin{align}
        \nonumber
        \det(\nabla\Gamma_\eps^{-1}(x))
        &=
        \left(
        1-\frac{\eps}{\abs{x}}
        \right)^N
        \det\left(
        \mathbb 1
        +
        \frac{\eps}{\abs{x}-\eps}
        \frac{x}{\abs{x}}
        \otimes
        \frac{x}{\abs{x}}
        \right)
        \\
        &\geq
        \label{eq:bound_determinant_outer}
        \left(
        1-\frac{\eps N}{\abs{x}}
        \right)
        \left(
        1+
        \frac{\eps}{\abs{x}-\eps}
        +
        O\left(\left(\frac{\eps}{\abs{x}-\eps}\right)^2\right)\right)
        \geq
        \left(
        1-\frac{\eps N}{\abs{x}}
        \right),
    \end{align}        
    where we note that for $\eps>0$ sufficiently small the term $\frac{\eps}{\abs{x}-\eps}$ dominates the quadratic one and, consequently, both can be dropped.

    Using Lipschitz continuity of $\rho_0$ and the explicit formula for $\tau_\eps$ in \labelcref{eq:tau_explicit} we also have
\begin{align}
        \nonumber
        \rho_0(\Gamma_\eps^{-1}(x)) = 
        \rho_0\left(\tau_\eps(\abs{x})\frac{x}{\abs{x}}\right)
        &\geq 
        \rho_0(x) - \Lip(\rho_0)\abs{x-\left(\abs{x}-\eps\min\left\lbrace\frac{\abs{x}}{C_2\sqrt{\eps}},1\right\rbrace\right)\frac{x}{\abs{x}}}
        \\
        &=
        \rho_0(x)-\eps \crho
        \min\left\lbrace\frac{\abs{x}}{C_2\sqrt{\eps}},1 \right\rbrace.   \label{eq:rho_tau}
    \end{align}
    Combining \labelcref{eq:bound_determinant_inner}, \labelcref{eq:bound_determinant_outer,eq:rho_tau} we obtain the following lower bound for $p_0$:
\begin{align}
        \nonumber
        \eps p_0(x) 
        &\geq  
        \left(\rho_0(x)-
        \eps\crho
        \min\left\lbrace\frac{\abs{x}}{C_2\sqrt{\eps}},1 \right\rbrace\right)
        \left(
        1 - \frac{\eps N}{\max\{\abs{x},C_2\sqrt{\eps}\}}
        \right)
        -\rho_0(x)
        \\\nonumber 
        &\geq
        -\sqrt{\eps}\crho\left(\min\left\lbrace\frac{\abs{x}}{C_2},\sqrt{\eps} \right\rbrace
        +\frac{\sqrt{\eps}}{\max\{\abs{x},C_2\sqrt{\eps}\}}\right). 
    \end{align} 
    From this we obtain two lower bounds---a generic one and an improved estimate away from the cone tip: 
    \begin{align}
        \label{eq:LB_p0_generic}
        \eps p_0(x) 
        &\geq -\sqrt{\eps}\crho\left(\sqrt{\eps} + \frac{1}{C_2}\right) \geq -\sqrt{\eps}\crho
         &&\text{for all } x\in\Omega,
        \\
        \label{eq:LB_p0}
        \eps p_0(x) 
        &\geq 
        -\sqrt{\eps} \crho \left(\sqrt{\eps} +\frac{\sqrt{\eps}}{|x|}\right) \geq - \frac{\eps}{|x|}\crho && \text{if }\abs{x}\geq C_2\sqrt{\eps},
    \end{align}
    where in the last inequality we have absorbed $\operatorname{diam}(\Omega)$ into $\crho$.
    
    We continue with proving a similar bound for $p_1$.
    For this we remember that $\gamma_\eps^{-1}(x)=\Gamma_\eps(x) = \sigma_\eps(\abs{x}) \frac{x}{\abs{x}}$.
    Analogous to before, we get that the Jacobian is
    \begin{align*}
        \nabla\gamma_\eps^{-1}(x)
        =
        \frac{\sigma_\eps(\abs{x})}{\abs{x}}
        \left[
        \mathbb 1
        +
        \left(
        \sigma_\eps'(\abs{x})
        \frac{\abs{x}}{\sigma_\eps(\abs{x})}
        -
        1
        \right)
        \frac{x}{\abs{x}}
        \otimes 
        \frac{x}{\abs{x}}       
        \right],
    \end{align*}
     and we find
    $
        \sigma_\eps'(t)
        =
        1+\frac{\sqrt{\eps}}{C_2-\sqrt{\eps}}\one_{\{t\leq C_2\sqrt{\eps}-\eps\}}
    $
    and compute 
    \begin{align*}
        \left(
        \sigma_\eps'(\abs{x})
        \frac{\abs{x}}{\sigma_\eps(\abs{x})}
        -
        1
        \right) 
        =\begin{dcases}
        0 & \text{ if }|x|\leq C_2 \sqrt{\eps}-\eps, \\
        -\frac{\eps}{|x|+\eps} & \text{ if }|x|\geq C_2\sqrt{\eps}-\eps   .   
        \end{dcases}
    \end{align*}  
    Making case distinctions, as before, and also using that $(1+x)^N\leq 1+2Nx$ for sufficiently small $x,$ it holds for $ \abs{x}\leq C_2\sqrt{\eps}-\eps$ that
    \begin{align}\label{eq:bound_determinant_inner_1}
        \det(\nabla\gamma_\eps^{-1}(x))
        =
        \left(
        1 + \frac{\eps}{C_2\sqrt{\eps}-\eps}
        \right)^N
        \leq 
        1 + \frac{2\sqrt{\eps}N}{C_2-\sqrt{\eps}}
    \end{align}
    whenever $\eps>0$ is sufficiently small (depending on $C_2$).
    Similarly, for $\abs{x}\geq C_2\sqrt{\eps}-\eps$, we have
    \begin{align}
        \nonumber
        \det(\nabla\gamma_\eps^{-1}(x))
        &\leq 
        \left(1 + \frac{2\eps N}{\abs{x}}\right)
        \left(
        1
        -
        \frac{\eps}{\abs{x}+\eps}
        +
        O\left(
        \left(
        \frac{\eps}{\abs{x}+\eps}
        \right)^2
        \right)
        \right)
        \\        \label{eq:bound_determinant_outer_1}
        &\leq 
        \left(1 + \frac{2\eps N}{\abs{x}}\right)
    \end{align}
    for $\eps>0$ sufficiently small.
    Using Lipschitz continuity of $\rho_1$ we have
    \begin{align}
        \nonumber
        \rho_1(\gamma_\eps^{-1}(x)) 
        = 
        \rho_1\left(\sigma_\eps(\abs{x})\frac{x}{\abs{x}}\right)
        &\leq 
        \rho_1(x) +
        \Lip(\rho_1)
        \abs{x-\left(\abs{x} + \eps\min\left\lbrace\frac{\abs{x}}{C_2\sqrt{\eps}-\eps},1\right\rbrace\right)\frac{x}{\abs{x}}}
        \\
        \label{eq:rho_tau-1}
        &=
        \rho_1(x)+\eps\crho
        \min\left\lbrace\frac{\abs{x}}{C_2\sqrt{\eps}-\eps},1\right\rbrace.
    \end{align}
    Combining \labelcref{eq:bound_determinant_inner_1}, \labelcref{eq:bound_determinant_outer_1,eq:rho_tau-1} we obtain the following lower bound on $p_1(x)$ for $x\in\gamma_\eps(\Omega)$:
    \begin{align}\nonumber
        \eps p_1(x) 
        &\geq  
        \rho_1(x)
        -
        \left(
        \rho_1(x)+\eps\crho
        \min\left\lbrace\frac{\abs{x}}{C_2\sqrt{\eps}-\eps},1\right\rbrace\right)
        \left(
        1 + \frac{2\eps N}{\max\{\abs{x},C_2\sqrt{\eps}-\eps\}}
        \right)
        \\\nonumber
        &= 
        -\sqrt{\eps}\crho
        \left(\min\left\lbrace \frac{\abs{x}}{C_2-\sqrt{\eps}},\sqrt{\eps}\right\rbrace
        +
        \frac{\sqrt{\eps}}{\max\{\abs{x},C_2\sqrt{\eps}-\eps\}}\right)
        \\\nonumber 
        &\qquad-
        \sqrt{\eps} \crho
        \min\left\lbrace\frac{\abs{x}}{C_2-\sqrt{\eps}},\sqrt{\eps}\right\rbrace
        \frac{\eps}{\max\{\abs{x},C_2\sqrt{\eps}-\eps\}}
        \\\nonumber 
        &\geq 
         -\sqrt{\eps}\crho
        \left(\min\left\lbrace \frac{\abs{x}}{C_2-\sqrt{\eps}},\sqrt{\eps}\right\rbrace
        +
        \frac{\sqrt{\eps}}{\max\{\abs{x},C_2\sqrt{\eps}-\eps\}}\right)
    \end{align}
    if we restrict $\sqrt\eps\leq\frac12$ which, in particular, means $\eps\leq C_2\sqrt{\eps}-\eps$.
    Again we deduce two lower bounds, using also that for $x \in \Omega\setminus\gamma_\eps(\Omega)$ we even have $p_1(x)=0$ by definition of $p_1$,
    \begin{align}\label{eq:LB_p1_generic}
        \eps p_1(x) 
        &\geq 
        -\sqrt{\eps}\crho
        &&
        x\in\Omega,
        \\\label{eq:LB_p1}
        \eps p_1(x) 
        &\geq 
        - \frac{\eps}{|x|}\crho && \abs{x}\geq C_2\sqrt{\eps}-\eps.
    \end{align}
   Adding the bounds \labelcref{eq:LB_p0_generic,eq:LB_p1_generic,eq:LB_p0,eq:LB_p1} we obtain the cumulative lower bound
   \begin{align}\label{eq:LB_p_generic}
        \eps p(x) 
        &\geq 
        -\sqrt{\eps}\crho
        &&
        \text{for all }x\in\Omega,
        \\\label{eq:LB_p_sharper}
        \eps p(x) 
        &\geq 
        - \frac{\eps}{|x|}\crho && \text{if }\abs{x}\geq C_2\sqrt{\eps}.
    \end{align}

    Finally, we can now turn to proving \labelcref{eq:positive_subgrad}, making a case distinction based on $\abs{x}$.
    If $0\leq\abs{x}\leq C_2\sqrt{\eps}$ we can use the definition of $\overline w$ and the lower bound \labelcref{eq:LB_p_generic} to get
    \begin{align*}
        (\overline w(x) - d(x))\rho(x) + \eps p(x)
        &\geq 
        (C_1\sqrt{\eps} - \abs{x})\rho(x)
        +\eps p(x)
        \\
        &\geq 
        \sqrt{\eps}
        \left(C_1-C_2
        -\crho\right)\rho(x)
        \geq 0
    \end{align*}
    if we choose the gap between $C_1$ and $C_2$ sufficiently large (depending only on $\crho$).
    In the case $\abs{x}\geq C_2\sqrt{\eps}$ we can use the sharper lower bound \labelcref{eq:LB_p_sharper} to obtain
    \begin{align*}
         (\overline w(x)  - d(x))  \rho(x) + \eps p(x)
        &\geq 
        \frac{C_2(C_1-C_2)\eps}{\abs{x}}\rho(x)
        -\frac{\eps}{\abs{x}}\crho \rho(x)
        \\
        &=        
        \frac{\eps\rho(x)}{\abs{x}}
        \left(C_2(C_1-C_2)
        -
        \crho
        \right)
        \geq 0
    \end{align*}
    if we choose the gap between $C_1$ and $C_2$ sufficiently large (again depending only on $\crho$).
    Hence, we have proved \labelcref{eq:positive_subgrad} which concludes the proof.
\end{proof}
   
The first corollary of \cref{lem:cone} (in fact of \cref{rem:supersolution_cone}) is that it allows us to control the evolution of a ball under the scheme \labelcref{eq:selection}.
\begin{corollary}[Supersolution for balls]
    Under the conditions of \cref{lem:cone} there exists a constant $C>0$, depending only on  $\Lip(\rho_0)$, $\Lip(\rho_1)$, $c_\rho$, $\operatorname{diam}(\Omega)$, and the dimension $N$, such that for any $x_0\in\Omega$, $0<R<\dist(x_0,\partial\Omega)$, and $\eps>0$ sufficiently small it holds that
    \begin{align*}
        S_\eps(B(x_0,R))\supset B(x_0,R-C\sqrt{\eps}).
    \end{align*}     
\end{corollary}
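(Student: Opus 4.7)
The plan is to apply \cref{lem:cone} (or rather \cref{rem:supersolution_cone}) after a translation, then carefully compare with the upper bound for the capped conical supersolution.

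Without loss of generality assume $x_0 = 0$. A direct computation with \labelcref{eqn:sdist} shows that $\sdist(x,B(0,R)^c) = R - \abs{x}$ throughout $\Omega$. Because $\TV_\eps$ is invariant under addition of constants and the $L^2$-fidelity shifts accordingly, substituting $u = R + v$ in the minimization problem defining $w^*_\eps$ in \labelcref{eq:selection_operator} yields $w^*_\eps = R + v^*$ where
\begin{align*}
	v^* = \argmin_{v\in L^2(\Omega)} \frac{1}{2\eps}\int_\Omega \abs{v - (-\abs{x})}^2 \de\rho + \TV_\eps(v).
\end{align*}
The data $-\abs{x}$ is a negative cone, so by \cref{rem:supersolution_cone} we have $v^* \geq -\overline w$ almost everywhere in $\Omega$, where $\overline w$ is the function from \cref{lem:cone} applied with the roles of $\rho_0$ and $\rho_1$ interchanged. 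Crucially, the constants $C_1, C_2$ produced by \cref{lem:cone} depend only on $\Lip(\rho_0), \Lip(\rho_1), c_\rho, \operatorname{diam}(\Omega)$ and $N$; these quantities are symmetric in $\rho_0 \leftrightarrow \rho_1$, so the same $C_1, C_2$ work. Consequently $w^*_\eps \geq R - \overline w$ a.e.

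It then remains to set $C := C_1$ and verify that $\overline w < R$ throughout $B(0, R-C\sqrt{\eps})$, which is the only thing that is not immediate but is mainly bookkeeping. Assuming $R > C\sqrt{\eps}$ (else the inclusion is vacuous), fix $x \in B(0, R-C\sqrt{\eps})$. If $\abs{x} \leq C_2\sqrt{\eps}$, then $\overline w(x) = C_1\sqrt{\eps} = C\sqrt{\eps} < R$. Otherwise, using the inequality $C_1 - C_2 \leq C_2$ from \cref{lem:cone},
\begin{align*}
	\overline w(x) = \abs{x} + \frac{C_2(C_1-C_2)\eps}{\abs{x}} < \abs{x} + (C_1-C_2)\sqrt{\eps} \leq (R - C\sqrt{\eps}) + (C_1-C_2)\sqrt{\eps} = R - C_2\sqrt{\eps} < R.
\end{align*}
In either case $w^*_\eps(x) \geq R - \overline w(x) > 0$ almost everywhere on $B(0,R-C\sqrt{\eps})$, so by the representative convention following \labelcref{eq:selection_operator} every point of this ball lies in $\{w^*_\eps > 0\}^1 = S_\eps(B(0,R))$, which is the desired inclusion.

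There is no single hard step here beyond \cref{lem:cone} itself; the only subtlety is making sure that the additive translation by $R$ legitimately interchanges the original problem with the negative-cone problem (which relies on the invariance of $\TV_\eps$ under constants) and that the choice $C = C_1$ is large enough to swallow the residual $(C_1-C_2)\sqrt{\eps}$ term coming from the $1/\abs{x}$ correction of $\overline w$.
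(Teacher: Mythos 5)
Your proof is correct and follows essentially the same route as the paper: reduce to the negative cone $-\abs{x-x_0}$ via the identity $\sdist(x,B(x_0,R)^c)=R-\abs{x-x_0}$ and the shift-invariance of $\TV_\eps$, invoke \cref{rem:supersolution_cone} to get $w_\eps^*\geq R-\overline w$, and then check that $\overline w\leq\abs{\cdot-x_0}+C_1\sqrt{\eps}$ so that $C=C_1$ works. The only difference is cosmetic: you verify $\overline w<R$ pointwise on the smaller ball, while the paper records the equivalent global bound $w_\eps^*(x)\geq R-C\sqrt{\eps}-\abs{x-x_0}$ and reads off the inclusion.
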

\begin{proof}
    We apply \cref{rem:supersolution_cone} to $d(x):=-\abs{x-x_0}$ and note that $\sdist(x,B(x_0,R)^c)=d(x)+R$ to infer that $w_\eps^*$ in the definition of $S_\eps(B(x_0,R))$ satisfies for almost all $x\in\Omega$ that
    \begin{align*}
        w_\eps^*(x) \geq R - C\sqrt{\eps} - \abs{x-x_0}.
    \end{align*}
    Here, $C>0$ is a constant depending on $C_1,C_2$ in the definition of $\overline w$ in \cref{lem:cone}.
    Hence, we obtain $S_\eps(B(x_0,R))=\{w_\eps^*>0\}\supset B(x_0,R-C\sqrt{\eps})$.
\end{proof}
We highlight the almost-regularity of $L^2$-minimizers with Lipschitz data as an application of the cone \cref{lem:cone}. 
Our proof of consistency basically relies on the same argument, allowing us to avoid Lipschitz regularity. 
We note that in the periodic setting with constant densities, one can use a simple comparison argument to show that minimizers of the functional in \labelcref{eqn:L2+TV} (just below) are Lipschitz regular; but the moment one destroys translational invariance of the problem, such regularity becomes much more challenging. 
See for instance \cite[Theorem 3.1]{etoGiga2} for the related $\TV$ problem with boundary conditions. 
Similar almost-Lipschitz regularity results for solutions of nonlocal problems can be found, for instance, in \cite{calder2022lipschitz,bungert2023uniform}.

\begin{corollary}[Almost-Lipschitz regularity.]\label{cor:almost-Lip}
Let $\Omega\subset \R^N$ be a bounded, open, and convex set. Suppose that $w\in L^\infty(\Omega)$ satisfies
\begin{equation}\label{eqn:L2+TV}
    w := \argmin 
    \left\lbrace 
    \frac{1}{2\eps}
    \int_\Omega \abs{u-f}^2\de\rho
    +\TV_\eps(u)
    \st 
    u \in L^2(\Omega)
    \right\rbrace,
\end{equation}
for a $1$-Lipschitz function $f\in C(\Omega)$. 
Then for almost all $x,x_0\in \Omega$ it holds that
\begin{equation}\label{eqn:almost Lip}
|w(x) - w(x_0)| \leq |x-x_0|+C\sqrt{\eps},
\end{equation}
where $C>0$ depends only on $\rho$, $\operatorname{diam}(\Omega)$, and the dimension $N.$
\end{corollary}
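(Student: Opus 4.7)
The plan is to apply \cref{lem:cone} and \cref{rem:supersolution_cone} together with the comparison principle in \cref{prop:comparison_one} to sandwich $w$ between two Lipschitz cones centered at an arbitrary fixed point of $\Omega$, and then convert the resulting pointwise estimate into the almost-Lipschitz bound via the triangle inequality.

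Fix $x_0 \in \Omega$ and, exploiting that $f$ is $1$-Lipschitz, define the envelopes
\[
g_\pm(z) := f(x_0) \pm |z - x_0|, \qquad z \in \Omega,
\]
so that $g_-(z) \leq f(z) \leq g_+(z)$ everywhere in $\Omega$. By \cref{prop:comparison_one}, the minimizers $w_\pm$ of \labelcref{eqn:L2+TV} with data $g_\pm$ satisfy $w_- \leq w \leq w_+$ almost everywhere. An inspection of \labelcref{eq:TV} shows that $\TV_\eps$ is invariant under the addition of constants, so $w_\pm = f(x_0) + w_{\mathrm{cone}}^\pm$, where $w_{\mathrm{cone}}^\pm$ is the minimizer for the pure cone data $z \mapsto \pm|z - x_0|$. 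Applying \cref{lem:cone} to the positive cone and \cref{rem:supersolution_cone} to the negative cone yields $\pm w_{\mathrm{cone}}^\pm \leq \overline{w}$ almost everywhere, and a short case analysis of the explicit formula for $\overline{w}$ gives
\[
\overline{w}(z) \leq |z - x_0| + C\sqrt{\eps} \qquad \text{for every } z \in \Omega,
\]
with $C$ depending only on the quantities listed in the corollary: in the inner regime $|z - x_0| \leq C_2\sqrt{\eps}$ one has $\overline{w}(z) = C_1\sqrt{\eps}$, while in the outer regime $|z - x_0| > C_2\sqrt{\eps}$ the correction satisfies $C_2(C_1-C_2)\eps/|z-x_0| \leq (C_1 - C_2)\sqrt{\eps}$. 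Combining these ingredients yields the pointwise sandwich
\[
|w(z) - f(x_0)| \leq |z - x_0| + C\sqrt{\eps} \qquad \text{for a.e.\ } z \in \Omega,
\]
valid for every fixed $x_0 \in \Omega$.

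To deduce \labelcref{eqn:almost Lip}, I would pass to the Lebesgue representative of $w$. Averaging the sandwich over $B_r(x_0) \cap \Omega$ and sending $r \to 0$ at a Lebesgue point $x_0$, together with continuity of $f$, gives $|w(x_0) - f(x_0)| \leq C\sqrt{\eps}$ for a.e.\ $x_0 \in \Omega$. For a.e.\ pair $(x, x_0) \in \Omega \times \Omega$, the triangle inequality then produces
\[
|w(x) - w(x_0)| \leq |w(x) - f(x_0)| + |f(x_0) - w(x_0)| \leq |x - x_0| + 2C\sqrt{\eps},
\]
which is \labelcref{eqn:almost Lip} after absorbing the factor $2$ into the constant. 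I do not expect a substantial obstacle beyond the two routine case-by-case bounds on $\overline{w}$ and the constant-shift invariance of $\TV_\eps$, both of which are direct.
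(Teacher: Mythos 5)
Your proposal is correct and follows essentially the same route as the paper's proof: comparison (\cref{prop:comparison_one}) against shifted cone data $f(x_0)\pm|\cdot-x_0|$, the sub-/supersolution bounds of \cref{lem:cone} and \cref{rem:supersolution_cone}, the elementary estimate $\overline w(\cdot)\leq|\cdot-x_0|+C\sqrt{\eps}$, and then the triangle inequality via $|w(x_0)-f(x_0)|\leq C\sqrt{\eps}$. Your Lebesgue-point argument just makes explicit the step the paper compresses into ``using this inequality twice (once with $x=x_0$)''.
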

\begin{proof}
Fix $x_0\in \Omega$. Let $w_{\rm shift}$ be the minimizer 
\begin{align*}
    w_{\rm shift} := \argmin 
    \left\lbrace 
    \frac{1}{2\eps}
    \int_\Omega \abs{u(x)-|x - x_0|}^2\de\rho(x)
    +\TV_\eps(u)
    \st 
    u \in L^2(\Omega)
    \right\rbrace,
\end{align*}
and $\overline w$ be the function defined in \cref{lem:cone}.
As $f$ is $1$-Lipschitz, we have $f(\cdot)\leq |\cdot - x_0|+f(x_0)$, and so by \cref{prop:comparison_one} applied to $w$ and $w'=w_{\rm shift} +f(x_0)$ and \cref{lem:cone} applied to $w_{\rm shift}$, we have $w\leq  w_{\rm shift} + f(x_0) \leq \overline w +f(x_0)$. Applying the same reasoning with supersolutions (using \cref{rem:supersolution_cone}) and noting that $\overline w(\cdot) \leq |\cdot - x_0| +C \sqrt{\eps}$ gives that
$$ |w(x) -f(x_0)| \leq |x - x_0| +C \sqrt{\eps}$$
for any $x\in \Omega$. Using this inequality twice (once with $x = x_0$), applying the triangle inequality, and increasing $C$ directly gives \labelcref{eqn:almost Lip}.
\end{proof}

Finally, we conclude the proof of \cref{thm:monotone and consistent} by showing that the operator is consistent.
  
\begin{proposition}[Consistency]\label{prop:consist}
     The operator $S_\eps$ defined in \labelcref{eq:selection_operator} is consistent in the sense of \cref{def:consist}.
\end{proposition}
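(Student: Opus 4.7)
The plan is to treat subflows (the superflow case following by a symmetric argument, swapping $\vee$/$\wedge$ and the roles of $\rho_0$/$\rho_1$) and to apply the variational comparison principle of \cref{prop:comparison_two} to an explicit smooth \emph{supersolution} $v$ built from the subflow's signed distance function. Fix a subflow $t\mapsto A(t)$ with parameter $\delta>0$, a time $t\in[t_0,t_1-\eps]$, write $d_s(x):=\sdist(x,A(s)^c)$, and let $w:=w_\eps^*$ denote the minimizer in \labelcref{eq:selection_operator} with data $d_t$. I will work on a tubular neighborhood $\Omega':=\{|d_t|<r_0\}$ of $\partial A(t)$, where $r_0>0$ is fixed (uniformly in $t\in[t_0,t_1]$ by compactness and the $C^{2,1}_{x,t}$-regularity of $d_s$ on $B$), ensuring $d_t\in C^2(\overline{\Omega'})$ with $|\nabla d_t|=1$. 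The goal is to construct $v\in C^2(\overline{\Omega'})$ satisfying (i) $v\leq d_{t+\eps}$ pointwise on $\Omega'$; (ii) the variational inequality $E(v;\Omega')\leq E(v\vee w;\Omega')$ required by \cref{prop:comparison_two}; and (iii) $v\geq w$ almost everywhere on the shell $\Omega'\setminus\Omega'_\eps$. Then \cref{prop:comparison_two} yields $v\geq w$ on $\Omega'$, hence $\{w>0\}\cap\Omega'\subset\{v>0\}\cap\Omega'\subset A(t+\eps)$.

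The natural candidate is
\begin{align*}
    v(x):=d_t(x)+\frac{\eps}{\rho(x)}\div\!\big(\rho(x)\nabla d_t(x)\big)+\frac{\eps\delta}{2},
\end{align*}
which has $|\nabla v|\geq 1/2$ for $\eps$ small, and property (i) follows from integrating the subflow inequality $\partial_s d_s\geq\tfrac{1}{\rho}\div(\rho\nabla d_s)+\delta$ over $s\in[t,t+\eps]$ (using the $C^{2,1}_{x,t}$-regularity of $d_s$), after shrinking the slack by an $O(\eps^2)$ amount. To verify (ii) I will apply \cref{prop:density_of_pushforward} on $\Omega'$ to obtain a ``subgradient'' $p_v$ of $\TV_\eps(\cdot;\Omega')$ at $v$, and \cref{prop:consistency} to identify $p_v=-\div(\rho\nabla v/|\nabla v|)+o_{\eps\to 0}(1)=-\div(\rho\nabla d_t)+o_{\eps\to 0}(1)$ uniformly on $\Omega'_{2\eps}$, using $\nabla v=\nabla d_t+O(\eps)$ and $|\nabla d_t|=1$. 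Combined with $v-d_t=\tfrac{\eps}{\rho}\div(\rho\nabla d_t)+\tfrac{\eps\delta}{2}$, this yields the key positivity
\begin{align*}
    \frac{(v-d_t)\rho}{\eps}+p_v=\frac{\delta\rho}{2}+o_{\eps\to 0}(1)\geq 0,
\end{align*}
which by the strategy outlined in the preamble to the proof of \cref{lem:cone} (testing against the non-negative perturbation $\varphi=v\vee w-v$) delivers the variational inequality needed by \cref{prop:comparison_two}.

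The hard part will be property (iii), since our minimizers are not known to be Lipschitz continuous. On the shell $|d_t|\geq r_0-\eps\geq r_0/2$, I will apply \cref{lem:cone} and \cref{rem:supersolution_cone} to cones centered at the nearest point of $\partial A(t)$, exploiting the $C^2$-regularity of the flow to fit such cones from both inside and outside $A(t)$, thereby obtaining the pointwise bound $|w-d_t|\leq C\eps/d_t$ with $C$ depending only on the data in \cref{ass:densities} and on $N$. Choosing $r_0$ large enough relative to $\delta$ and the flow-dependent bound on $\tfrac{1}{\rho}\div(\rho\nabla d_t)$ then ensures that the $\tfrac{\eps\delta}{2}$-slack in $v-d_t$ dominates the cone error on the shell, yielding $v\geq w$ there; for small $\delta$ a sharper barrier (matching the expected $\eps\cdot\tfrac{1}{\rho}\div(\rho\nabla d_t)$ behavior of $w-d_t$) may be required to close the gap. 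Finally, outside $\Omega'$, continuity of the subflow gives $\{d_t\geq r_0\}\subset A(t+\eps)$ uniformly in $t$, while a cone supersolution applied to balls fit inside $A(t)^c$ shows that $w\leq 0$ on $\{d_t\leq-r_0\}$ for $\eps$ small. Assembling the three regions and invoking compactness of $[t_0,t_1]$ yields $S_\eps(A(t))\subset A(t+\eps)$ uniformly in $t$, proving consistency.
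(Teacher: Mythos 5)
Your overall architecture matches the paper's: work in a tube around $\partial A(t)$, build a comparison function from the subflow, verify the variational inequality via \cref{prop:density_of_pushforward,prop:consistency}, invoke \cref{prop:comparison_two}, and use cone barriers from \cref{lem:cone} to control $w_\eps^*$ near the edge of the tube and outside it. However, your specific choice $v=d_t+\tfrac{\eps}{\rho}\div(\rho\nabla d_t)+\tfrac{\eps\delta}{2}$ creates a gap in property (iii) that you acknowledge but do not close, and which is in fact fatal for this candidate. On the shell $\tube\setminus\tube_{2\eps}$ you have $v-d_t=\eps\bigl(\tfrac{1}{\rho}\div(\rho\nabla d_t)+\tfrac{\delta}{2}\bigr)\geq -K\eps+\tfrac{\delta}{2}\eps$, where $K$ is a flow-dependent bound on the (possibly negative) weighted curvature term, while the only a priori upper bound on $w_\eps^*$ available is the cone bound $w_\eps^*\leq d_t+C\eps/r_0$. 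The required inequality $\tfrac{\delta}{2}\geq K+C/r_0$ fails whenever $\delta<2K$, and $\delta$ must be sent to $0$ in the proof of \cref{thm:main}, so no choice of $r_0$ rescues this. Your suggested ``sharper barrier matching the expected behavior of $w-d_t$'' is circular: that expected behavior is precisely the consistency statement being proved. The paper's resolution is to take $v_\eps=\psi(d(\cdot,t+\eps))$ for a smooth increasing $\psi$ with $\psi(s)=s$ near $0$ (so the zero level set is untouched), $\psi(3r/4)\geq\operatorname{diam}\Omega$, and $\psi\geq-3r/4$; this decouples the shell ordering from the interior inequality, since the inner shell is then handled by the trivial bound $w_\eps^*\leq\operatorname{diam}\Omega$ and the outer shell by the cone bound $w_\eps^*\leq-3r/4$.

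A secondary but genuine problem is regularity: \cref{prop:density_of_pushforward} requires $u\in C^2(\closure\Omega)$, but your $v$ already contains second derivatives of $d_t$ and first derivatives of $\rho$, and identifying $p_v=-\div(\rho\nabla v/|\nabla v|)+o(1)$ with $-\div(\rho\nabla d_t)+o(1)$ would need $\nabla(\tfrac{1}{\rho}\div(\rho\nabla d_t))$ bounded, i.e., $d\in C^3$ and $\rho\in C^2$ --- more than \cref{def:superflow} and \cref{ass:densities} provide. The composition $\psi(d(\cdot,t+\eps))$ avoids this entirely. Also, centering cones ``at the nearest point of $\partial A(t)$'' only yields a useful upper bound on $w_\eps^*$ at points \emph{inside} $A(t)$; for points outside you must center the cone at the evaluation point itself (as the paper does, getting the $O(\sqrt\eps)$ tip error) or at a reflected point beyond it, which requires a reach/curvature hypothesis you should make explicit.
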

\begin{proof}
The proof follows the strategy of \cite[Proposition 4.1]{chambolle2007approximation} with non-trivial modifications since the scheme \labelcref{eq:selection_operator} involves the nonlocal total variation instead of the local one. We show that  \cref{def:consist} is satisfied for subflows, with superflows being analogous.

\emph{Step 1 (Construction of an variational subsolution).} We let $[t_0,t_1]\ni t \mapsto A(t)$ be a subflow in the sense of \cref{def:superflow} contained in the neighborhood $B$. Recall that we define $d(x,t):=\sdist(x,A^c(t))$.

For fixed $t\in[t_0,t_1]$ and $r>0$ we define $$\tube := \{\abs{d(\cdot,t)}<r\}$$ to be the tube of width $r$ around the boundary $\partial A(t)$.
Here we choose $r$ sufficiently small such that $\closure{\tube}\subset B\cap(\R^N \times \{t\})$, so that $d(x,t+\tau)$ is in $C^{2,1}_{x,\tau}(\tube \times [-\eps,\eps])$ for all small $\eps$; note the choice of $r$ can be made independent of $t$ depending only on the smooth subflow.
Let $\psi:\R\to\R$ be smooth with $\psi(s)\geq s$, $\psi(s)=s$ for $s$ in a neighborhood of $0$, and $\psi'(s)\geq c>0$, and define $v_\eps(x) := \psi(d(x,t+\eps))$.
By \cref{def:superflow}, it holds that
\begin{align*}
    \frac{v_\eps(x)-d(x,t)}{\eps} 
    &\geq 
    \frac{d(x,t+\eps)-d(x,t)}{\eps}
    \\
    &=
    \intbar_0^\eps \frac{\de}{\de\tau}d(x,t+\tau)\de\tau
    \\
    &=
    \intbar_0^\eps \partial_t d(x,t+\tau)\de\tau
    \\
    &\geq 
    \intbar_0^\eps \frac{1}{\rho(x)}\div\left(\rho(x)\grad d(x,t+\tau)\right)\de\tau + \delta.
\end{align*}
Letting $\omega$ denote a modulus of continuity of $\frac{1}{\rho(x)}\div\left(\rho(x)\grad d(x,\tau)\right)$ in $\tau$ (uniformly in $x$) on $B$, we have
\begin{align}\label{eq:diff_ineq_1}
    \frac{v_\eps(x)-d(x,t)}{\eps} 
    \geq 
    \frac{1}{\rho(x)}\div\left(\rho(x)\grad d(x,t+\eps)\right) + \delta - \omega(\eps).
\end{align}
Note furthermore that $\grad v_\eps(x) = \psi'(d(x,t+\eps))\grad d(x,t+\eps)$.
On one hand this implies
\begin{align}\label{eq:gradient_bound_v}
    \abs{\nabla v_\eps}\geq c \quad \text{ in } \tube,
\end{align}
which will be useful later.
On the other hand, we see that
\begin{align*}
    \frac{\grad v_\eps(x)}{\abs{\grad v_\eps(x)}} = \frac{\grad d(x,t+\eps)}{\abs{\grad d(x,t+\eps)}} = \grad d(x,t+\eps).  
\end{align*}
Using this and reordering \labelcref{eq:diff_ineq_1} we get
\begin{align}\label{eq:diff_ineq_v_continuum}
    \frac{v_\eps(x)-d(x,t)}{\eps} 
    \rho(x)
    -
    \div\left(\rho(x)\frac{\grad v_\eps(x)}{\abs{\grad v_\eps(x)}}\right) 
    -\rho(x)\left(\delta - \omega(\eps)\right)
    \geq 0.
\end{align}
Let $\varphi \in L^\infty(\tube)$ be a non-negative test function with $\supp\varphi\subset \tube_{2\eps}$.
Let us also define the energy
\begin{align*}
    E_\eps(u;\tube) := \frac{1}{2\eps}\int_{\tube}\abs{u-d(\cdot,t)}^2\de\rho + \TV_\eps(u;\tube),
\end{align*}
where $\TV_\eps(u;\tube)$ denotes the total variation \labelcref{eq:TV} with $\Omega$ replaced by $\tube$. We let 
\begin{align*}
    V_\eps(y):=\frac{\de}{\de \mathcal{L}^N}\left[\frac{(\Gamma_\eps)_\sharp\rho_0 - \rho_0}{\eps} + \frac{\rho_1-(\gamma_\eps)_\sharp\rho_1}{\eps}\right]
\end{align*}
be the density of the pushforward, where the right-hand side is defined as in \cref{prop:density_of_pushforward} with $u$ replaced by $v_\eps$, which is admissible due to \labelcref{eq:gradient_bound_v}.
Multiplying \labelcref{eq:diff_ineq_v_continuum} by $\varphi$, using its non-negativity, and integrating over $\tube$ yields
\begin{align*}
    E_\eps(v_\eps;\tube)
    &\leq 
    E_\eps(v_\eps;\tube)
    +
    \int_{\tube}
    \left(
    \frac{v_\eps(x)-d(x,t)}{\eps} 
    \rho(x)
    -
    \div\left(\rho(x)\frac{\grad v_\eps(x)}{\abs{\grad v_\eps(s)}}\right) 
    \right)
    \varphi(x)
    \de x
    \\
    & \qquad  -
    (\delta-\omega(\eps))
    \int_{\tube} \varphi(x)\de\rho(x)
    \\
    &=
    E_\eps(v_\eps;\tube)
    +
    \int_{\tube}
    \left(
    \frac{v_\eps(x)-d(x,t)}{\eps} 
    \rho(x)
    +
    V_\eps(x)
    \right)
    \varphi(x)
    \de x
    \\
    &\qquad
    -
    \int_{\tube}
    \left(
    V_\eps(x)
    +
    \div\left(\rho(x)\frac{\grad v_\eps(x)}{\abs{\grad v_\eps(s)}}\right) 
    \right)
    \varphi(x)\de x
    \\
    &\qquad
    -
    (\delta-\omega(\eps))
    \int_{\tube} \varphi(x)\de\rho(x)
    \\
    &\leq 
    E_\eps(v_\eps + \varphi;\tube)
    +
    (\omega(\eps)+o_{\eps\to 0}(1) - \delta)
    \int_{\tube} \varphi(x)\de\rho(x),
\end{align*}
where in the last step, we completed a square, used \cref{prop:density_of_pushforward,prop:consistency} on $\tube$ together with the gradient bound \labelcref{eq:gradient_bound_v} for $v_\eps$ and the fact that $\supp\varphi\subset\tube_{2\eps}$.
Since $\delta>0$ we can choose $\eps>0$ sufficiently small such that the second term is non-positive which implies 
\begin{equation}\label{eqn:supersoln}
E_\eps(v_\eps;\Omega')\leq E_\eps(v_\eps+\varphi;\Omega') 
\end{equation} f
or $\eps>0$ sufficiently small.

\emph{Step 2 (Conclusion, assuming ordered boundary values).} Supposing that
\begin{equation}\label{ass:orderedBC}
v_\eps \geq w_\eps^*  \quad \text{ on }\tube\setminus \tube_{2\eps},
\end{equation}
the non-negative test function $\varphi_\eps := v_\eps\vee w_\eps^* - v_\eps$ (where $w_\eps^*$ solves the scheme \labelcref{eq:selection_operator}) can be inserted into \labelcref{eqn:supersoln}.
Consequently, we have that $E_\eps(v_\eps)\leq E_\eps(v_\eps\vee w_\eps^*)$, and \cref{prop:comparison_two} implies that $v_\eps\geq w_\eps^*$ on $\tube$ (one must technically deal with null-sets, but this may be done as in the proof of \cref{prop:monotone}), and hence, we find
\begin{align*}
    S_\eps(A(t)) \cap \tube =\{w_\eps^\ast > 0\}\cap \tube\subset\{v_\eps > 0\}\cap \tube=\{d(\cdot,t+\eps)> 0\}\cap \tube = A(t+\eps)\cap \tube.
\end{align*}
 Similarly, we will see in the next step,
\begin{equation}\label{eqn:sign estimate}
\{w_\eps^\ast > 0\}\setminus \tube \subset \{d(\cdot,t)> 0\}\setminus \tube.
\end{equation}
Further, outside of the set $\tube$, for sufficiently small $\eps$ (depending only on the smooth subflow), we have $A(t)\setminus \tube = A(t+\eps)\setminus \tube$.
Putting these last two pieces together, we recover
$$ S_\eps(A(t)) \setminus \tube =\{w_\eps^\ast > 0\}\setminus \tube \subset \{d(\cdot,t)> 0\}\setminus \tube = A(t+\eps)\setminus \tube.$$
Uniting the subset relations for $S_\eps(A(t))$ concludes the proof.
We now turn to the proof of \labelcref{ass:orderedBC,eqn:sign estimate}.

\emph{Step 3 (Ordered boundary values).}
To prove \labelcref{ass:orderedBC}, we have to pick a suitable function $\psi$ in the definition of $v_\eps = \psi(d(\cdot,t+\eps))$.
So far we have only used that $\psi(s)\geq s$, $\psi(s)=s$ in a neighborhood of $0$, and that $\psi' \geq c>0$. 

First, we argue that one can find $\psi$ such that $v_\eps\geq w_\eps^*$ in the part of the $2\eps$-neighborhood of the boundary of $\tube$ that lies inside of $A(t)$:
For all $\eps>0$ small enough and $x\in\partial \tube$ with $d(x,t)=r$, it holds that $d(x,t+\eps)\geq \frac{7 r}{8}$ since $d$ is uniformly continuous in time.
Since $d$ is also uniformly continuous in space, we get $d(x,t+\eps)\geq\frac{3r}{4}$ for all $x$ in $(\tube \setminus \tube_{2\eps}) \cap A(t)$ if $\eps>0$ is sufficiently small.
On the other hand, by \cref{prop:existence_and_Lipschitz} it holds that $w_\eps^*\leq\norm{d(\cdot,t)}_{L^\infty(\Omega)}\leq\operatorname{diam}\Omega$. 
So if we choose $\psi$ such that $\psi(\frac{3r}{4})\geq \operatorname{diam}\Omega$, we get 
\begin{align*}
    v_\eps(x) = \psi(d(x,t+\eps))\geq \psi\left(\frac{3r}{4}\right)\geq \operatorname{diam}\Omega \geq w_\eps^*(x)
\end{align*}
for all $x\in (\tube \setminus \tube_{2\eps}) \cap A(t)$.

Next, we argue that also in the $2\eps$-neighborhood of the exterior part of the boundary of $\tube$ one can find an appropriate $\psi$ such that $v_\eps\geq w_\eps^*$:
The argument for this is more involved than for the inner part since in principle $w_\eps^*$ could be arbitrarily close to zero outside of $A(t)$ whereas the signed distance function $d(\cdot,t+\eps)$ in the definition of $v_\eps$ might be substantially negative. If this happened, to obtain $v_\eps\geq w_\eps^*$ we could be forced to take $\psi(-3r/4) = 0$ breaking the constraint $\psi' \geq c$.

Instead, fix a point $x\in\partial \tube$ such that $d(x,t)=-r$.
Since the distance function is $1$-Lipschitz 
\begin{align*}
    d(\cdot,t) \leq \abs{\cdot-x} - r,
\end{align*}
and therefore \cref{prop:comparison_one} ensures that $w_\eps^*\leq w_\eps$ in $\Omega$ where 
\begin{align*}
    w_\eps := \argmin_{u\in L^2(\Omega)}\frac{1}{2\eps}\int_\Omega\abs{u-(\abs{\cdot-x} - r)}^2\de\rho+\TV_\eps(u).
\end{align*}
However, by \cref{lem:cone}, $w_\eps \leq \overline w - r$, and it follows that $w_\eps^* \leq -r + C_1\sqrt{\eps}$ for all $|x' - x|\leq C_2\sqrt{\eps}$. Noting that this reasoning can be uniformly applied at all points $x\in\partial \tube$ with $d(x,t)=-r$, we see that for sufficiently small $\eps>0$, 
\begin{equation}\label{eqn:almost sign estimate}
 w_\eps^* \leq -\frac{3}{4}r \quad \text{ on } \quad (\tube \setminus \tube_{2\eps}) \setminus A(t).
 \end{equation}
Restricting $\psi$ to satisfy $\psi(t)\geq - \frac{3r}{4}$ for any $t\geq -\operatorname{diam}\Omega$ we therefore get
\begin{align*}
    v_\eps(x) = \psi(d(x,t+\eps)) \geq - \frac{3r}{4}
    \geq w_\eps^*(x)
\end{align*}
for all $x\in (\tube \setminus \tube_{2\eps}) \setminus A(t)$.
Hence, we have shown \labelcref{ass:orderedBC}.
The same reasoning used to obtain \labelcref{eqn:almost sign estimate}, but now at a point for which $d(x,t)\leq-r$, shows
$$\{w_\eps^\ast > 0\}\setminus (\tube\cup A(t)) \subset \{d(\cdot,t)> 0\}\setminus (\tube\cup A(t)) =\emptyset $$
which directly gives \labelcref{eqn:sign estimate}, completing the proof.

Note we have proven that there exists an $\eps_0>0$ sufficiently small such that consistency in \cref{def:consist} is satisfied at a given time $t$ for all $\eps <\eps_0$, but actually, our estimate for $\eps_0$ is uniform in $t\in [t_0,t_1]$.
\end{proof}

\section*{Acknowledgments}

The authors would like to thank Antonin Chambolle for fruitful discussions around the construction subsolutions for cone data which happened during the Oberwolfach workshop 2349 ``Variational Methods for Evolution''.
Parts of this work were done when LB was affiliated with the Technical University of Berlin, supported by Germany’s Excellence Strategy – The Berlin Mathematics Research Center MATH+ (EXC-2046/1, project ID: 390685689).
The authors were affiliated with the Hausdorff Center for Mathematics during parts of this project and the funding from the Deutsche Forschungsgemeinschaft (DFG, German Research Foundation) 
under Germany's Excellence Strategy -- EXC-2047/1 -- 390685813 is greatly appreciated. KS was also supported by the DFG project 211504053 - SFB 1060.

\printbibliography

\end{document}